\numberwithin{equation}{section}
\newcommand*{\SetSuchThat}[1][]{} 
\newcommand*{\MvertSets}{%
	\renewcommand*\SetSuchThat[1][]{%
		\mathclose{}%
		\nonscript\;##1\vert\penalty\relpenalty\nonscript\;%
		\mathopen{}%
	}%
}
\DeclarePairedDelimiterX \Set [2] {\lbrace}{\rbrace}
{\,#1\SetSuchThat[\delimsize]#2\,}
\def\({\left(}
\def\){\right)}
\def\<{\langle}
\def\>{\rangle}
\newcommand{\=}{\mathrel{\phantom{=}}}
\DeclareMathOperator{\wt}{wt}
\DeclareMathOperator{\cover}{:\!\supset}
\DeclareMathOperator{\coveredby}{\subset\!:}
\def\AJ{{A\mathrm{J}}}
\def\BJ{{B\mathrm{J}}}
\def\AM{{A\mathrm{M}}}
\def\BM{{B\mathrm{M}}}
\def\monic{{\mathrm{monic}}}
\def\inte{{\mathrm{int}}}
\def\LR{{\mathrm{LR}}}
\def\fp{\mathbb F_{\geqslant0}}
\def\fpp{\mathbb F_{>0}}
\newtheorem{theorem}{Theorem}[section]
\newtheorem*{theorem*}{Theorem}
\newaliascnt{lemma}{theorem}
\newtheorem{lemma}[lemma]{Lemma}
\newaliascnt{proposition}{theorem}
\newtheorem{proposition}[proposition]{Proposition}
\newaliascnt{corollary}{theorem}
\newtheorem{corollary}[corollary]{Corollary}
\newtheorem{conjecture}{Conjecture}
\newtheorem{thmx}{Theorem}
\newtheorem{remark}{Remark}
\newtheorem{definition}{Definition}
\let\oldproofname=\proofname
\renewcommand{\proofname}{\rm\bf{\oldproofname}}
\Crefname{thmx}{Theorem}{Theorems}
\Crefname{conjecture}{Conjecture}{Conjectures}
\begin{document}
\title{Interpolation Polynomials, Binomial Coefficients, and Symmetric Function Inequalities}

\author[addr1]{Hong Chen}
\ead{hc813@math.rutgers.edu}

\author[addr1]{Siddhartha Sahi}
\ead{sahi@math.rutgers.edu}

\address[addr1]{Department of Mathematics, Rutgers University, 110 Frelinghuysen Rd, Piscataway, NJ 08854, USA}

\begin{abstract} 
Interpolation polynomials were introduced by Knop--Sahi in type $A$, and Okounkov in type $BC$. They are inhomogeneous polynomials whose top terms are Jack and Macdonald polynomials. Thus the expansion coefficients for the product of two interpolation polynomials, known as Littlewood--Richardson coefficients, generalize the corresponding coefficients for Jack/Macdonald polynomials. Special values of interpolation polynomials, known as binomial coefficients, arise in the binomial type expansions of Jack/Macdonald polynomials and Koornwinder polynomials.   

We prove a number of results for interpolation polynomials and the associated coefficients. These include positivity and monotonicity results for binomial coefficients, partial positivity results for Littlewood--Richardson coefficients, and weighted sum formulas for both kinds of coefficients.

As a special case of our results we obtain a new symmetric function inequality, which establishes a ``duality'' between Jack expansion positivity for symmetric functions, and the containment order on partitions, with respect to the shifted basis $\Omega_\lambda( \bm1+x;\tau)$, where $\bm1 =(1,\ldots,1)$ and $\Omega_\lambda(x;\tau)= P_\lambda(x;\tau)/P_\lambda(\bm1;\tau)$ is the normalized Jack polynomial. 

Our inequality can be seen as an analog of the inequalities of Cuttler--Greene--Skandera+Sra and Khare--Tao, which establish similar dualities between  evaluation positivity on the positive orthant, and the dominance and weak dominance orders on partitions, with respect to the normalized Schur basis $\Omega_\lambda(x)=s_\lambda(x)/s_\lambda(\bm1)$ and its shifted version $\Omega_\lambda(\bm1+x)$, respectively. In contrast to our result, the Jack versions of the two latter inequalities, although expected to hold, have not yet been proved. 
	
\end{abstract}
\begin{keyword}
Schur polynomials \sep Jack polynomials \sep Macdonald polynomials \sep interpolation polynomials \sep binomial coefficients \sep symmetric function inequalities
\end{keyword}

\maketitle
{
	\hypersetup{linkcolor=black}
	\tableofcontents
}

\section{Introduction}\label{sec:intro}
Symmetric functions play a central role in algebraic combinatorics and representation theory. The ring of symmetric polynomials in $n$ variables admits several distinguished homogenous bases, each indexed by the set $\mathcal P_n$ of partitions $\lambda$ with $\leqslant n$ parts, which encode a rich algebraic and combinatorial structure. Two of the most important bases are the two-parameter Macdonald polynomials $P_\lambda(x;q,t)$ and their one-parameter specializations, the Jack polynomials $P_\lambda(x;\tau)$, which connect to mathematical physics, integrable systems, and special function theory. The Jack polynomials further specialize to many classic examples such as the monomial and elementary symmetric polynomials, as well as the zonal and Schur polynomials, which arise naturally in the representation theory of the symmetric and general linear groups, and their generalizations. 

Macdonald and Jack polynomials admit two natural inhomogeneous analogs, known as interpolation polynomials of type $A$ and type $BC$, which are characterized by certain symmetry and vanishing properties. The type $A$ polynomials were introduced by Knop and Sahi in \cite{Knop97,KS96, Sahi94, Sahi96}, while the type $BC$ polynomials, which are Laurent polynomials, were introduced by Okounkov \cite{Oko98-BC}. 

Interpolation polynomials play two different roles in the homogeneous theory. First, their top degree terms are Jack and Macdonald polynomials, thus their Littlewood--Richardson coefficients (structure constants) generalize the homogeneous coefficients. Second, certain special values of interpolation polynomials, known as (generalized) binomial coefficients, arise in the binomial type expansion of Jack/Macdonald polynomials and Koornwinder polynomials \cite{Oko97, Oko98-BC, Oko98-Mac, OO97}.  

In this paper we prove several properties of interpolation polynomials and the associated coefficients. As an application we obtain a symmetric function inequality for the containment order on partitions, which can be regarded as an analog of the inequalities of Cuttler--Green--Skandera+Sra \cite{CGS11, Sra16} and Khare--Tao \cite{KT21} for the dominance and weak dominance orders, respectively.
We discuss this application first since its formulation requires fewer prerequisites.

\subsection{Symmetric Function Inequalities and Duality}
Inequalities involving symmetric function are of broad interest. These often have the following general form: we are given a basis $(b_\lambda(x))_\lambda$ of symmetric polynomials and two partial orders $\succeq_1$ and  $\succeq_2$ on symmetric polynomials and partitions, respectively, and then the inequality asserts
\begin{equation}\label{ineq}
	b_\lambda\succeq_1b_\mu \text{ if and only if } \lambda \succeq_2 \mu.
\end{equation}
 
\begin{definition}\label{def:duality}
If \eqref{ineq} holds we will say that $\succeq_1$ and $\succeq_2$ are \textbf{dual} for the basis $(b_\lambda(x))_\lambda$.
\end{definition}

Of course, given a basis and one of the two partial orders, one can simply \emph{define} the other partial order so that \eqref{ineq} holds, but in general this will not lead to anything particularly interesting. The most significant examples are those in which both partial orders are interesting in their own right, and for which the duality seems to hold with respect to \emph{many} different natural bases. 

There are two important examples of duality, which were proved relatively recently.
In both cases the partial order on functions is defined by \textbf{evaluation positivity}, where $p(x)\succeq_1 0$ means $p(x)\geqslant 0$ whenever all $x_i\geqslant0$; and $p(x)\succeq_1 q(x)$ means $p(x)-q(x)\succeq_1 0$. The partial orders on partitions in the two cases are \textbf{dominance} and \textbf{weak dominance}, also known as \textbf{majorization} and \textbf{weak majorization}. In the case of dominance, $\lambda \succeq_2 \mu$ means $\sum_{i=1}^k\lambda_i \geqslant \sum_{i=1}^k \mu_i $ for all $k$, with equality for $k=n$; for weak dominance one simply omits the last equality requirement.

It turns out that evaluation positivity is dual to \emph{both} dominance and weak dominance; the first duality holds with respect to the normalized Schur basis $\Omega_\lambda(x) = \frac{s_\lambda(x)}{s_\lambda(\bm 1)};\; \bm 1=(1,1,...,1)$, while the second duality holds for the shifted basis $\Omega_\lambda(\bm1+x)$. The first result was conjectured by Cuttler--Green--Skandera \cite{CGS11} and proved by Sra \cite{Sra16}; the second is due to Khare--Tao \cite{KT21}.

We note that evaluation/dominance duality also holds for \emph{several} normalized bases. The case of the monomial basis is the classical Muirhead inequality \cite{Muirhead}, which is a far-reaching generalization of the AM--GM inequality. Moreover, as shown in \cite{CGS11}, this duality also holds for the bases of power sums, elementary, and complete symmetric polynomials, and these results generalize many classical inequalities due to Maclaurin, Newton, Schl\"{o}milch, Gantmacher, Popoviciu and Schur.

It is natural to wonder \emph{why} the first duality holds for so many different bases. A partial answer is that many, though not all, of these bases are special cases of Jack polynomials, and presumably those results are special cases of a duality involving Jack polynomials, or more generally for the Jacobi polynomials of Heckman--Opdam \cite[Conjecture 4.7]{MN22}. We discuss this further in \cref{sec:app} below, where we also formulate a conjectural analog of the second duality for Jack polynomials. In a companion paper \cite{CKS} we formulate a more general duality conjecture for Macdonald polynomials, and prove it in the special case of 2 variables.

\medskip

We now formulate our new duality result, which is parallel to those mentioned above. Indeed, the set of partitions admits a third important partial order, the \textbf{containment} order, for which $\lambda \succeq_2 \mu$ means $\lambda_i \geqslant \mu_i$ for all $i$, and it is natural to ask whether this too admits a dual notion of positivity. This turns out to be true and the condition is \textbf{expansion positivity}, wherein $p(x)\succeq_1 0$ means that $p(x)$ has a positive expansion in some given basis, e.g. Schur functions. Indeed we show that Schur expansion positivity and containment are dual with respect to the basis $\Omega_\lambda({\bf 1}+x)$. In fact we prove a more general result involving Jack polynomials, which we now describe.

We recall that Jack polynomials $P_\lambda(x;\tau)$ have coefficients in the field $\mathbb Q(\tau)$.  We write $\Omega_\lambda(x;\tau) = {\frac{P_\lambda(x;\tau)}{P_\lambda({\bf 1};\tau)}}$ for the normalized Jack polynomial, and we say a symmetric polynomial is \emph{Jack positive} if it is a combination of $\Omega_\lambda(x;\tau)$ with coefficients in the cone $\mathbb Q(\tau)_{\geqslant 0}$ consisting of rational functions of the form $f(\tau)/{g(\tau)}$ for some polynomials $f,g$ with positive coefficients.
 Many classical polynomials (see \cite{Mac15}) arise by specializing $\tau=\tau_0$ for some real number $\tau_0$. These include the monomial, elementary, and Schur functions, as well as zonal polynomials over $\mathbb{R}$ and $\mathbb{H}$. We say a symmetric polynomial is \emph{$\tau_0$-Jack positive} if it is a combination of $\Omega_\lambda(x;\tau_0)$ with coefficients in $\mathbb R_{\geqslant0}.$
 
\begin{thmx}\label{thm:contain} 
Jack positivity and containment are dual with respect to $\Omega_\lambda({\bf1}+x;\tau)$. Furthermore, for each $\tau_0$ in $[0,\infty]$, $\tau_0$-Jack positivity and containment are dual with respect to $\Omega_\lambda({\bf1}+x;\tau_0)$. 
\end{thmx}

To the best of our knowledge, even the special cases of Theorem \ref{thm:contain} involving monomial, elementary, Schur, and zonal polynomials are new.
Interestingly, this duality also holds for normalized power sums (see \cref{thm:contain-powersum}), although they are not special cases of Jack polynomials.
\medskip

We will deduce Theorem \ref{thm:contain} as a special case of more general results on binomial coefficients and Littlewood--Richardson coefficients for interpolation polynomials that we now describe.

\subsection{Interpolation Polynomials and Binomial Coefficients}
Newton's binomial formula states that for any non-negative integer $n$ (in fact, for any real number $n$ and $-1<t<1$)
\begin{align}
	(t+1)^n = \sum_{m=0}^{\infty} \binom{n}{m} t^m.
\end{align}

Symmetric polynomial analogs of this binomial formula have been studied:
Bingham \cite{Bi74} studied the expansion of the zonal polynomial $C_\lambda(x+\bm1)$ in $(C_\mu(x))$ and Lascoux \cite{Lascoux} (see also \cite[P.47 Example 10]{Mac15}) the expansion of the Schur polynomial $s_\lambda(x+\bm1)$ in $(s_\mu(x))$, for applications in multivariate statistics and algebraic topology, respectively.
Later, in the 1990s, binomial formulas for Jack polynomials and Macdonald polynomials (and their non-symmetric counterparts) were studied by Lassalle, Kaneko, Sahi, Okounkov, Olshanski and others, in \cite{Las90,Kan93,OO-schur,OO97,Oko98-Mac,Sahi98}. See also \cite{Koo15} for a well-written survey.

Okounkov--Olshanski \cite{OO97} showed for Jack polynomials that 
\begin{align}\label{eqn:bino-Jack}
	\Omega_\lambda(x+\bm1;\tau)
	= \sum_\nu \binom{\lambda}{\nu}_\tau \Omega_\nu(x;\tau),
\end{align}
where the \textbf{generalized binomial coefficients} $\binom{\lambda}{\nu}_\tau$ are given by evaluating \textbf{interpolation Jack polynomials}, which are \emph{inhomogeneous} symmetric polynomials. 

In this paper, we study four families of interpolation polynomials, including type $A$ interpolation Jack and Macdonald polynomials due to Knop--Sahi \cite{Sahi94,KS96,OO97,Knop97,Oko97,Oko98-Mac}, and the type $BC$ analogs due to Okounkov \cite{Oko98-BC,Rains05}. 
Denote by $\AJ$ and $\AM$ the type $A$ interpolation Jack and Macdonald polynomials, respectively, and similarly $\BJ,\BM$ for their type $BC$ analogs. 
Hereafter in the paper, referred to as \textbf{the four families} of interpolation polynomials are the families $\AJ,\AM,\BJ$ and $\BM$.
Each family depends on certain parameters, and the base field $\mathbb F$ is the field of rational polynomials in the parameters. 
For each family, we also define convex cones $\fp$ and $\fpp$ in the field $\mathbb F$, which we call the \textbf{cone of (strict) positivity} (see \cref{sec:pre-notation}). 

The four families of interpolation polynomials, denoted by $(h_\mu^\AJ)$, $(h_\mu^\AM)$, $(h_\mu^\BJ)$ and $(h_\mu^\BM)$, can be uniformly defined by the following the interpolation condition and degree condition:
\begin{gather*}
	h_\mu(\overline \lambda) = \delta_{\lambda\mu}, \quad\forall \lambda\in\mathcal P_n,\ |\lambda|\leqslant|\mu|, \\
	\deg (h_\mu)=|\mu|.
\end{gather*}
where $\delta_{\lambda\mu}$ is the Kronecker delta function and $\overline{(\cdot)}:\mathcal P_n\to\mathbb F^n$ is a certain ``shifting'' function that depends on the family in question (see \cref{sec:pre-notation} below).

It is a surprising fact, called the \emph{extra vanishing property}, that the interpolation polynomial $h_\mu$ vanishes at more points than required in the definition:
\begin{align}
	h_\mu(\overline\lambda)=0\text{,\quad unless\quad}\lambda\supseteq\mu.
\end{align}

In this paper, we study the \textbf{generalized binomial coefficients} for the four families of interpolation polynomials, given by 
\begin{align}
	b_{\lambda\mu}\coloneqq \binom{\lambda}{\mu}\coloneqq h_\mu(\overline\lambda).
\end{align}
For the family $\AJ$ (type $A$ interpolation Jack polynomials), the corresponding binomial coefficients are precisely those in Okounkov--Olshanski's binomial formula \cref{eqn:bino-Jack}.

We prove that the binomial coefficients for the four families are monotone.
\begin{thmx}[Monotonicity]\label{thm:b-mono}
	If $\lambda\supseteq\mu$, then $\binom{\lambda}{\nu}-\binom{\mu}{\nu}$ lies in $\fp$.
	If, in addition, $\lambda\neq\mu$ and $\lambda\supseteq\nu\neq\bm0=(0,\dots,0)$
	then $\binom{\lambda}{\nu}-\binom{\mu}{\nu}$ lies in $\fpp$.
\end{thmx}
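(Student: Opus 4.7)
The plan is to prove monotonicity by induction on $|\lambda|-|\mu|$, reducing to the case of a single covering step and then attacking the covering case via the weighted sum formula of \cref{thm:A} together with the positivity of adjacent coefficients.

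First I would carry out the reduction. The base case $\lambda=\mu$ is immediate. For the inductive step with $\lambda\supsetneq\mu$, choose a partition $\kappa$ with $\lambda\supseteq\kappa\cover\mu$; such a $\kappa$ exists because any addable corner of $\mu$ lying inside $\lambda$ produces one. The inductive hypothesis applied to the pair $(\lambda,\kappa)$ gives $b_{\lambda\nu}-b_{\kappa\nu}\in\fp$, so it suffices to prove the covering statement $b_{\kappa\nu}-b_{\mu\nu}\in\fp$ whenever $\kappa\cover\mu$.

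For the covering step I would expand $b_{\kappa\nu}$ via \cref{thm:A} and partition the chains in $\mathfrak C_{\kappa\nu}$ according to their first step. Chains whose first step is $\kappa\cover\mu$ correspond bijectively to chains in $\mathfrak C_{\mu\nu}$ by dropping the leading edge; the key computation is to verify, by manipulating the explicit weight and the prefactor $a_{\kappa\mu}$, that their total contribution matches $b_{\mu\nu}$ (up to a factor in $\fp$). The remaining chains, whose first step is $\kappa\cover\kappa'$ with $\kappa'\neq\mu$, contribute weights and products of adjacent coefficients each lying in $\fp$ by \cref{prop:adj-pos} together with the positivity bookkeeping of \cref{thm:b-positivity}. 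The difference $b_{\kappa\nu}-b_{\mu\nu}$ is then exhibited as a sum of elements of $\fp$.

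For the strict statement with $\lambda\neq\mu$ and $\bm0\neq\nu\subseteq\lambda$, I would split on whether $\mu\supseteq\nu$. If $\mu\not\supseteq\nu$, the extra vanishing forces $b_{\mu\nu}=0$ while $b_{\lambda\nu}\in\fpp$ by \cref{thm:b-positivity}, giving the claim immediately. Otherwise $\mu\supseteq\nu\neq\bm0$, and I would locate a step $\kappa\cover\mu'$ in the chain from $\lambda$ to $\mu$ at which a non-matched chain contributes strictly positively; the assumption $\nu\neq\bm0$ guarantees that chains from $\kappa$ to $\nu$ have positive length and can branch at corners distinct from the one separating $\kappa$ from $\mu'$, so at least one such contribution lies in $\fpp$.

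The main obstacle will be the weight bookkeeping in the covering step. Since $\wt(\bm\zeta)$ depends on the entire chain through a product of combinatorial factors, verifying that the $\mu$-prefixed chains cleanly recover $b_{\mu\nu}$ (while the remaining chains yield only $\fp$-valued terms) requires careful manipulation of the explicit weight formula, closely paralleling the argument used to establish \cref{thm:b-positivity}.
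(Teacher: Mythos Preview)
Your reduction to the covering case $\kappa\cover\mu$ is fine and matches the paper. The gap is in the covering step itself: the claim that the chains in $\mathfrak C_{\kappa\nu}$ whose first step is $\kappa\cover\mu$ ``match $b_{\mu\nu}$ up to a factor in $\fp$'' does not hold. If $\bm\zeta=(\kappa,\mu,\bm\zeta_2,\dots,\bm\zeta_k=\nu)$ and $\bm\zeta'=(\mu,\bm\zeta_2,\dots,\bm\zeta_k)$, then
\[
\frac{\wt(\bm\zeta)}{\wt(\bm\zeta')}=\prod_{i=1}^{k-1}\frac{\norm{\overline\mu}-\norm{\overline{\bm\zeta_{i+1}}}}{\norm{\overline\kappa}-\norm{\overline{\bm\zeta_{i+1}}}},
\]
which outside the $\AJ$ case depends on the chain, so there is no single factor to pull out. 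Even in the $\AJ$ case the ratio equals $1/k$, so the $\mu$-prefixed block contributes $\dfrac{a_{\kappa\mu}}{k}\,b_{\mu\nu}$, and $a_{\kappa\mu}/k$ is \emph{not} bounded below by $1$. For instance with $n=2$, $\kappa=(2,1)$, $\mu=(1,1)$, $\nu=(1,0)$ one has $k=2$ and $a_{\kappa\mu}=\dfrac{2+\tau}{1+\tau}$, so $a_{\kappa\mu}/k-1=\dfrac{-\tau}{2(1+\tau)}\notin\fp$. The ``other'' chains must therefore compensate a genuinely negative term, and nothing in your outline explains why they do; the positivity of each remaining summand is not enough.

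The paper does not attempt to prove monotonicity from the weighted sum formula at all. Instead it works directly with Okounkov's tableau formulas \cref{eqn:AJ-comb,eqn:BJ-comb,eqn:AM-comb,eqn:BM-comb} for $h_\nu^\monic(\overline\lambda)$. A key lemma (\cref{lem:J-nonneg} and its Macdonald analogue) shows that for each reverse tableau $T$ the product $\prod_{s\in\nu}(\lambda_{T(s)}-a'_\nu(s)+l'_\nu(s)\tau)$ is either identically zero or has every integer $\lambda_{T(s)}-a'_\nu(s)$ strictly positive. One then subtracts the expression for $\mu$ from that for $\lambda$ tableau by tableau: only the factors with $T(s)=i_0$ change (where $\lambda/\mu$ sits in row $i_0$), and on the surviving tableaux each such factor strictly increases, so the difference is a manifestly positive sum. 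This argument is orthogonal to the chain combinatorics you propose and does not seem recoverable from the weighted sum formula alone.
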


\cref{thm:contain} is a direct corollary of \cref{thm:b-mono} and \cref{eqn:bino-Jack}.
Another consequence of \cref{thm:b-mono} is the positivity of binomial coefficients; namely taking $\lambda\supseteq\mu=\nu$ and using $\binom{\mu}{\mu}=1$, we see that the binomial coefficient $\binom{\lambda}{\mu}$ is greater than 1, and hence, positive.
By the extra vanishing property, $\binom{\lambda}{\mu}$ is 0 otherwise, and thus we conclude:
\begin{thmx}[Positivity]\label{thm:b-positivity}
	The binomial coefficient $\binom{\lambda}{\mu}\in\fpp$ if and only if $\lambda\supseteq\mu$.
\end{thmx}
In the case of  $\AJ$ and $\AM$, \cref{thm:b-positivity} was proved earlier by Sahi in \cite{Sahi-Jack,Sahi-Mac} where it was deduced from a weighted sum formulas for type $A$ binomial coefficients; see also \cite{Kan93,Kan96}. 
In this paper, we extend these results to obtain weighted sum formulas for binomial coefficients in all cases, which we now describe. We say that two partitions $\zeta_1 \supset \zeta_2$ are \emph{adjacent}, and we write $\bm \zeta_1\cover\bm \zeta_2$, if there is no other partition strictly in between; by a \emph{saturated chain} from $\lambda$ to $\mu$ we mean a sequence of the form  $\lambda= \zeta_0\cover \zeta_1\cover \cdots\cover \zeta_k=\mu$.

\begin{thmx}[Weighted Sum Formula]\label{thm:A}
	The binomial coefficient admits the following formula
	\begin{equation*}
		\binom\lambda\mu = \sum_{\bm \zeta\in\mathfrak C_{\lambda\mu}} \wt(\bm \zeta) \prod_{i=0}^{k-1} \binom{\bm \zeta_i}{\bm \zeta_{i+1}},
	\end{equation*}
	where $\mathfrak C_{\lambda\mu}$ consists of saturated chains from $\lambda$ to $\mu$ and the weight $\wt(\bm\zeta)$ is given in \cref{eqn:b-weight}. 
\end{thmx}

 We refer the reader to \cref{thm:A1} for the precise statement, as well as a similar formula for inverse binomial coefficients. We note that adjacent binomial coefficients can be computed by explicit combinatorial formulas, see \cref{prop:adj-pos} for details. \cref{thm:A} provides an alternative proof of \cref{thm:b-positivity} along the lines of \cite{Sahi-Jack,Sahi-Mac}.

\subsection{Littlewood--Richardson Coefficients for Interpolation Polynomials}
We also consider the \textbf{structure constants} $c_{\mu\nu}^\lambda$ for the interpolation polynomials:
\begin{equation*}
	h_\mu(x)h_\nu(x) = \sum_{\lambda} c_{\mu\nu}^\lambda h_\lambda(x).
\end{equation*}
These generalize the classical Littlewood--Richardson coefficients for Schur, Jack and Macdonald polynomials \cite{St89,Mac15,Yip} and have been studied in \cite{Sahi-Jack,Sahi-Mac}. 
We shall also call them the \textbf{(generalized) Littlewood--Richardson coefficients} (LR coefficients for short).

We prove a weighted sum formula for the structure constants. 
The formula is new for all families and is a natural generalization of the formula for binomial coefficients in \cref{thm:A}.
\begin{thmx}\label{thm:D}
	The generalized Littlewood--Richardson coefficients admits the following formula
	\begin{equation*}
		c_{\mu\nu}^\lambda = \sum_{\bm \zeta\in\mathfrak C_{\lambda\mu}} \wt_\nu^\LR(\bm \zeta) \prod_{i=0}^{k-1} \binom{\bm \zeta_i}{\bm \zeta_{i+1}},
	\end{equation*}
	where the weight $\wt_\nu^\LR(\bm \zeta) $ is given by \cref{eqn:LR-weight}.
\end{thmx}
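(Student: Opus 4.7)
The plan is to deduce \cref{thm:D} from \cref{thm:A} via the product-evaluation identity and the upper-triangular inversion of the binomial matrix. Evaluating the defining identity $h_\mu(x)h_\nu(x)=\sum_\sigma c_{\mu\nu}^\sigma h_\sigma(x)$ at $x=\overline\lambda$, and using $h_\sigma(\overline\lambda)=b_{\lambda\sigma}$, produces the fundamental identity
\begin{equation*}
	b_{\lambda\mu}\,b_{\lambda\nu} \;=\; \sum_\sigma c_{\mu\nu}^\sigma\, b_{\lambda\sigma}.
\end{equation*}
By the extra vanishing property, the matrix $B=(b_{\lambda\sigma})$ is upper-triangular in the containment order with nonzero diagonal, hence invertible. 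Writing $\widetilde b = B^{-1}$, I would solve for
\begin{equation*}
	c_{\mu\nu}^\lambda \;=\; \sum_{\mu,\nu\subseteq\sigma\subseteq\lambda} \widetilde b_{\lambda\sigma}\, b_{\sigma\mu}\, b_{\sigma\nu}.
\end{equation*}

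Next I would apply \cref{thm:A}, together with the inverse variant announced in \cref{thm:A1}, to expand $\widetilde b_{\lambda\sigma}$ as a sum over chains in $\mathfrak C_{\lambda\sigma}$ and $b_{\sigma\mu}$ as a sum over chains in $\mathfrak C_{\sigma\mu}$. Concatenating two such subchains at $\sigma$ produces a single chain $\bm\zeta=(\bm\zeta_0,\dots,\bm\zeta_k)\in\mathfrak C_{\lambda\mu}$ equipped with a distinguished splitting index $j$ determined by $\sigma=\bm\zeta_j$. Swapping the order of summation so that the outer sum is over $\bm\zeta$, the adjacent factors $\prod_{i=0}^{k-1}a_{\bm\zeta_i\bm\zeta_{i+1}}$ collect into the uniform product appearing in \cref{thm:D}, while the remaining data---two weight fragments times the evaluation $b_{\bm\zeta_j\nu}$ at the splitting point, summed over $0\leqslant j\leqslant k$---becomes the candidate weight attached to $\bm\zeta$.

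The main obstacle is verifying that this candidate weight agrees with the explicit $\wt_\nu^{\LR}(\bm\zeta)$ given by \cref{eqn:LR-weight}. The crux is a multiplicative/telescoping property of $\wt$: the weight of a chain from $\lambda$ to $\sigma$ and the inverse-weight of a chain from $\sigma$ to $\mu$ should factor cleanly across the splitting index, so that their product depends only on the full chain $\bm\zeta$ and on $j$; summing over $j$ with the extra factor $b_{\bm\zeta_j\nu}$ should then collapse to the closed form of $\wt_\nu^{\LR}(\bm\zeta)$. A secondary technical point is controlling the alternating signs carried by the inverse-binomial formula so that they cancel against the corresponding signs in $\wt$, leaving a single sum over chains of a single weighted product as claimed.
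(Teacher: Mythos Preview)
Your approach is correct and takes a genuinely different route from the paper. The paper proves \cref{thm:D} by verifying that the right-hand side satisfies the two-term recursion \cref{eqn:LR-recursion} for $c_{\mu\nu}^\lambda$, reducing everything to a rational-function identity among the numbers $\|\overline{\bm\zeta_i}\|$ that it checks termwise. You instead start from the closed formula $c_{\mu\nu}^\lambda=\sum_\sigma b'_{\lambda\sigma}b_{\sigma\mu}b_{\sigma\nu}$ (the paper's \cref{eqn:c=b'bb}) and substitute both halves of \cref{thm:A1}. Your ``main obstacle'' turns out to be benign: for a full chain $\bm\zeta$ split at index $j$, the product $\wt'(\bm\zeta_0,\dots,\bm\zeta_j)\cdot\wt(\bm\zeta_j,\dots,\bm\zeta_k)$ is exactly the $j$th summand of \cref{eqn:LR-weight} divided by $b_{\bm\zeta_j\nu}$, since the sign $(-1)^j$ in $\wt'$ flips each numerator $\|\overline{\bm\zeta_{i+1}}\|-\|\overline{\bm\zeta_i}\|$ to $\|\overline{\bm\zeta_i}\|-\|\overline{\bm\zeta_{i+1}}\|$ and the two denominators concatenate to $\prod_{i\ne j}(\|\overline{\bm\zeta_j}\|-\|\overline{\bm\zeta_i}\|)$. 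Your route is more constructive---it \emph{derives} the weight rather than verifying a guessed formula---at the cost of invoking both the direct and inverse parts of \cref{thm:A1}; the paper's recursion check uses less input but offers less explanation for why \cref{eqn:LR-weight} takes its Lagrange-interpolation shape.
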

See \cref{thm:D1} for the precise statement.
In \cref{thm:LR-p}, we also prove a similar formula for the expansion coefficients for multiplying any symmetric polynomial $p(x)$.

We show that adjacent Littlewood--Richardson coefficients are always positive.
\begin{thmx}[Adjacent Positivity for LR Coefficients]\label{thm:aLR-positivity}
	If $\lambda\cover\mu$, then the adjacent Littlewood--Richardson coefficient $c_{\mu\nu}^\lambda$ lies in $\fp$. If, in addition, $\lambda\supseteq\nu\neq\bm0$ then $c_{\mu\nu}^\lambda\in\fpp$.
\end{thmx}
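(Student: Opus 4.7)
My plan is to derive the explicit closed-form identity
$$c_{\mu\nu}^\lambda = b_{\lambda\mu}\bigl(b_{\lambda\nu} - b_{\mu\nu}\bigr)\qquad (\lambda \cover \mu),$$
and then read off both conclusions from \cref{thm:b-positivity,thm:b-mono}.

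The first ingredient is the triangularity $c_{\mu\nu}^\eta = 0$ unless $\eta \supseteq \mu$. Indeed, the extra vanishing of $h_\mu$ shows that the product $h_\mu(x) h_\nu(x)$ vanishes at $\overline\kappa$ whenever $\kappa \not\supseteq \mu$. On the other hand, $\{h_\eta : \eta \supseteq \mu,\ |\eta|\leqslant D\}$ spans exactly the subspace of (symmetric) polynomials of degree at most $D$ with this vanishing: the matrix $\bigl(h_\eta(\overline\kappa)\bigr)_{\eta,\kappa}$ is lower triangular with $1$'s on the diagonal by the defining vanishing and normalization. Expanding $h_\mu h_\nu$ in the $h$-basis thus forces $c_{\mu\nu}^\eta = 0$ for $\eta \not\supseteq \mu$. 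As a warm-up, evaluating the defining expansion at $x = \overline\mu$ leaves only the single term $\eta = \mu$ and yields $c_{\mu\nu}^\mu = b_{\mu\nu}$.

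Now I evaluate at $x = \overline\lambda$ for $\lambda \cover \mu$. Combining the above triangularity with $b_{\lambda\eta} = 0$ for $\eta\not\subseteq \lambda$, the surviving $\eta$ satisfy $\mu \subseteq \eta \subseteq \lambda$, which leaves only $\eta \in \{\mu,\lambda\}$ since $|\lambda|-|\mu|=1$. Using $b_{\lambda\lambda} = 1$ gives $b_{\lambda\mu} b_{\lambda\nu} = b_{\mu\nu} b_{\lambda\mu} + c_{\mu\nu}^\lambda$, which rearranges to the displayed formula. Then \cref{thm:b-positivity} gives $b_{\lambda\mu} \in \fpp$ (since $\lambda \supseteq \mu$), and \cref{thm:b-mono} gives $b_{\lambda\nu} - b_{\mu\nu} \in \fp$, upgraded to $\fpp$ precisely when $\lambda \supseteq \nu \neq \bm 0$ (the hypothesis $\lambda \neq \mu$ of that theorem is automatic from $\lambda \cover \mu$). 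Since $\fp$ and $\fpp$ are multiplicatively closed cones with $\fpp\cdot\fp\subseteq\fp$ and $\fpp\cdot\fpp\subseteq\fpp$, the two assertions of the theorem follow.

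The main point requiring care is the triangularity argument above; it is a formal consequence of the extra vanishing property but deserves to be spelled out. Once that is in place, the theorem falls out as a clean two-line corollary, with all the real work packed into the monotonicity statement \cref{thm:b-mono}. The edge cases $\lambda \not\supseteq \nu$ and $\nu = \bm 0$ are consistent with the formula: in the former both $b_{\lambda\nu}$ and $b_{\mu\nu}$ vanish, and in the latter $c_{\mu,\bm 0}^\lambda = \delta_{\mu\lambda} = 0$ since $\lambda\neq\mu$.
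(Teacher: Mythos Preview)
Your proof is correct and arrives at exactly the same key identity as the paper: $c_{\mu\nu}^\lambda = a_{\lambda\mu}(b_{\lambda\nu}-b_{\mu\nu})$ for $\lambda\cover\mu$ (recall $a_{\lambda\mu}=b_{\lambda\mu}$ in this case), which is \cref{cor:abc} in the text, followed by the same appeal to \cref{thm:b-positivity,thm:b-mono}.

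The one genuine difference is in how you obtain that identity. The paper extracts it as the $k=1$ case of the full weighted sum formula for LR coefficients (\cref{thm:D}). You instead argue directly: establish the triangularity $c_{\mu\nu}^\eta=0$ for $\eta\not\supseteq\mu$ from the extra vanishing property, then evaluate the defining product expansion at $\overline\lambda$ so that only $\eta\in\{\mu,\lambda\}$ survive. This is more elementary and self-contained for the adjacent case, bypassing the weighted sum machinery entirely; the paper's route, on the other hand, situates the identity as a degenerate instance of a more general structure. Your triangularity sketch is a little terse (the clean inductive step is: if $\eta'\subseteq\eta\not\supseteq\mu$ then $\eta'\not\supseteq\mu$), but the argument is sound.
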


\subsection{Integrality of Interpolation Polynomials}
As a further application, we address the matter of integrality, which, for Jack polynomials, means that the coefficients of $\tau$ lie in $\mathbb Z$ (see \cref{eqn:I-J} for the precise definition).
In \cite{KSinv,NSS23}, it is shown that the expansion coefficients of integral Jack polynomials and integral interpolation Jack polynomials into the (augmented) monomial symmetric polynomials are integral and positive.
Our result concerns integral adjacent binomial coefficients, $A_{\lambda\mu}$, which are equal to the generalized binomial coefficients $\binom{\lambda}{\mu}$ multiplied by a normalization factor. We show that they are integral and positive.
\begin{thmx}[Integrality and Positivity]\label{thm:a-int}
	For the families $\mathcal F=\AJ$ and $\BJ$, if $\lambda\cover\mu$, then the integral adjacent binomial coefficient $A_{\lambda\mu}$ is a polynomial with non-negative integer coefficients in the parameter(s).
	For the families $\mathcal F=\AM$ and $\BM$, if $\lambda\cover\mu$, then the integral adjacent binomial coefficient $A_{\lambda\mu}$, after a re-parametrization and up to some sign and powers, is a polynomial with non-negative integer coefficients in the new parameters.
\end{thmx}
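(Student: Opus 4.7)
The plan is to work directly from the explicit product formula for adjacent binomial coefficients in \cref{prop:adj-pos} together with the Knop--Sahi integral normalization. Since $\lambda\cover\mu$, we have $\lambda=\mu+e_k$ for a unique row $k$, so in all four families $a_{\lambda\mu}$ factors as a finite product of hook-type quantities indexed by the row and column of the new box $\lambda\setminus\mu$, plus, in type $BC$, boundary factors involving the extra parameters. The integral coefficient $A_{\lambda\mu}$ is obtained by rescaling $h_\lambda$ and $h_\mu$ to their integral forms; concretely, $A_{\lambda\mu}$ equals $a_{\lambda\mu}$ multiplied by the telescoping ratio of hook normalizers of $\lambda$ and $\mu$, and for $\lambda\cover\mu$ this ratio cancels exactly every denominator in the formula for $a_{\lambda\mu}$.

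For the Jack families $\AJ$ and $\BJ$ I would then observe that every surviving factor in $A_{\lambda\mu}$ is an upper or lower hook value of the form $a+\tau(l+1)$ or $(a+1)+\tau l$ with $a,l\in\mathbb Z_{\geqslant 0}$, or, in the $\BJ$ case, a non-negative integer combination of $\tau$ and the boundary parameters arising from the extra $BC$ factors. Each such factor is manifestly a polynomial in the parameters with non-negative integer coefficients, so the product is as well, giving the first half of the theorem.

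For the Macdonald families $\AM$ and $\BM$ the same cancellation of denominators applies, but the surviving hook-type factors have $q,t$-deformed shape $q^a t^l-1$ (together with $BC$-boundary monomials in $\BM$). Applying the re-parametrization stipulated in the theorem and extracting an overall sign and the cumulative $q,t$-monomial produced by the sign flips of these factors, each basic factor turns into a polynomial with non-negative integer coefficients in the new parameters; the product of such factors then gives the claim.

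The main obstacle is the bookkeeping in the Macdonald case: the accumulated signs and $q,t$-monomials across the whole product must assemble into a single overall sign and a single monomial prefactor, so that the statement really takes the form claimed, and in type $\BM$ the boundary contributions must be tracked consistently through the re-parametrization. None of these steps requires ideas beyond \cref{prop:adj-pos} and the integral normalization, but the cell-by-cell cancellation across the four families must be carried out carefully, and the correct choice of re-parametrization in the $BM$ case is the most delicate point to pin down.
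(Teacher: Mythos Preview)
Your approach is the paper's: combine the product formula for $a_{\lambda\mu}$ from \cref{prop:adj-pos} with the integral normalization $A_{\lambda\mu}=c_\mu H(\mu)\,a_{\lambda\mu}$, so that every denominator is absorbed and only a product of hook factors $c_\mu(s),c_\mu'(s),c_\lambda(s),c_\lambda'(s)$ (together with the $d$-factors in type $BC$, and an overall sign and monomial in $q,t,a$ in the Macdonald cases) survives, each manifestly in $\mathbb I^+$. One small correction to your description: the normalizing factor is $c_\mu H(\mu)=\prod_{s\in\mu}c_\mu(s)c_\mu'(s)$, a product over $\mu$ alone rather than a telescoping ratio of $\lambda$- and $\mu$-normalizers; since the sets $C$ and $R$ in \cref{prop:adj-pos} are subsets of $\mu$, this single product already cancels every denominator in $a_{\lambda\mu}$.
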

See \cref{sec:app-int} for the precise statements.

\subsection{Ideas in the Proofs}

\cref{thm:contain} follows from \cref{thm:b-mono} and the Okounkov--Olshanski binomial formula. We prove \cref{thm:b-mono} by comparing the combinatorial formulas for binomial formulas, \cref{eqn:AJ-comb,eqn:BJ-comb,eqn:AM-comb,eqn:BM-comb}, due to Okounkov \cite{Oko98-BC,Oko98-Mac}. \cref{thm:b-mono} implies \cref{thm:b-positivity} by a direct argument. 

In the case of $\AJ$ and $\AM$ \cref{thm:b-positivity} was proved earlier in \cite{Sahi-Jack,Sahi-Mac} by a weighted sum formula for binomial coefficients. In \cref{thm:A} we extend this to obtain such a formula in all types, thus obtaining a second proof of \cref{thm:b-positivity}; and in \cref{thm:D} we prove an analogous weighted sum formula for Littlewood-Richardson coefficients. A key step here is \cref{lem:Pieri} which, though quite elementary, allows us to extend the arguments of \cite{Sahi-Jack,Sahi-Mac} to the case of $\BJ$ and $\BM$. 

For \cref{thm:aLR-positivity} we use a corollary of \cref{thm:D} (\cref{cor:abc}). This is a simple identity that relates adjacent LR coefficients with binomial coefficients, and enables us to deduce \cref{thm:aLR-positivity} from \cref{thm:b-positivity,thm:b-mono}.
Finally \cref{thm:a-int} follows from \cref{prop:H,prop:adj-pos} which give combinatorial formulas for certain normalization factors and adjacent binomial coefficients.

\subsection{Organization of the Paper}
In \cref{sec:pre}, we recall some preliminaries, including basic notions of partitions and tableaux, notation used in the paper, and basic definitions and properties of interpolation polynomials. 
In particular, \cref{table} contains useful information about the four families of interpolation polynomials.
In \cref{sec:rec&wt}, we give the precise statements and the proofs for \cref{thm:A,thm:D,thm:aLR-positivity}.
In addition, we prove \cref{thm:b-rec,thm:LR-rec-p} which give some recursion formulas, \cref{cor:abc} which relates adjacent LR coefficients with binomial coefficients, and \cref{thm:LR-p} which gives a formula for computing the expansion coefficients for multiplying any symmetric polynomial $p$.
In \cref{sec:wtsum-proof}, we first recall some formulas for the normalizing factor $H(\lambda)$ and adjacent binomial coefficients $a_{\lambda\mu}$ in \cref{prop:H,prop:adj-pos}, in particular, we show that adjacent binomial coefficients are positive.
In \cref{sec:comb-proof}, we prove \cref{thm:b-mono}.

In \cref{sec:app}, we discuss some applications and future extensions of our work: we prove \cref{thm:contain} about the containment order and \cref{thm:a-int} about integrality. We also make several conjectures \cref{conj:Jack-positivity,conj:int-J,conj:int-M,conj:LR-positivity,conj:LR-S}.
In \cref{sec:app-bino}, we recall the binomial theorems due to Okounkov--Olshanski and prove \cref{thm:contain}.
In \cref{sec:app-int}, we recall some work of \cite{KSinv,NSS23} on integrality for certain expansion coefficients, discuss the integrality of binomial coefficients and prove \cref{thm:a-int}.
In \cref{sec:app-molev}, we discuss the work of \cite{Molev} on double Schur functions. 
And finally, in \cref{sec:app-nonsym}, we briefly discuss the non-symmetric case.

\subsection{Related Results}

For the interested reader we provide a brief discussion of some related results in the existing literature on Jack, Macdonald, and interpolation polynomials.              

Jack polynomials were introduced by Jack \cite{Jack} as a one-parameter generalization of Schur functions and of the zonal polynomials that play an important role in multivariate statistics \cite{Muirhead82}. 
Along with Hall--Littlewood polynomials, they were one of the two key sources of inspiration for Macdonald’s introduction of his two-parameter family of symmetric functions \cite{Mac15}; see \cite{KS06} for a historical background. 
These polynomials, in turn, were the impetus behind Cherednik’s discovery of the double affine Hecke algebra \cite{Che-DAHA}. There are various combinatorial formulas for Jack and Macdonald polynomials, for example, \cite{St89,KSinv,Mac15,HHL05,CHMMW}. 
For interpolation analogs, see \cite{Oko98-BC,Oko98-Mac,Koo15}.
Non-symmetric analogs of these are studied in \cite{Opdam,Che-NSM,Sahi96,Knop97,Sahi98,Marshall,HHL08,DKS21}.

Interpolation polynomials arise naturally as solutions to the Capelli eigenvalue problem for invariant differential operators on a symmetric cone \cite{Sahi94}. The Capelli problem has analogs for other symmetric spaces studied in \cite{SZ17,SS19} and also for symmetric superspaces \cite{SS16,SSS20}. 
The solutions of these other problems are related to interpolation polynomials defined by Okounkov, Ivanov, and Sergeev and Veselov \cite{Ivanov,Oko98-BC,SV05}.
In the classical setting, the expansion of Schur functions into the power sum basis gives rise to irreducible characters of the symmetric group. 
This idea is generalized to Jack and Macdonald polynomials, giving the so-called Jack and Macdonald characters in \cite{Las08,BD23,DD24}, where these characters are characterized as the image of the power sum basis under the dehomogenization operator, which is also studied in \cite{KS96,NSS23}.


\section{Preliminaries}\label{sec:pre}
\subsection{Partitions}\label{sec:pre-partitions}
For this section, we refer to \cite[Chapter I]{Mac15}.

Throughout the paper, we will fix $n\geqslant1$ the number of variables. 
All four families of interpolation polynomials are indexed by partitions of length at most $n$.
Such a \textbf{partition} is an $n$-tuple of weakly-decreasing non-negative integers:
\begin{align*}
	\mathcal P_n \coloneqq \Set*{\lambda=(\lambda_1,\lambda_2,\dots,\lambda_n)\in\mathbb Z^n}{\lambda_1\geqslant \lambda_2\geqslant\dots\geqslant \lambda_n\geqslant0}. 
\end{align*}
For $\lambda\in\mathcal P_n$, the \textbf{size} of $\lambda$ is $|\lambda| \coloneqq \lambda_1+\dots+\lambda_n$, and let $\mathcal P_{n}^d\coloneqq \Set*{\lambda\in\mathcal P_n}{|\lambda|\leqslant d}.$

We write $\lambda\supseteq\mu$ if $\lambda_i\geqslant\mu_i$ for $1\leqslant i\leqslant n$. 
This partial order is called the \textbf{inclusion order} or the \textbf{containment order}. 
Write $\lambda \cover \mu$ if $\lambda\supseteq \mu$ and $|\lambda|=|\mu|+1$, called the \textbf{covering relation}.
Let $\mathfrak C_{\lambda\mu}$ be the set of saturated chains from $\lambda$ to $\mu$, where a \textbf{saturated chain} $\bm \zeta=(\bm \zeta_0,\dots,\bm \zeta_k)$ is defined by
\begin{align*}
	\lambda =\bm \zeta_0\cover \bm \zeta_1 \cover \cdots \cover \bm \zeta_{k-1} \cover \bm \zeta_k=\mu,
\end{align*}
where $k=|\lambda|-|\mu|$.
Saturated chains from $\lambda$ to $\mu$ correspond bijectively to standard tableaux of skew shape $\lambda/\mu$; we shall only use the former notion.

We shall identify a partition $\lambda$ with its \textbf{Ferrers diagram}, a left-justified rectangular array of boxes, with $\lambda_i$ boxes in row $i$, i.e.,
\begin{align*}
	\{\,(i,j):1\leqslant j\leqslant \lambda_i,~1\leqslant i\leqslant n\,\}.
\end{align*}
The \textbf{conjugate} of a partition (not necessarily of length at most $n$), denoted by $\lambda'$, is the partition associated to the transpose of the Ferrers diagram of $\lambda$.

Let $s = (i, j)\in\lambda$ denote the $j$th boxes in the $i$th row of the Ferrers diagram of $\lambda$, and define the \textbf{arm} and \textbf{coarm} of $s$ to be the number of boxes directly to the right and left of $s$, and the \textbf{leg} and \textbf{coleg} to be the number of boxes direct below and above $s$, i.e., 
\begin{align}
	a_\lambda(s) \coloneqq \lambda_i-j,\quad a'_\lambda(s) = j-1,\quad
	l_\lambda(s) \coloneqq \lambda_j'-i,\quad l_\lambda'(s)=i-1.
\end{align}

The containment order $\lambda\supseteq\mu$ holds if and only if the Ferrers diagram of $\lambda$ contains that of $\mu$. 
In this case, we write $\lambda/\mu$ for the set of boxes that are in $\lambda$ but not in $\mu$, and call it a \textbf{skew diagram}.
A \textbf{horizontal strip} is a skew diagram with at most one box in each column.
For a horizontal strip $\lambda/\mu$, denote by $R_{\lambda/\mu}$ (resp., $C_{\lambda/\mu}$) the set of boxes in a row (resp., column) of $\lambda$ that is intersecting $\lambda/\mu$ and by $(R\setminus C)_{\lambda/\mu}$ the set difference $R_{\lambda/\mu} \setminus C_{\lambda/\mu}$. 
It is clear that $(R\setminus C)_{\lambda/\mu}$ is a subset of $\mu$. 
See \cite[Page 6]{Koo15} for a nice example.

A \textbf{tableau} of shape $\lambda$ is a function $T:\lambda\to[n] \coloneqq \{\,1,\dots,n\,\}$, which is thought of as filling the boxes in $\lambda$ with numbers in $[n]$.
We say $T$ is a (column-strict) \textbf{reverse tableau} (\textbf{RT} for short) if $T(i,j)$ is weakly decreasing in $j$ and strictly decreasing in $i$.

Given an RT $T$ of shape $\lambda$, let
\begin{align*}
	\lambda^{(k)} \coloneqq \Set{s\in\lambda}{T(s)>k},\quad k=0,\dots,n.
\end{align*}
Then we have a descending chain of partitions:
\begin{align*}
	\lambda=\lambda^{(0)} \supseteq \lambda^{(1)}\supseteq\dots\supseteq\lambda^{(n-1)}\supseteq\lambda^{(n)}=(0^n),
\end{align*}
where each skew diagram $\lambda^{(i-1)}/\lambda^{(i)}$ is a horizontal strip.

Given any partition $\lambda$, an RT of shape $\lambda$ is called the \textbf{distinguished RT}, if the filling of the first row corresponds to the partition $\lambda'$
Distinguished RT is unique for each shape and can be given by 
\begin{align}\label{eqn:distinguishedRT}
	T(i,j)=l_\lambda(i,j)+1=\lambda_j'-i+1.
\end{align}
For example,
\begin{equation*}
	\ytableaushort{53322,42211,311,2,1}
\end{equation*}
is the distinguished RT for $\lambda=(55311)$ since its first row is $(53322)=\lambda'$.

Throughout the paper, we will assume $d$ is a non-negative integer, and $\lambda,\mu,\nu\in\mathcal P_n$ unless otherwise stated; also, let $\delta=(n-1,n-2,\dots,1,0)\in\mathcal P_n$ be the ``staircase'' partition.

\subsection{Notation}\label{sec:pre-notation}
For the purpose of being concise and uniform, we will introduce some common notation for the four families of interpolation polynomials.
We shall use 
\begin{align*}
	\mathcal F\in\{\,\AJ,~\BJ,~\AM,~\BM\,\}
\end{align*}
to indicate the family in discussion.
Denote by $\AJ,\AM$ the type $A$ interpolation polynomials and similarly $\BJ,\BM$ for type $BC$.

To each family $\mathcal F$, we associate the following ingredients, some given in \cref{table}.
\begin{itemize}
	\item $\mathcal W$, the Weyl group;
	\item $\mathbb F\supset \fp=\fpp\cup0$, the base field and the cone of positivity;
	\item $\Lambda$ and $\Lambda^d$, the corresponding polynomial ring and a certain subspace of $\Lambda$;
	\item $\overline{(\cdot)}:\mathcal P_n\to\mathbb F^n$, a shifting function;
	\item $h_\mu(x)$ and $h_\mu^\monic(x)$, the interpolation polynomial of \textbf{unital} and \textbf{monic} normalization for $\mu\in\mathcal P_n$;
	\item $H(\lambda):=h_\lambda^\monic(\overline\lambda)$ the normalization factor;
	\item $\|\cdot\|$, the top degree terms of $h_{\varepsilon_1}^\monic$;
	\item $b_{\lambda\mu}=\binom{\lambda}{\mu}$ and $a_{\lambda\mu}$, binomial coefficients and adjacent binomial coefficients.t
\end{itemize}
\begin{table}[h]
\begin{align*}
	\def\arraystretch{1.5}
	\begin{array}{|c|c|c|c|c|}
		\hline
		& \AJ & \BJ & \AM & \BM \\
		\hline
		\text{parameters} & \tau & \tau,\alpha & q,t & q,t,a \\
		\hline
		\mathbb F & \mathbb Q(\tau) & \mathbb Q(\tau,\alpha) & \mathbb Q(q,t) & \mathbb Q(q,t,a) \\
		\hline
		\mathcal W& S_n & S_n\ltimes\mathbb Z_2^n & S_n & S_n\ltimes\mathbb Z_2^n \\
		\hline
		\Lambda& \mathbb F[X]^{S_n} & 
		\mathbb F[X]^{S_n\ltimes\mathbb Z_2^n}
		& \mathbb F[X]^{S_n} 
		& \mathbb F[X,X^{-1}]^{S_n\ltimes\mathbb Z_2^n}		\\\hline
		\overline\lambda & \lambda+\tau\delta & \lambda+\tau\delta+\alpha & q^{\lambda}t^\delta & a q^{\lambda} t^\delta \\
		\hline
		\overline\lambda_i & \lambda_i+(n-i)\tau & \lambda_i+(n-i)\tau+\alpha & q^{\lambda_i} t^{n-i} & a q^{\lambda_i} t^{n-i} \\
		\hline
		\norm{x} & \sum x_i & \sum x_i^2 & \sum x_i & \sum \(x_i+x_i^{-1}\)\\
		\hline
	\end{array}
\end{align*}
\caption{Notation}
\label{table}
\end{table}

\subsubsection{The Base Field and the Cone of Positivity}
In all cases, $\fpp$ is defined by excluding the zero function from $\fp$.

For $\AJ$, the base field $\mathbb F$ is $\mathbb Q(\tau)$, the field of rational functions in $\tau$.
Let 
\begin{align}\label{eqn:AJ-F}
	\fp\coloneqq \Set*{\frac fg}{f,g\in\mathbb Z_{\geqslant0}[\tau],\ g\neq0},
\end{align}
then $\fp$ is a convex multiplicative cone, i.e., it is closed under addition, multiplication, and scalar multiplication by $\mathbb Q_{\geqslant0}$.
When we view $\tau$ as a real number instead of an indeterminate, we have $f(\tau)\geqslant0$ if $\tau>0$ for $f\in\fp$; and $f(\tau_0)=0$ for some $\tau_0>0$ if and only if $f$ is identically 0.

\begin{remark}
	Our definition of $\fp$ is the same as the $\mathbb F^{+}$ in \cite[Section 1.4]{Sahi-Jack}.
	In that paper, a subcone $\mathbb F^{++}$, consisting of functions with nonzero limit as $\tau\to\infty$ is also considered.
	Also, we do not require $f$ and $g$ to be coprime in the definition (otherwise $\fp$ would not be multiplicatively closed).
	It could happen that a polynomial with some negative coefficients lies in $\fp$, for example, $\tau^2-\tau+1 = \frac{\tau^3+1}{\tau+1}\in\fp$.
\end{remark}

For $\BJ$, the base field is $\mathbb Q(\tau,\alpha)$ and 
\begin{align}
	\fp \coloneqq \Set*{\frac fg}{f,g\in\mathbb Z_{\geqslant0}[\tau,\alpha],\ g\neq0}.
\end{align}
Then for $f\in\fp$, we also have the properties that $f(\tau,\alpha)\geqslant0$ if $\tau,\alpha>0$; and $f(\tau_0,\alpha_0)=0$ for some $\tau_0,\alpha_0>0$ if and only if $f$ is identically 0.

For $\AM$ and $\BM$, the base field is $\mathbb Q(q,t)$ and $\mathbb Q(q,t,a)$, respectively. 
The cone of positivity consists of functions that map $(q,t)\in(0,1)\times(0,1)$ and $(q,t,a)\in(0,1)\times(0,1)\times(0,1)$ to $[0,\infty)$, namely,
\begin{gather}
	\fp \coloneqq \Set*{f\in\mathbb Q(q,t)}{f(q,t)\geqslant0 \text{ when } q,t\in(0,1)},\quad \mathcal F=\AM;	\label{eqn:AM-F}\\
	\fp \coloneqq \Set*{f\in\mathbb Q(q,t,a)}{f(q,t,a)\geqslant0 \text{ when } q,t,a\in(0,1)},\quad \mathcal F=\BM.
\end{gather}
(In \cref{sec:app-int}, a new parametrization for Macdonald polynomials, along with a new notion of positivity and integrality, is given.)

In all cases, for $f,g\in\mathbb F$, we write $f\geqslant g$ if $f-g\in\fp$.

\subsubsection{Weyl Group}
The Weyl group $S_n$ acts by permuting the variables; $\mathbb Z_2^n$ acts by signs ($x_i\mapsto -x_i$) for $\mathcal F=\BJ$ and by reciprocals ($x_i\mapsto x_i^{-1}$) for $\mathcal F=\BM$.

In \cref{table}, $X$ is short for $(x_1,\dots,x_n)$, and we have 
\begin{gather*}
	\Lambda = \mathbb F[x_1^2,\dots,x_n^2]^{S_n}, \quad \mathcal F=\BJ;	\\
	\Lambda = \mathbb F[x_1+x_1^{-1},\dots,x_n+x_n^{-1}]^{S_n}, \quad \mathcal F=\BM,
\end{gather*}
i.e., symmetric polynomials in the variables $(x_i^2)_i$ and $(x_i+x_i^{-1})_i$ respectively.

\subsubsection{Degree}
A Laurent polynomial $f\in\mathbb F[x_1^{\pm1},\dots,x_n^{\pm1}]$ can be written as $f(x) = \sum_{\alpha\in\mathbb Z^n} c_\alpha x^\alpha$
with $c_\alpha\in\mathbb F$ and nonzero for finitely many $\alpha\in\mathbb Z^n$ and $x^\alpha \coloneqq x_1^{\alpha_1}\dots x_n^{\alpha_n}$.
The \textbf{degree} of $f$ is defined by
\begin{align*}
	\deg f \coloneqq \begin{dcases*}
		-\infty,&if $f$ is identically 0;	\\
		\max\Set*{|\alpha_1|+\dots+|\alpha_n|}{c_\alpha\neq0},&otherwise.
	\end{dcases*}
\end{align*}

We shall write $\Lambda^d$ for the subspace of $\Lambda$ consisting of polynomials of degree at most $d$ when $\mathcal F=\AJ,\AM,\BM$; and at most $2d$ when $\mathcal F=\BJ$.

\subsection{Interpolation Polynomials}
In this subsection, we recall some definitions and propositions of interpolation polynomials.

We begin with a proposition about symmetric interpolation.
\begin{proposition}\label{prop:interpolation}
	Fix $d\geqslant0$ and any function $\overline f:\mathcal P_n^d\to\mathbb F$, then there is a unique polynomial $f$ in $\Lambda^d$ such that 
	\begin{align*}
		f(\overline \lambda)=\overline f(\lambda),	\quad\forall \lambda \in \mathcal P_n^d.
	\end{align*}
\end{proposition}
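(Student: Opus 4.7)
The plan is to read the statement as a linear-algebra claim about the evaluation map $\mathrm{ev}\colon \Lambda^d \to \mathbb F^{\mathcal P_n^d}$, $f\mapsto (f(\overline\lambda))_{\lambda\in\mathcal P_n^d}$. Because this is a linear map between finite-dimensional $\mathbb F$-spaces, I plan to prove separately that (i) the two sides have the same dimension and (ii) $\mathrm{ev}$ is surjective; existence and uniqueness then follow at once from bijectivity. I will obtain (ii) by writing down an explicit preimage of any target function using the interpolation polynomials $h_\mu$ themselves, whose existence is taken from the cited references.

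For (i) I would check in each of the four families that $\dim_{\mathbb F}\Lambda^d = |\mathcal P_n^d|$. For $\mathcal F=\AJ,\AM$ this is the standard fact that the monomial symmetric polynomials $\{m_\lambda:\lambda\in\mathcal P_n^d\}$ form an $\mathbb F$-basis of the degree-$\leqslant d$ subspace of $\mathbb F[x_1,\dots,x_n]^{S_n}$. For $\mathcal F=\BJ$ the same statement applies after the substitution $y_i=x_i^2$, which matches the convention $\deg\leqslant 2d$ used in the definition of $\Lambda^d$. For $\mathcal F=\BM$ the ring $\Lambda$ is a polynomial ring in $y_i=x_i+x_i^{-1}$, and the monomial symmetric polynomials in the $y_i$ indexed by $\mathcal P_n^d$ form an $\mathbb F$-basis of $\Lambda^d$.

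For (ii) I would invert $\mathrm{ev}$ explicitly using the $h_\mu$. Given a target $\overline f\colon \mathcal P_n^d\to\mathbb F$, order $\mathcal P_n^d$ by nondecreasing size and define coefficients $c_\mu\in\mathbb F$ recursively by
\begin{equation*}
c_\mu\coloneqq \overline f(\mu) - \sum_{\nu:\,|\nu|<|\mu|} c_\nu\, h_\nu(\overline\mu),
\end{equation*}
then set $f\coloneqq\sum_{\mu\in\mathcal P_n^d} c_\mu\, h_\mu\in\Lambda^d$. For any $\lambda\in\mathcal P_n^d$, the defining vanishing $h_\mu(\overline\lambda)=\delta_{\lambda\mu}$ for $|\lambda|\leqslant|\mu|$ kills every summand with $|\mu|>|\lambda|$ as well as every summand with $|\mu|=|\lambda|$ and $\mu\neq\lambda$, so $f(\overline\lambda)$ reduces to $c_\lambda + \sum_{\nu:\,|\nu|<|\lambda|} c_\nu\, h_\nu(\overline\lambda)$, which by construction equals $\overline f(\lambda)$. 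Hence $\mathrm{ev}$ is surjective, and combined with (i) this yields bijectivity.

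The only genuinely substantive input is the existence of the family $\{h_\mu\}$ with the stated vanishing and normalization property, which is the content of the cited references and which I treat as given. The remaining difficulty is essentially bookkeeping about what ``degree $\leqslant d$'' means in each of the four settings (notably the factor-of-two convention for $\BJ$ and the Laurent-polynomial convention for $\BM$); since the monomial basis and the $h_\mu$ obey identical degree conventions in each case, the argument goes through uniformly across all four families.
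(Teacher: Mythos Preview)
Your argument has a circularity problem relative to the paper's logical order. In this paper, Proposition~\ref{prop:interpolation} comes \emph{before} the definition of the interpolation polynomials $h_\mu$, and indeed the very existence and uniqueness of $h_\mu$ is justified by invoking this proposition (applied to the function $\overline f=\delta_{\mu}$). Your step (ii) constructs a preimage using the $h_\mu$, so you are assuming the objects whose well-definedness rests on the statement you are trying to prove. You write that ``the existence of the family $\{h_\mu\}$ \dots\ is the content of the cited references,'' but the references the paper cites here (KS96, Sahi96, DKS21) are cited precisely as proofs of this interpolation proposition itself, not as independent constructions of the $h_\mu$; so in the paper's framework your argument reduces to ``the proposition holds because the proposition holds.''

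A non-circular version of your plan would be: keep step (i), and replace step (ii) by a direct proof of \emph{injectivity} of the evaluation map (if $f\in\Lambda^d$ vanishes at all $\overline\lambda$ with $|\lambda|\leqslant d$, then $f=0$). That is exactly what the cited references establish, typically by an inductive or Vandermonde-type argument, and it does not presuppose the $h_\mu$. Alternatively, you could first construct the $h_\mu$ independently via Okounkov's explicit combinatorial formulas and verify their vanishing property directly, and \emph{then} run your triangular-solve; but that is substantially more work than you indicate, and is not what the references you appeal to actually do.
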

\begin{proof}
	When $\mathcal F=\AJ$, see \cite[Theorem~2.1]{KS96}.
	When $\mathcal F=\AM$, see \cite[Theorem~3.1]{Sahi96}.
	When $\mathcal F=\BM$, see \cite[Proposition 3.3]{DKS21}.
	When $\mathcal F=\BJ$, the proof for the case $\mathcal F=\BM$ could be easily modified for this case.
\end{proof}

Now, we can define the interpolation polynomials.
\begin{definition}
	The unital \textbf{interpolation polynomial} indexed by $\mu\in\mathcal P_n$ is the unique function in $\Lambda^{|\mu|}$ that interpolates the characteristic function at $\mu$ (restricted to $\mathcal P_n^{|\mu|}$). 
	That is, it is the unique $\mathcal W$-symmetric function that satisfies the following interpolation condition and degree condition:
	\begin{gather}
		h_\mu(\overline\lambda) = \delta_{\lambda\mu}, \quad\forall \lambda\in\mathcal P_n,\ |\lambda|\leqslant|\mu|, \label{eqn:def-vanishing}\\
		\deg h_\mu \leqslant \begin{dcases}
			|\mu|, &\mathcal F=\AJ,\AM,\BM;	\\
			2|\mu|,&\mathcal F=\BJ.
		\end{dcases}\label{eqn:def-deg}
	\end{gather}
\end{definition}
\begin{remark}
	The degree condition above can be improved to equality. 
	Argue by induction on $|\mu|$. 
	The base case is clear since $\mathcal P_n^0=\{\bm0=(0^n)\}$ and $\Lambda^0$ consists of constant functions. 
	For the inductive step, if $h_\mu$ had a strictly smaller degree, it would lie in $\Lambda^{|\mu|-1}$ and interpolate the zero function on $\mathcal P_n^{|\mu|-1}$ by definition, hence is equal to the zero function by Proposition~\ref{prop:interpolation}, a contradiction.
\end{remark}

The normalization here is called \textbf{unital} in the sense that $h_\mu(\overline\mu)=1$.
One also has \textbf{monic} normalization, denoted by $h_\mu^\monic$, in the sense that the coefficient of $x^\mu$ in $h_\mu^\monic$ is 1 when $\mathcal F=\AJ,\AM,\BM$; and the coefficient of $x^{2\mu}$ is 1 when $\mathcal F=\BJ$.
The two normalizations are related by a normalizing factor $H(\mu) \coloneqq h_\mu^\monic(\overline\mu)$ and
\begin{equation}\label{eqn:monic}
	h_\mu(x) = \frac{h_\mu^\monic(x)}{H(\mu)}=\frac{h_\mu^\monic(x)}{h_\mu^\monic(\overline\mu)}.
\end{equation}
In \cref{prop:H}, combinatorial formulas for $H(\mu)$ for each family are given.
In \cref{sec:app-int}, we also discuss the \textbf{integral} normalization.

It follows from Proposition~\ref{prop:interpolation} that $\Set{h_\mu}{\mu\in \mathcal P_n^d}$ (reps., $\Set{h_\mu}{\mu\in\mathcal P_n}$) forms an $\mathbb F$-basis for the ring of symmetric polynomials $\Lambda^d$ (reps., $\Lambda$).

We recall the following combinatorial formulas due to Okounkov \cite{Oko98-BC,Oko98-Mac}, which generalize the formulas for ordinary Jack and Macdonald polynomials given in \cite{Mac15}. 
(Okounkov uses shifted symmetry instead of the usual symmetry; the parameters we use are also different from his.)

{\allowdisplaybreaks[4]
\begin{alignat}{3}
	\text{J}:	&\phantom{n}&&P_\lambda(x;\tau) &\quad=&\quad\sum_T \psi_T(\tau)\prod_{s\in\lambda} x_{T(s)},	\label{eqn:J-comb}\\
	\AJ:	&&&	h_\lambda^\monic(x;\tau)	&=&\quad	\sum_T \psi_T(\tau)\prod_{s\in\lambda}\(x_{T(s)}-\(a_\lambda'(s)+(n-T(s)-l_\lambda'(s))\tau\)\),	\label{eqn:AJ-comb}\\
	\BJ:	&&&	h_\lambda^\monic(x;\tau,\alpha)	&=&\quad	\sum_T \psi_T(\tau)\prod_{s\in\lambda}\(x_{T(s)}^2-\(a_\lambda'(s)+(n-T(s)-l_\lambda'(s))\tau+\alpha\)^2\),	\label{eqn:BJ-comb}	\\
	\text{M}:	&\phantom{n}&&P_\lambda(x;q,t) &\quad=&\quad	\sum_T \psi_T(q,t)\prod_{s\in\lambda} x_{T(s)},	\label{eqn:M-comb}\\
	\AM:	&&&	h_\lambda^\monic(x;q,t)	&=&\quad	\sum_T \psi_T(q,t)\prod_{s\in\lambda} \(x_{T(s)}-q^{a_\lambda'(s)}t^{n-T(s)-l_\lambda'(s)}\),	\label{eqn:AM-comb}\\
	\BM:	&&&	h_\lambda^\monic(x;q,t,a) &=&\quad	\sum_T \psi_T(q,t)\prod_{s\in\lambda} \bigg(x_{T(s)}+x_{T(s)}^{-1}\notag\\
	&&&&\=&-q^{a_\lambda'(s)}t^{n-T(s)-l_\lambda'(s)}a-\(q^{a_\lambda'(s)}t^{n-T(s)-l_\lambda'(s)}a\)^{-1}\bigg),	\label{eqn:BM-comb}
\end{alignat}
}
where the sums run over RTs of shape $\lambda$, and $\psi_T(\tau)$ and $\psi_T(q,t)$ are rational functions, given by
\begin{align}
	\psi_T = \prod_{i=1}^n \psi_{\lambda^{(i-1)}/\lambda^{(i)}},
	\quad \psi_{\mu/\nu} = \prod_{s\in (R\setminus C)_{\mu/\nu}} \frac{b_\nu(s)}{b_\mu(s)},
\end{align}
where $b_\lambda$ is the ratio of hooklengths, given by
\begin{gather}
	b_\lambda(s;\tau) \coloneqq \frac{c_\lambda(s;\tau)}{c_\lambda'(s;\tau)},\quad b_\lambda(s;q,t) \coloneqq \frac{c_\lambda(s;q,t)}{c_\lambda'(s;q,t)},	\\
	c_\lambda(s;\tau) \coloneqq a_\lambda(s)+\tau(l_\lambda(s)+1),\quad
	c_\lambda'(s;\tau) \coloneqq a_\lambda(s)+\tau l_\lambda(s)+1,	\label{eqn:hooklength}\\
	c_\lambda(s;q,t)\coloneqq 1 - q^{a_\lambda(s)}t^{l_\lambda(s)+1},\quad
	c_\lambda'(s;q,t)\coloneqq 1 - q^{a_\lambda(s)+1}t^{l_\lambda(s)}.\label{eqn:hooklength-qt}
\end{gather}
\begin{remark}\label{rmk:tau=1/alpha}
	It should be noted that our Jack parameter $\tau$ corresponds to the parameter $\alpha$ in \cite[Section VI.10]{Mac15} by $\tau=\frac1\alpha$, so Macdonald's $P_\lambda^{(\alpha)}(x)$ is equal to our $P_\lambda(x;\frac1\alpha)$.
	Also, our hooklength $c_\mu(s;\tau)$ is different from Macdonald's; Macdonald's would-be $c_\mu(s;\alpha)\coloneqq \alpha a_\mu(s)+l_\mu(s)+1$ in \cite[VI.~(10.21)]{Mac15} is equal to our $\frac1\tau\cdot c_\mu(s;\tau)$.
\end{remark}
\begin{remark}
	There are various notation for interpolation polynomials and shifted polynomials.
	Our notation mostly follow Koornwinder's notation in \cite{Koo15}, apart from changing his $P^{\mathrm{ip}}_\mu$ to our $h_\mu^\monic$.
	For example, our interpolation Jack polynomial $h_\mu^{\AJ,\monic}(x;\tau)$ is the same as his $P_\mu^{\mathrm{ip}}(x;\tau)$.
	See \cite[Section~5]{Koo15} for relations of $P^{\mathrm{ip}}_\mu=h_\mu^\monic$ with the notation in \cite{Sahi94,Sahi96,KS96,Knop97,OO97,Oko98-Mac,Oko98-BC,Rains05}.
\end{remark}

Very recently, Ben Dali and Williams found another combinatorial fomrula for type $A$ interpolation Macdonald polynomials in \cite{BDW25} using signed multiline queues, generalizing the work of Corteel–Mandelshtam–Williams \cite{CMW22} for Macdonald polynomials.

The following limit formulas follow easily from definitions and some are given in \cite{Mac15,Oko98-BC,Oko98-Mac,Koo15} in various notation. Most of these are \emph{not} needed in this paper; we collect them here for the sake of completeness.
\begin{proposition}
	In our notation, we have the following limits.
	\begin{enumerate}
		\item The $\tau$-hooklengths are limits of $(q,t)$-hooklengths:
		\begin{align}
			c_\lambda(s;\tau) = \lim_{q\to1} \frac{c_\lambda(s;q,q^\tau)}{1-q},\quad
			c_\lambda'(s;\tau) = \lim_{q\to1} \frac{c_\lambda'(s;q,q^\tau)}{1-q}.
		\end{align}
		\item 	Jack polynomials are limits of Macdonald polynomials:
		\begin{align}
			P_\lambda(x;\tau) &= \lim_{q\to1} P_\lambda(x;q,q^\tau),	\label{eqn:limit1}\\
			h_\lambda^{\monic,\AJ}(x;\tau) &= \lim_{q\to1} \frac{h_\lambda^{\monic,\AM}(q^x;q,q^\tau)}{(q-1)^{|\lambda|}},	\\
			h_\lambda^{\monic,\BJ}(x;\tau,\alpha) &= \lim_{q\to1} \frac{h_\lambda^{\monic,\BM}(q^x;q,q^\tau,q^\alpha)}{(q-1)^{2|\lambda|}}, \\
			h_\lambda^{\AJ}(x;\tau) &= \lim_{q\to1} h_\lambda^{\AM}(q^x;q,q^\tau),	\\
			h_\lambda^{\BJ}(x;\tau,\alpha) &= \lim_{q\to1} h_\lambda^{\BM}(q^x;q,q^\tau,q^\alpha), 
		\end{align}
		where $q^x=(q^{x_1},\dots,q^{x_n})$, and 
		\begin{align}
			b_{\lambda\mu}^{\AJ}(\tau) &= \lim_{q\to1} b_{\lambda\mu}^{\AM}(q,q^\tau),	\label{eqn:limit2}\\
			b_{\lambda\mu}^{\BJ}(\tau,\alpha) &= \lim_{q\to1} b_{\lambda\mu}^{\BM}(q,q^\tau,q^\alpha).	
		\end{align}
		\item Type $A$ interpolation polynomials are limits of type $BC$:
		\begin{align}
			h_\lambda^{\monic,\AJ}(x;\tau) &= \lim_{\alpha\to\infty} \frac{h_\lambda^{\monic,\BJ} (x+\alpha;\tau,\alpha)}{(2\alpha)^{|\lambda|}},	\\
			h_\lambda^{\monic,\AM}(x;q,t) &= \lim_{a\to\infty} \frac{h_\lambda^{\monic,\BM} (ax;q,t,a)}{a^{|\lambda|}},	\\
			h_\lambda^{\AJ}(x;\tau) &= \lim_{\alpha\to\infty} h_\lambda^{\BJ} (x+\alpha;\tau,\alpha),	\\
			h_\lambda^{\AM}(x;q,t) &= \lim_{a\to\infty} h_\lambda^{\BM} (ax;q,t,a),
		\end{align}
		and
		\begin{align}
			b_{\lambda\mu}^{\AJ}(\tau) &= \lim_{\alpha\to\infty} b_{\lambda\mu}^{\BJ}(\tau,\alpha),	\\
			b_{\lambda\mu}^{\AM}(q,t) &= \lim_{a\to\infty} b_{\lambda\mu}^{\BM}(q,t,a).
		\end{align}
		\item Limits of interpolation Macdonald polynomials as $q\to1$:
		\begin{align}
			P_\lambda(x-\bm1;\tau) &= \lim_{q\to1}h_\lambda^{\monic,\AM}(x;q,q^\tau), \\
			P_\lambda(x+x^{-1}-\bm2;\tau) &= \lim_{q\to1}h_\lambda^{\monic,\BM}(x;q,q^\tau,q^\alpha), 
		\end{align}
		where $x-\bm1=(x_1-1,\dots,x_n-1)$, and $x+x^{-1}-\bm2 = (x_1+x_1^{-1}-2,\dots,x_n+x_n^{-1}-2)$.
		\item The top degree terms of $h_\lambda^\monic(x)$ is equal to $P_\lambda(x)$ for $\mathcal F=\AJ,\AM$, $P_\lambda(x^2)$ for $\mathcal F=\BJ$, and $P_\lambda(x)+P_\lambda(x^{-1})$ for $\mathcal F=\BM$:
		\begin{align}
			P_\lambda(x;\tau) &= \lim_{r\to\infty} \frac{h_\lambda^{\monic,\AJ}(rx;\tau)}{r^{|\lambda|}}, \\
			P_\lambda(x;q,t) &= \lim_{r\to\infty} \frac{h_\lambda^{\monic,\AM}(rx;q,t)}{r^{|\lambda|}}, \\
			P_\lambda(x^2;\tau) &= \lim_{r\to\infty} \frac{h_\lambda^{\monic,\BJ}(rx;\tau)}{r^{2|\lambda|}}, \\
			P_\lambda(x;q,t) &= \lim_{r\to\infty} \frac{h_\lambda^{\monic,\BM}(rx;q,t)}{r^{|\lambda|}}.
		\end{align}
	\end{enumerate}	
\end{proposition}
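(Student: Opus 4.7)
The plan is to deduce all five parts from Okounkov's combinatorial formulas \cref{eqn:J-comb}--\cref{eqn:BM-comb} by passing to the limit factor-by-factor, after first establishing Part (1), which is the foundation for everything else. For Part (1), I would write $t=q^\tau$ and use the expansion $q^a = 1+a(q-1)+O((q-1)^2)$ so that
\begin{equation*}
\frac{c_\lambda(s;q,q^\tau)}{1-q} = \frac{1-q^{a_\lambda(s)+\tau(l_\lambda(s)+1)}}{1-q} \xrightarrow{q\to1} a_\lambda(s)+\tau(l_\lambda(s)+1) = c_\lambda(s;\tau),
\end{equation*}
and similarly for $c_\lambda'$. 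Taking ratios then gives $\lim_{q\to 1}\psi_T(q,q^\tau)=\psi_T(\tau)$, since each $\psi_T$ is a product of ratios $b_\lambda(s)/b_\mu(s)$.

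For Part (2), I would substitute $x\mapsto q^x$ and $t=q^\tau$ in \cref{eqn:AM-comb} and observe that each factor $q^{x_{T(s)}}-q^{a'_\lambda(s)+\tau(n-T(s)-l'_\lambda(s))}$ divided by $q-1$ tends to $x_{T(s)}-a'_\lambda(s)-(n-T(s)-l'_\lambda(s))\tau$, exactly the corresponding factor in \cref{eqn:AJ-comb}; the $(q-1)^{|\lambda|}$ denominator then accounts for one factor per box. The BJ case is analogous with squared factors absorbing a $(q-1)^{2|\lambda|}$ denominator. The ordinary Jack limit \cref{eqn:limit1} follows from the same argument applied to \cref{eqn:M-comb} and \cref{eqn:J-comb}. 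The unital and binomial-coefficient limits follow by dividing by $H(\lambda)=h_\lambda^\monic(\overline\lambda)$, whose limit is controlled by the same limit of hooklengths from Part (1). For Part (3), the decisive algebraic identities are
\begin{equation*}
(x+\alpha)^2-(C+\alpha)^2 = (x-C)(x+C+2\alpha),
\end{equation*}
which after dividing by $2\alpha$ tends to $x-C$ as $\alpha\to\infty$ (giving BJ $\to$ AJ), and
\begin{equation*}
(ax)+(ax)^{-1}-q^A t^B a-(q^A t^B a)^{-1} = (x-q^A t^B)\bigl(a - a^{-1}(x q^A t^B)^{-1}\bigr),
\end{equation*}
which after dividing by $a$ tends to $x-q^A t^B$ as $a\to\infty$ (giving BM $\to$ AM). The unital and binomial versions again follow from the monic versions by continuity of $H(\lambda)$.

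For Part (4), I would substitute $t=q^\tau$ directly in \cref{eqn:AM-comb} and \cref{eqn:BM-comb} (without rescaling $x$) and let $q\to 1$: each factor $x_{T(s)}-q^{a'_\lambda(s)}t^{n-T(s)-l'_\lambda(s)}$ tends to $x_{T(s)}-1$, and \cref{eqn:M-comb} identifies the resulting sum as $P_\lambda(x-\bm1;\tau)$; the BM case is identical with $x_{T(s)}+x_{T(s)}^{-1}-2$ replacing $x_{T(s)}-1$. For Part (5), I would substitute $x\mapsto rx$ in the combinatorial formulas and keep only the highest-order term in $r$ from each factor: in the AJ, AM, and BM formulas, each box contributes $rx_{T(s)}$ to leading order, totaling $r^{|\lambda|}$, while in the BJ formula each box contributes $r^2 x_{T(s)}^2$, totaling $r^{2|\lambda|}$. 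Dividing by the appropriate power of $r$ and taking $r\to\infty$ produces $P_\lambda(x)$, $P_\lambda(x^2)$, or $P_\lambda(x)$ respectively, via \cref{eqn:J-comb} and \cref{eqn:M-comb}.

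The main obstacle is not conceptual but bookkeeping: verifying the factorization identities for Part (3) and counting the exact power of $q-1$, $\alpha$, $a$, or $r$ that each factor contributes, to ensure the global normalization works out. The only genuinely subtle point is that all limits for the unital polynomials, and hence for the binomial coefficients, are derived from the monic limits by continuity of $H(\lambda)$, which in turn reduces to Part (1); this chaining must be kept consistent across all four families.
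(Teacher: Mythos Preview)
Your proposal is correct and follows essentially the same approach as the paper, which simply states that ``most are clear by definition'' and refers to \cref{prop:H} for the unital cases. You have supplied the factor-by-factor details that the paper omits, but the underlying strategy---apply the combinatorial formulas \cref{eqn:J-comb}--\cref{eqn:BM-comb} and pass to the limit termwise, then handle the unital and binomial-coefficient versions via the limiting behavior of $H(\lambda)$---is identical.
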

\begin{proof}
	Most are clear by definition. 
	For the formulas concerning the unital normalization, see \cref{prop:H} for the normalizing factor $H(\lambda)={h_\lambda^\monic(x)}/{h_\lambda(x)}$.
\end{proof}

As mentioned in the introduction, the interpolation polynomials satisfy the following property.
\begin{proposition}[Extra Vanishing Property]\label{thm:extra}
	The interpolation polynomial $h_\mu$ vanishes at $\overline\lambda$ unless $\lambda$ contains $\mu$.
\end{proposition}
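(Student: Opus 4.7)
The plan is to prove extra vanishing directly from the combinatorial formulas \eqref{eqn:AJ-comb}--\eqref{eqn:BM-comb}. Since $h_\mu = h_\mu^\monic/H(\mu)$ with $H(\mu) = h_\mu^\monic(\overline\mu) \neq 0$ (as $h_\mu(\overline\mu) = 1$), it suffices to show $h_\mu^\monic(\overline\lambda) = 0$ for $\lambda \not\supseteq \mu$. The strategy is to establish the stronger statement that every individual term in the tableau sum vanishes at $x = \overline\lambda$.

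The first step is to analyze, for each family, when a single factor $f_s$ at a box $s = (i, j) \in \mu$ with $T(s) = m$ evaluates to zero in $\mathbb F$ at $x_m = \overline\lambda_m$. A short substitution gives, for $\AJ$, $f_s(\overline\lambda_m) = \lambda_m - (j-1) + (i-1)\tau$; for $\BJ$, a difference of squares factorization yields this same linear factor times an ``extra'' factor containing $2\alpha$; for $\AM$, one obtains $t^{n-m}\bigl(q^{\lambda_m} - q^{j-1}t^{1-i}\bigr)$; and for $\BM$, using $x_m + x_m^{-1}$ at $x_m = a q^{\lambda_m} t^{n-m}$, a short manipulation factors the result as $(P - Q)\bigl(a - a^{-1}(PQ)^{-1}\bigr)$ where $P = q^{\lambda_m}t^{n-m}$ and $Q = q^{j-1}t^{n-m-i+1}$. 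By algebraic independence of the parameters, the ``extra'' factors in $\BJ$ and $\BM$ cannot vanish in $\mathbb F$ (their $\alpha$-linear or $a^2$-leading terms have nonzero integer coefficients), so in every family $f_s(\overline\lambda_m) = 0$ precisely when $i = 1$ and $\lambda_m = j - 1$.

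The second step is purely combinatorial: for any RT $T$ of shape $\mu$ and any $\lambda \not\supseteq \mu$, one must exhibit an index $m$ and a column $j = \lambda_m + 1$ with $T(1, j) = m$. Writing $\mu^{(m)} \coloneqq \{s \in \mu : T(s) > m\}$ and $a_m \coloneqq \mu^{(m)}_1$, the condition $T(1, j) = m$ is equivalent to $a_m < j \leq a_{m-1}$, so the target becomes the existence of $m \in [1, n]$ with $a_m \leq \lambda_m < a_{m-1}$. Column-strictness forces $T(1, j) \geq \mu_j'$, which yields $a_{m-1} \geq \mu_m$; hence the upper bound $\lambda_m < a_{m-1}$ holds at any $k$ with $\lambda_k < \mu_k$, so the set $S \coloneqq \{m : \lambda_m < a_{m-1}\}$ is nonempty. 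Taking $m^* = \max S$, one considers two cases: if $m^* = n$ then $a_n = 0 \leq \lambda_n$; otherwise maximality gives $\lambda_{m^* + 1} \geq a_{m^*}$, and monotonicity of $\lambda$ yields $\lambda_{m^*} \geq \lambda_{m^* + 1} \geq a_{m^*}$, so $a_{m^*} \leq \lambda_{m^*} < a_{m^* - 1}$ as required.

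The main obstacle will be the $\BJ$ and $\BM$ single-factor analysis, where the factor is more complex than a linear polynomial: one must spot the correct factorization and invoke algebraic independence of the ``extra'' parameter $\alpha$ or $a$ to rule out spurious vanishing of the secondary factor. Once this is in hand, combining the two steps shows that every term in the combinatorial sum has at least one factor equal to zero in $\mathbb F$, which forces $h_\mu^\monic(\overline\lambda) = 0$ and hence $h_\mu(\overline\lambda) = 0$.
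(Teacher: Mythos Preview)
Your argument is correct. Both the single-factor analysis and the combinatorial lemma go through as written; in particular, the choice $m^*=\max S$ together with $a_{m-1}\ge\mu_m$ (from column-strictness) cleanly produces a first-row box $(1,\lambda_{m^*}+1)$ with $T$-value $m^*$, which kills one factor in every term of the tableau sum.

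However, this is not the route the paper takes. The paper does not give a direct proof of \cref{thm:extra}; it cites earlier references and then deduces it from the weighted sum formula \eqref{eqn:b-wtsum}: once the recursion of \cref{thm:b-rec} is established from the Pieri rule, the formula $b_{\lambda\mu}=\sum_{\bm\zeta\in\mathfrak C_{\lambda\mu}}\wt(\bm\zeta)\prod a_{\bm\zeta_i\bm\zeta_{i+1}}$ holds for all $\lambda,\mu$, and if $\lambda\not\supseteq\mu$ the set $\mathfrak C_{\lambda\mu}$ of covering chains is empty, so $b_{\lambda\mu}=0$. That argument is uniform across all four families (no case analysis) but relies on the Pieri/recursion machinery developed in \cref{sec:rec&wt}. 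Your proof is more elementary and self-contained --- it needs only the combinatorial formulas \eqref{eqn:AJ-comb}--\eqref{eqn:BM-comb} --- at the cost of handling the four families separately and invoking algebraic independence of the extra parameter in types $\BJ$ and $\BM$. Your combinatorial step (finding the ``zero box'' in the first row) is also close in spirit to \cref{lem:RTvanishing,lem:J-nonneg}, which the paper uses later for positivity rather than for vanishing.
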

The property is first proved in \cite[Theorem~5.2]{KS96} for $\mathcal F=\AJ$, in \cite[Theorem~4.5]{Knop97} for $\mathcal F=\AM$ (in the non-symmetric case, while the symmetric case can be derived via symmetrization).
The property also follows from the weighted sum formula \cref{eqn:b-wtsum} below.

\section{Recursion and Weighted Sum Formulas}\label{sec:rec&wt}
\subsection{Binomial Coefficients}
In 2011, the second author derived some recursion formulas and weighted sum formulas for the binomial coefficients for type $A$ interpolation Jack and Macdonald polynomials, respectively, in \cite{Sahi-Jack,Sahi-Mac}. (The treatment there also works for the non-symmetric cases.)
We now generalize the arguments and the results to type $BC$.

A key relation, the \textbf{Pieri rule}, was first observed in \cite[Section 9]{OO-schur} for shifted Schur polynomials (which corresponds to our $\AJ$ with $\tau=1$), and in \cite[Section 5]{OO97} for $\mathcal F=\AJ$.
Let $\varepsilon_1=(1,0,\dots,0)\in\mathcal P_n$. 
\begin{lemma}[Pieri Rule]\label{lem:Pieri}
	Fix $\mu\in\mathcal P_n$, then 
	\begin{equation}\label{eqn:h_Pieri}
		\(h_{\varepsilon_1}(x)-h_{\varepsilon_1}(\overline\mu)\) \cdot h_\mu(x) = \sum_{\nu\cover\mu}\(h_{\varepsilon_1}(\overline \nu)-h_{\varepsilon_1}(\overline\mu)\)a_{\nu\mu}h_\nu(x).
	\end{equation}
\end{lemma}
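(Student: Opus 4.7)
The plan is to prove the identity by showing both sides are symmetric polynomials of the same controlled degree and that they agree on a sufficiently large set of the interpolation nodes $\overline\lambda$; the result then follows from the uniqueness in \cref{prop:interpolation}.

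First I would verify the degree bound. For $\mathcal F = \AJ, \AM, \BM$ the polynomial $h_{\varepsilon_1}$ lies in $\Lambda^1$, while for $\mathcal F = \BJ$ it lies in $\Lambda^1$ in our indexing convention (i.e., degree at most $2$ as a usual polynomial). In all cases $h_\mu \in \Lambda^{|\mu|}$, and hence both $f(x) := (h_{\varepsilon_1}(x) - h_{\varepsilon_1}(\overline\mu)) h_\mu(x)$ and the right-hand side lie in $\Lambda^{|\mu|+1}$. By \cref{prop:interpolation}, it then suffices to check that the two sides agree at $x = \overline\lambda$ for every $\lambda \in \mathcal P_n^{|\mu|+1}$.

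Next I would carry out a short case analysis using the defining vanishing \cref{eqn:def-vanishing} and the extra vanishing \cref{thm:extra}. Writing
\begin{align*}
\text{LHS}(\overline\lambda) &= \(h_{\varepsilon_1}(\overline\lambda) - h_{\varepsilon_1}(\overline\mu)\)\, b_{\lambda\mu}, \\
\text{RHS}(\overline\lambda) &= \sum_{\nu\cover\mu} \(h_{\varepsilon_1}(\overline\nu) - h_{\varepsilon_1}(\overline\mu)\) a_{\nu\mu}\, b_{\lambda\nu},
\end{align*}
I would argue:
\emph{(i)} If $|\lambda| \leqslant |\mu|$ and $\lambda \neq \mu$: then $b_{\lambda\mu} = 0$ by \cref{eqn:def-vanishing}, and every $\nu$ in the sum has $|\nu| = |\mu|+1 > |\lambda|$, so $b_{\lambda\nu} = 0$ as well; both sides vanish.
\emph{(ii)} If $\lambda = \mu$: the prefactor $h_{\varepsilon_1}(\overline\lambda) - h_{\varepsilon_1}(\overline\mu)$ on the left is $0$, and on the right each $b_{\mu\nu}$ vanishes because $|\mu| < |\nu|$.
\emph{(iii)} If $|\lambda| = |\mu|+1$ and $\lambda \not\supseteq \mu$: then $b_{\lambda\mu} = 0$ by \cref{thm:extra}; in the sum, $|\lambda| = |\nu|$ forces $b_{\lambda\nu} = \delta_{\lambda\nu}$, but no $\nu \cover \mu$ equals $\lambda$ (since that would require $\lambda \supseteq \mu$), so RHS vanishes as well.
\emph{(iv)} If $\lambda \cover \mu$: then $b_{\lambda\nu} = \delta_{\lambda\nu}$ selects the single term $\nu = \lambda$ on the right, and the definition of $a_{\lambda\mu}$ gives $b_{\lambda\mu} = a_{\lambda\mu}$; both sides equal $(h_{\varepsilon_1}(\overline\lambda) - h_{\varepsilon_1}(\overline\mu))\, a_{\lambda\mu}$.

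I do not expect a genuine obstacle here; the proof is really bookkeeping once the right framework is in place. The one subtle point that deserves care is the uniform degree count across the four families, especially for $\mathcal F = \BJ$ where ``degree $d$'' refers to total degree $2d$; I would state this carefully and invoke $\Lambda^{|\mu|+1}$ in the family-independent sense introduced in \cref{sec:pre-notation} so that \cref{prop:interpolation} applies verbatim in every case. With that in hand, the case analysis above completes the proof of \cref{eqn:h_Pieri} uniformly for all $\mathcal F \in \{\AJ, \BJ, \AM, \BM\}$.
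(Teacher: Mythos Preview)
Your proposal is correct and follows exactly the paper's approach: both sides lie in $\Lambda^{|\mu|+1}$, so by \cref{prop:interpolation} it suffices to check agreement at $\overline\lambda$ for $\lambda\in\mathcal P_n^{|\mu|+1}$, which the paper dismisses as ``easily seen'' while you spell out the case analysis. The only mild remark is that your case (iii) invokes the extra vanishing property \cref{thm:extra}; this is consistent with the paper's organization (extra vanishing is stated as a known Proposition before the Pieri lemma), but be aware that the paper later also derives extra vanishing from the weighted sum formula, so for types $\BJ$ and $\BM$ one should regard \cref{thm:extra} as established independently in the cited literature rather than via the corollary in \cref{sec:rec&wt}.
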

\begin{proof}
	It is clear from the definition that both sides of \cref{eqn:h_Pieri} have degree (at most) $\deg h_{\varepsilon_1}+\deg h_\mu$, hence, lies in $\Lambda^{|\mu|+1}$.
	By the uniqueness of interpolation (Proposition~\ref{prop:interpolation}), it suffices to check that the two sides have the same evaluations at $\overline\lambda$ for $\lambda\in\mathcal P_n^{|\mu|+1}$, which is easily seen.
\end{proof}
\begin{remark}\label{rmk:norm}
	\cref{eqn:h_Pieri} can be written as
	\begin{equation}
		\Bigl(\norm{x}-\norm{\overline\mu}\Bigr)\cdot h_\mu(x) = \sum_{\lambda\cover\mu} \Bigl(\norm{\overline\lambda}-\norm{\overline\mu}\Bigr) a_{\lambda\mu}h_\lambda(x),
	\end{equation}
	where the ``norm'' $\norm{x} $ is the top degree terms of $h_{\varepsilon_1}^\monic(x)$, 
	because the equation is invariant under translation and scalar multiplication of the norm $\norm{\cdot}$. 
\end{remark}

Fix a total order on $\mathcal P_n$ that is compatible with the size function, i.e., $|\lambda|\leqslant|\mu|$ whenever $\lambda$ precedes $\mu$.

Write
\begin{align}
	A=\(a_{\lambda\mu}\), \quad B=\(b_{\lambda\mu}\), \quad Z=\(\, \norm{\overline\mu}\delta_{\lambda\mu}\)
\end{align}
for the infinite matrices where $\lambda,\mu\in\mathcal P_n$.
Then $B$ is unitriangular by \cref{eqn:def-vanishing}, and hence invertible.
Denote the entry of its inverse matrix by $b_{\lambda\mu}'$, i.e., $B^{-1}=\(b_{\lambda\mu}'\)$. We call $b_{\lambda\mu}'$ the \textbf{inverse binomial coefficients}.
\begin{theorem}[Recursion for Binomial Coefficients]\label{thm:b-rec}
\phantom{}
	\begin{enumerate}
		\item The following recursion characterizes $b_{\lambda\mu}$:
		\begin{align}\label{eqn:b-recursion}
			(i)\ b_{\lambda\lambda}=1;\quad 
			(ii)\ \Bigl(\norm{\overline\lambda}-\norm{\overline\mu}\Bigr) b_{\lambda\mu} = \sum_{\nu\cover\mu} b_{\lambda\nu}\Bigl(\norm{\overline\nu}-\norm{\overline\mu}\Bigr)a_{\nu\mu}, 	\quad |\lambda|>|\mu|.
		\end{align}
		\item The following recursion characterizes $b_{\lambda\mu}'$:
		\begin{align}
			(i)\ b_{\lambda\lambda}'=1;\quad 
			(ii)\ \Bigl(\norm{\overline\lambda}-\norm{\overline\mu}\Bigr) b_{\lambda\mu}' = \sum_{\nu\coveredby\lambda} a_{\lambda\nu}\Bigl(\norm{\overline \nu}-\norm{\overline\lambda}\Bigr)b_{\nu\mu}',	\quad |\lambda|>|\mu|.
		\end{align}
		\item The matrices $A,B,Z$ satisfy the commutation relations: 
		\begin{equation}\label{eqn:commutation}
			(i)\ [Z,B]=B[Z,A];\quad (ii)\ [Z,B^{-1}]=-[Z,A]B^{-1}.
		\end{equation}
	\end{enumerate}
\end{theorem}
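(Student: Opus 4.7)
The plan is to derive part (1) directly from the Pieri rule (\cref{lem:Pieri}), repackage it as the matrix identity (3)(i), and then extract (3)(ii) and part (2) via elementary matrix manipulation.

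First I would evaluate the Pieri identity \eqref{eqn:h_Pieri} at $x=\overline\lambda$ and apply the defining relation $b_{\lambda\nu}=h_\nu(\overline\lambda)$. This yields
\begin{equation*}
\bigl(h_{\varepsilon_1}(\overline\lambda)-h_{\varepsilon_1}(\overline\mu)\bigr)\,b_{\lambda\mu} \;=\; \sum_{\nu\cover\mu}\bigl(h_{\varepsilon_1}(\overline\nu)-h_{\varepsilon_1}(\overline\mu)\bigr)\,a_{\nu\mu}\,b_{\lambda\nu}.
\end{equation*}
By \cref{rmk:norm}, $h_{\varepsilon_1}$ differs from $\norm{\cdot}$ only by a nonzero scalar and an additive constant, both of which cancel from the differences, so the displayed identity is exactly (1)(ii). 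Combined with $b_{\lambda\lambda}=1$, this is the asserted recursion. To see that the recursion actually characterizes $b_{\lambda\mu}$, one needs $\norm{\overline\lambda}-\norm{\overline\mu}\neq 0$ whenever $|\lambda|>|\mu|$; this is immediate from the formulas in \cref{table} (for $\AJ$ the difference is $|\lambda|-|\mu|$; for $\BJ$ the $\alpha$-coefficient of the difference is $2(|\lambda|-|\mu|)$; for $\AM$ and $\BM$ the monomials $q^{\lambda_i}t^{n-i}$ are linearly independent across distinct indexings), so we can solve for $b_{\lambda\mu}$ by induction on $|\lambda|-|\mu|$.

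Next I would repackage (1)(ii) as the matrix identity (3)(i). The $(\lambda,\mu)$-entry of $[Z,B]$ is $(\norm{\overline\lambda}-\norm{\overline\mu})\,b_{\lambda\mu}$; since $a_{\nu\mu}$ is supported on $\nu\cover\mu$, the $(\lambda,\mu)$-entry of $B[Z,A]$ is $\sum_{\nu\cover\mu} b_{\lambda\nu}(\norm{\overline\nu}-\norm{\overline\mu})\,a_{\nu\mu}$. These entries agree for $|\lambda|>|\mu|$ by (1)(ii); for $|\lambda|\leqslant|\mu|$ both vanish, the left via $b_{\lambda\mu}=\delta_{\lambda\mu}$ and the right via the extra vanishing property $b_{\lambda\nu}=0$ for $|\nu|>|\lambda|$ (noting that $\nu\cover\mu$ forces $|\nu|=|\mu|+1>|\lambda|$). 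Hence $[Z,B]=B[Z,A]$.

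Finally, multiplying this on the left by $B^{-1}$ gives $B^{-1}ZB=Z+[Z,A]$, so $B^{-1}Z=ZB^{-1}+[Z,A]B^{-1}$ and therefore $[Z,B^{-1}]=ZB^{-1}-B^{-1}Z=-[Z,A]B^{-1}$, which is (3)(ii). Taking the $(\lambda,\mu)$-entry and using that $a_{\lambda\nu}$ is supported on $\nu\coveredby\lambda$ yields (2)(ii); the initial condition $b'_{\lambda\lambda}=1$ is immediate from the unitriangularity of $B$. The only step requiring any real thought is the case-by-case nonvanishing of $\norm{\overline\lambda}-\norm{\overline\mu}$ needed to invert the recursion in all four families, but the explicit shifting functions in \cref{table} make this routine.
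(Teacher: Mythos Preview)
Your proof is correct and follows essentially the same approach as the paper: evaluate the Pieri rule at $\overline\lambda$ to get (1)(ii), then pass to the matrix identities (3)(i)--(ii) by reading off entries and a short commutator manipulation, and finally extract (2) from (3)(ii). You supply a bit more detail than the paper does (the case-by-case nonvanishing of $\norm{\overline\lambda}-\norm{\overline\mu}$ and the check that both sides of (3)(i) vanish when $|\lambda|\leqslant|\mu|$), but the strategy is the same.
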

\begin{proof}
	We borrow the proof from \cite{Sahi-Mac}.
	
	It is clear that (1)$\iff$(3.i) $\iff$(3.ii)$\iff$(2): the first and last equivalences follow by looking at the $(\lambda,\mu)$-entry of \cref{eqn:commutation}, while the second equivalence is a simple calculation. 
	(There is a typo in \cite{Sahi-Mac} for this part, which we fix now.)
	$$[Z,B^{-1}] = ZB^{-1}-B^{-1} Z = -B^{-1}(ZB-BZ)B^{-1}=-B^{-1} [Z,B]B^{-1} \xlongequal{(3.i)} -[Z,A]B^{-1}.$$
	
	Now, it suffices to prove (1):
	(1.i) follows from the interpolation condition \cref{eqn:def-vanishing}; for (1.ii), evaluate the Pieri rule \cref{eqn:h_Pieri} at $\overline\lambda$; \cref{eqn:b-recursion} characterizes $b_{\lambda\mu}$ by induction on $|\lambda|-|\mu|$.
\end{proof}
\begin{theorem}[\cref{thm:A}, Weighted Sum Formula for Binomial Coefficients]\label{thm:A1}
	Assume $\lambda\supseteq\mu$, and $k=|\lambda|-|\mu|$.
	\begin{enumerate}
		\item The binomial coefficient admits the following weighted sum formula
		\begin{align}\label{eqn:b-wtsum}
			b_{\lambda\mu} = \sum_{\bm \zeta\in\mathfrak C_{\lambda\mu}} \wt(\bm \zeta)\prod_{i=0}^{k-1}a_{\bm \zeta_i\bm \zeta_{i+1}},
		\end{align}
		where the weight $\wt(\bm \zeta)$ is defined as
		\begin{align}\label{eqn:b-weight}
			\wt(\bm \zeta) \coloneqq \prod_{i=0}^{k-1} \frac{\norm{\overline{\bm \zeta_i}}-\norm{\overline{\bm \zeta_{i+1}}}}{\norm{\overline{\bm \zeta_0}}-\norm{\overline{\bm \zeta_{i+1}}}}.
		\end{align}
		\item The inverse binomial coefficient admits the following weighted sum formula
		\begin{align}
			b_{\lambda\mu}' = \sum_{\bm \zeta\in\mathfrak C_{\lambda\mu}} \wt'(\bm \zeta)\prod_{i=0}^{k-1}a_{\bm \zeta_i\bm \zeta_{i+1}}
		\end{align}
		where the weight $\wt'(\bm \zeta)$ is defined as
		\begin{align}
			\wt'(\bm \zeta) \coloneqq (-1)^k\prod_{i=0}^{k-1} \frac{\norm{\overline{\bm \zeta_{i+1}}}-\norm{\overline{\bm \zeta_i}}}{\norm{\overline{\bm \zeta_k}}-\norm{\overline{\bm \zeta_i}}}.
		\end{align}
	\end{enumerate}
\end{theorem}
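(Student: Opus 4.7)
The plan is to prove both parts by induction on $k=|\lambda|-|\mu|$, starting from the recursions in \cref{thm:b-rec}. Part (1) is an induction that extends chains at the bottom (near $\mu$), while Part (2) extends them at the top (near $\lambda$), using the respective recursions $(ii)$ in \cref{thm:b-rec}(1) and \cref{thm:b-rec}(2). The base case $k=0$ in both parts forces $\lambda=\mu$, and the unique chain $\bm\zeta=(\lambda)$ contributes an empty product equal to $1$, matching $b_{\lambda\lambda}=b_{\lambda\lambda}'=1$.

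For Part (1), in the inductive step, I would divide the recursion
\[
\Bigl(\norm{\overline\lambda}-\norm{\overline\mu}\Bigr) b_{\lambda\mu} = \sum_{\nu\cover\mu} b_{\lambda\nu}\Bigl(\norm{\overline\nu}-\norm{\overline\mu}\Bigr)a_{\nu\mu}
\]
through by $\norm{\overline\lambda}-\norm{\overline\mu}$, substitute the inductive formula for $b_{\lambda\nu}$ (a sum over chains $\bm\zeta\in\mathfrak C_{\lambda\nu}$ of length $k-1$), and re-index: every chain $\bm\zeta^*\in\mathfrak C_{\lambda\mu}$ arises uniquely as a chain $\bm\zeta$ from $\lambda$ to some $\nu\cover\mu$ extended by the step $\nu\to\mu$. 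The adjacent factors concatenate correctly as $\bigl(\prod_{i=0}^{k-2} a_{\bm\zeta_i\bm\zeta_{i+1}}\bigr)\cdot a_{\nu\mu}$. The key identity to verify is that the weights align:
\[
\wt(\bm\zeta)\cdot\frac{\norm{\overline\nu}-\norm{\overline\mu}}{\norm{\overline\lambda}-\norm{\overline\mu}} = \wt(\bm\zeta^*).
\]
This is immediate from the definition \cref{eqn:b-weight}: the newly introduced factor is exactly the $i=k-1$ term in $\wt(\bm\zeta^*)$, since $\bm\zeta_{k-1}^*=\nu$, $\bm\zeta_k^*=\mu$, and $\bm\zeta_0^*=\lambda$; the remaining factors for $i=0,\dots,k-2$ coincide with $\wt(\bm\zeta)$.

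For Part (2), the same strategy applies but using the recursion $\bigl(\norm{\overline\lambda}-\norm{\overline\mu}\bigr) b_{\lambda\mu}' = \sum_{\nu\coveredby\lambda} a_{\lambda\nu}\bigl(\norm{\overline\nu}-\norm{\overline\lambda}\bigr)b_{\nu\mu}'$; each chain $\bm\zeta^*\in\mathfrak C_{\lambda\mu}$ factors uniquely as $\lambda\cover\nu$ followed by a chain $\bm\eta\in\mathfrak C_{\nu\mu}$. Substituting the inductive formula for $b_{\nu\mu}'$ gives an extra factor
\[
\frac{\norm{\overline\nu}-\norm{\overline\lambda}}{\norm{\overline\lambda}-\norm{\overline\mu}}\cdot(-1)^{k-1},
\]
which, upon rewriting $\norm{\overline\lambda}-\norm{\overline\mu}=-(\norm{\overline\mu}-\norm{\overline\lambda})$, produces the $i=0$ factor of $\wt'(\bm\zeta^*)$ together with the sign flip $(-1)^{k-1}\mapsto(-1)^k$. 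The remaining factors of $\wt'(\bm\eta)$ match the $i=1,\dots,k-1$ factors of $\wt'(\bm\zeta^*)$ (after the index shift $\bm\eta_j=\bm\zeta_{j+1}^*$), because the denominators $\norm{\overline\mu}-\norm{\overline{\bm\zeta_i^*}}$ only depend on the endpoint $\mu=\bm\eta_{k-1}=\bm\zeta_k^*$.

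I do not anticipate a serious obstacle: the argument is essentially bookkeeping once the chain bijection is set up. The one place to be careful is the sign in Part (2) — tracking that the division by $\norm{\overline\lambda}-\norm{\overline\mu}$ (the ``wrong'' orientation compared with $\norm{\overline\mu}-\norm{\overline\lambda}$ in $\wt'$) is precisely what promotes $(-1)^{k-1}$ to $(-1)^k$. A secondary check is that the recursions in \cref{thm:b-rec} uniquely determine $b_{\lambda\mu}$ and $b_{\lambda\mu}'$ by induction on $k$ (via condition $(i)$ and the fact that $\norm{\overline\lambda}\neq\norm{\overline\mu}$ when $\lambda\neq\mu$, which holds in all four families by the explicit formulas for $\overline\lambda$ and $\norm{\cdot}$ in \cref{table}), so matching both recursions is enough to conclude.
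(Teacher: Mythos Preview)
Your proposal is correct and follows essentially the same approach as the paper's proof: both verify that the weighted sum satisfies the characterizing recursion of \cref{thm:b-rec}, with the same key weight identity $\wt(\bm\zeta^*)=\wt(\bm\zeta)\cdot\frac{\norm{\overline\nu}-\norm{\overline\mu}}{\norm{\overline\lambda}-\norm{\overline\mu}}$ obtained by peeling off the last step of the chain. The paper only spells out Part (1) and declares Part (2) ``similar''; your treatment of Part (2), including the sign bookkeeping, is accurate and fills in that gap.
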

\begin{proof}
	We will only prove for binomial coefficients as the other case is similar.
	Let $\overline b_{\lambda\mu}$ temporarily denote the sum in \cref{eqn:b-wtsum}.
	By \cref{thm:b-rec}, it suffices to verify that $\overline b_{\lambda\mu}$ satisfies the recursion \cref{eqn:b-recursion}. 
	Clearly $\overline b_{\lambda\lambda}=1$ since the sum involves only the single chain $\bm \zeta=(\lambda)$ and the weight reduces to 1.
	For the second part, we observe that 
	\begin{equation*}
		\wt(\bm \zeta) = \wt(\bm \zeta') \cdot \frac{\norm{\overline{\bm \zeta_{k-1}}}-\norm{\overline\mu}}{\norm{\overline\lambda}-\norm{\overline\mu}}, \quad\text{where}\quad \bm \zeta' = (\bm \zeta_0,\bm \zeta_1,\dots,\bm \zeta_{k-1}).
	\end{equation*}
	Therefore, collecting the terms in \cref{eqn:b-wtsum} with $\bm \zeta_{k-1}=\nu$, we have
	\begin{align*}
		\overline b_{\lambda\mu} 
		= \sum_{\nu\cover\mu} \(\sum_{\bm \zeta'\in\mathfrak C_{\lambda\nu}} \wt(\bm \zeta') \prod_{i=0}^{k-2} a_{\bm \zeta_i\bm \zeta_{i+1}}\) \frac{\norm{\overline \nu}-\norm{\overline \mu}}{\norm{\overline \lambda}-\norm{\overline \mu}} a_{\nu\mu}	
		= \sum_{\nu \cover \mu} \overline b_{\lambda\nu} \frac{\norm{\overline \nu}-\norm{\overline \mu}}{\norm{\overline \lambda}-\norm{\overline \mu}} a_{\nu\mu}.\tag*{\qedhere}
	\end{align*}
\end{proof}
\begin{corollary}[Extra Vanishing Property]
	The binomial coefficient $b_{\lambda\mu}$ and the inverse binomial coefficient $b_{\lambda\mu}'$ are 0 unless $\lambda\supseteq\mu$.
\end{corollary}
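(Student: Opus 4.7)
The plan is to deduce both statements by induction on $|\lambda|-|\mu|$, using the recursion of \cref{thm:b-rec} together with the transitivity of the containment order. The key observation is that in each recursion step the partitions appearing on the right-hand side either sit above $\mu$ or below $\lambda$, so that the hypothesis $\lambda\not\supseteq\mu$ propagates via transitivity to all summands.

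For $b_{\lambda\mu}$, I would first dispose of the cases $|\lambda|\leqslant|\mu|$. If $|\lambda|<|\mu|$, the defining interpolation condition \cref{eqn:def-vanishing} gives $b_{\lambda\mu}=h_\mu(\overline\lambda)=0$; if $|\lambda|=|\mu|$ with $\lambda\not\supseteq\mu$, then $\lambda\neq\mu$ and the same condition gives $b_{\lambda\mu}=\delta_{\lambda\mu}=0$. For the inductive step, assume $|\lambda|>|\mu|$ and $\lambda\not\supseteq\mu$, and apply the recursion \cref{eqn:b-recursion}. Each $\nu$ in the sum satisfies $\nu\supseteq\mu$ (from $\nu\cover\mu$); if we had $\lambda\supseteq\nu$, transitivity would force $\lambda\supseteq\mu$, contradicting the hypothesis. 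Hence $\lambda\not\supseteq\nu$ for every $\nu$ on the right, and since $|\lambda|-|\nu|<|\lambda|-|\mu|$, the inductive hypothesis yields $b_{\lambda\nu}=0$. The right-hand side therefore vanishes, and dividing by the nonzero prefactor $\norm{\overline\lambda}-\norm{\overline\mu}$ gives $b_{\lambda\mu}=0$.

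The inverse case is parallel. The base cases follow from the fact that $B$ is lower-triangular in the size grading with identity on the diagonal blocks (by \cref{eqn:def-vanishing}), so $B^{-1}$ inherits the same shape, giving $b_{\lambda\mu}'=0$ for $|\lambda|<|\mu|$ and $b_{\lambda\mu}'=\delta_{\lambda\mu}$ for $|\lambda|=|\mu|$. For $|\lambda|>|\mu|$ with $\lambda\not\supseteq\mu$, I would apply the corresponding recursion of \cref{thm:b-rec}(2); each $\nu$ in that sum satisfies $\lambda\supseteq\nu$, so if we had $\nu\supseteq\mu$ then $\lambda\supseteq\mu$ by transitivity, a contradiction. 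Thus $\nu\not\supseteq\mu$ for every summand, the inductive hypothesis gives $b_{\nu\mu}'=0$, and the conclusion follows as before.

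There is essentially no obstacle — the argument is a bookkeeping induction. The only point to check (immediate from \cref{table} in each of the four families) is that $\norm{\overline\lambda}-\norm{\overline\mu}$ is a nonzero element of $\mathbb F$ whenever $|\lambda|>|\mu|$, which is what makes the recursion solvable and is already implicit in the characterization in \cref{thm:b-rec}.
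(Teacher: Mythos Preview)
Your argument is correct, but it takes a different route from the paper. The paper's proof is a one-liner using the weighted sum formula of \cref{thm:A1}: since both $b_{\lambda\mu}$ and $b_{\lambda\mu}'$ are expressed as sums over the set $\mathfrak C_{\lambda\mu}$ of covering chains from $\lambda$ to $\mu$, and this set is empty when $\lambda\not\supseteq\mu$, both quantities vanish immediately. You instead bypass the weighted sum formula and work directly with the recursions of \cref{thm:b-rec}, running an induction on $|\lambda|-|\mu|$ and using transitivity of containment to propagate the hypothesis $\lambda\not\supseteq\mu$ through each summand. Your approach is slightly more self-contained in that it does not require the explicit closed form of \cref{thm:A1}; on the other hand, since the corollary in the paper is placed right after that theorem, the paper's proof is the natural and shortest deduction at that point in the exposition.
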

\begin{proof}
	If $\lambda$ does not contain $\mu$, then $\mathfrak C_{\lambda\mu}$ is empty, hence $b_{\lambda\mu}=0$ and $b_{\lambda\mu}'=0$.
\end{proof}
We would like to point out that in the case of $\AJ$, the norm $\norm{\overline\lambda}$ is simply $|\lambda|$, hence the weight $\wt(\bm\zeta)=\frac{1}{k!}$ is independent of $\bm\zeta$. Sahi \cite{Sahi-Jack} shows that $B=\exp(A)$ and $b_{\lambda\mu}' = (-1)^{|\lambda|-|\mu|}b_{\lambda\mu}$. 
Such simple relations fail in other cases.

\subsection{Recursion for Littlewood--Richardson Coefficients}
The results in this subsection are again known in type $A$ in \cite{Sahi-Jack,Sahi-Mac}. We generalize them to type $BC$.

For any $p\in\Lambda$, one can define the \textbf{structure coefficient} (or, generalized Littlewood--Richardson coefficient) $c^\lambda_\mu(p)$ by the product expansion
\begin{align}\label{eqn:h_LR_p}
	p(x)h_\mu(x) = \sum_\lambda c^\lambda_\mu(p) h_\lambda(x).
\end{align}
Define matrices $C=C(p)\coloneqq\(c^\lambda_\mu(p)\)_{\lambda,\mu}$ and $D=D(p)\coloneqq\(p(\overline\mu)\delta_{\lambda\mu}\)$.

\begin{theorem}\label{thm:LR-rec-p}\phantom{}
	\begin{enumerate}
		\item The following recursion characterizes $c^\lambda_\mu(p)$:
		\begin{align}\label{eqn:LR-recursion-p}
			\begin{split}
				(i)&\ c^\lambda_\lambda(p) = p(\overline\lambda);
				\\(ii)&\ \text{  if } |\lambda|>|\mu|,
				\\&\ \Bigl(\norm{\overline\lambda}-\norm{\overline\mu}\Bigr) c^\lambda_\mu(p) = \sum_{\zeta \cover\mu} c^\lambda_\zeta(p)\Bigl(\norm{\overline \zeta}-\norm{\overline\mu}\Bigr) a_{\zeta\mu} 
				 -\sum_{\zeta \coveredby\lambda} \Bigl(\norm{\overline\lambda}-\norm{\overline \zeta}\Bigr) a_{\lambda\zeta}c^\zeta_\mu(p).
			\end{split}
		\end{align}
		\item The matrices $C$ and $D$ satisfy:
		\begin{align}\label{eqn:CD}
			(i)\ C=B^{-1} DB;\quad (ii)\ [Z,C] = [C,[Z,A]].
		\end{align}
		\item The structure coefficient admits the following formula
		\begin{align}
			c^\lambda_\mu(p) = \sum_{\lambda\supseteq\zeta\supseteq\mu} b_{\lambda\zeta}'b_{\zeta\mu} p(\overline\zeta).
		\end{align}
	\end{enumerate}
\end{theorem}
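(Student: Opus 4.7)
The plan is to derive all three parts from the single matrix identity $C = B^{-1} D B$ in (2.i), which in turn comes directly from evaluating the defining product expansion at interpolation points. Substituting $x = \overline\nu$ into $p(x) h_\mu(x) = \sum_\lambda c^\lambda_\mu(p) h_\lambda(x)$ gives $p(\overline\nu)\, b_{\nu\mu} = \sum_\lambda b_{\nu\lambda}\, c^\lambda_\mu(p)$, which is precisely the $(\nu,\mu)$-entry of $DB = BC$. Since $B$ is unitriangular (and therefore invertible), this rearranges to $C = B^{-1} D B$.

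Reading this identity entrywise immediately yields part (3): $c^\lambda_\mu(p) = \sum_\zeta b'_{\lambda\zeta}\, p(\overline\zeta)\, b_{\zeta\mu}$, and the extra vanishing corollary of \cref{thm:A1} confines the sum to $\lambda \supseteq \zeta \supseteq \mu$. For part (2.ii), the key observation is that $Z$ and $D$ are both diagonal, so $[Z, D] = 0$; a Leibniz expansion then gives
\begin{equation*}
  [Z, C] \;=\; [Z, B^{-1}]\, D B \;+\; B^{-1} D\, [Z, B],
\end{equation*}
and substituting $[Z, B] = B[Z, A]$ together with $[Z, B^{-1}] = -[Z, A]\, B^{-1}$ from \cref{thm:b-rec} collapses this to $[Z, C] = -[Z, A]\, C + C\, [Z, A] = [C, [Z, A]]$.

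Finally, part (1) is obtained by reading off entries. The diagonal of $C = B^{-1} D B$ coincides with the diagonal of $D$ because $B$ and $B^{-1}$ are unitriangular, giving (1.i); alternatively, specialize the formula in (3) to $\lambda = \mu$. The $(\lambda,\mu)$-entry of $[Z, C] = [C, [Z, A]]$ with $|\lambda| > |\mu|$ gives (1.ii), using that $[Z, A]_{\zeta\mu} = (\norm{\overline\zeta} - \norm{\overline\mu})\, a_{\zeta\mu}$ is supported on $\zeta \cover \mu$, and symmetrically for the other term. An induction on $|\lambda| - |\mu|$ then shows that (1.i) and (1.ii) together characterize $c^\lambda_\mu(p)$. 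I do not foresee a serious obstacle: the entire argument is a careful transcription of the type $A$ proofs in \cite{Sahi-Jack,Sahi-Mac} into the type-independent matrix language already developed in \cref{thm:b-rec}. The only things to watch are the signs in the commutator expansion and the correct invocation of extra vanishing to pin down the range of summation.
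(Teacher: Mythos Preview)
Your proposal is correct and follows essentially the same approach as the paper: evaluate the defining expansion at $\overline\nu$ to obtain $DB=BC$, deduce (2.i) and hence (3) entrywise, derive (2.ii) via the Leibniz expansion of $[Z,B^{-1}DB]$ together with $[Z,D]=0$ and the commutation relations of \cref{thm:b-rec}, and finally read off (1.i)--(1.ii) as the diagonal and $(\lambda,\mu)$-entries with the characterization following by induction on $|\lambda|-|\mu|$. The only cosmetic difference is ordering: the paper establishes (2.ii) before (3), and writes the three-term Leibniz expansion explicitly before observing the middle term vanishes.
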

\begin{proof}
	We again borrow the proof from \cite{Sahi-Mac}.
	
	Evaluating \cref{eqn:h_LR_p} at $\overline\nu$, we get 
	\begin{equation}
		p(\overline\nu) b_{\nu\mu} = \sum_\lambda b_{\nu\lambda}c^\lambda_\mu(p),
	\end{equation}
	in other words, $DB=BC$, hence (2.i) holds. For (2.ii), we have
	\begin{align*}
		[Z,C] =	[Z,B^{-1} DB] 
		&=	[Z,B^{-1}]DB + B^{-1}[Z,D]B + B^{-1} D[Z,B]	\\
		&=	-[Z,A]B^{-1} DB+B^{-1} DB[Z,A]	\\
		&=	-[Z,A]C+C[Z,A] = [C,[Z,A]].
	\end{align*}
	In the second line, we use \cref{eqn:commutation} and the fact that $D$ and $Z$ are diagonal matrices.
	
	Since $B$ is unitriangular, (2.i) implies that $C$ and $D$ share diagonal entries, hence (1.i) holds.
	Also, (1.ii) is exactly the $(\lambda,\mu)$-entry of (2.ii).
	(1) characterizes $c^\lambda_\mu(p)$ by induction on $|\lambda|-|\mu|$.
	
	(3) is the $(\lambda,\mu)$-entry of (2.i).
\end{proof}

Of special interest are the LR coefficients with $p=h_\nu$, defined by $c_{\mu\nu}^\lambda \coloneqq c^\lambda_\mu(h_\nu)$, in other words,
\begin{equation}\label{eqn:h_LR}
	h_\mu(x)h_\nu(x) = \sum_{\lambda} c_{\mu\nu}^\lambda h_\lambda(x).
\end{equation}
Unless otherwise stated, when we say LR coefficients, we will refer to $c_{\mu\nu}^\lambda$ instead of $c^\lambda_\mu(p)$.
We rewrite the previous theorem in this case.
\begin{theorem}\label{thm:LR-rec}\phantom{}
	\begin{enumerate}
		\item The following recursions characterize $c_{\mu\nu}^\lambda$:
		\begin{align}\label{eqn:LR-recursion}
			\begin{split}
				(i)&\ c_{\lambda\mu}^\lambda = b_{\lambda\mu}	\\
				(ii)&\ \Bigl(\norm{\overline\lambda}-\norm{\overline\mu}\Bigr) c_{\mu\nu}^\lambda = \sum_{\zeta\cover\mu} c_{\zeta\nu}^\lambda a_{\zeta\mu} \Bigl(\norm{\overline\zeta}-\norm{\overline\mu}\Bigr) - \sum_{\zeta\coveredby\lambda} a_{\lambda\zeta} c_{\mu\nu}^\zeta \Bigl(\norm{\overline\lambda}-\norm{\overline\zeta}\Bigr).
			\end{split}
		\end{align}
		\item The LR coefficient admits the following formula
		\begin{align}\label{eqn:c=b'bb}
			c_{\mu\nu}^\lambda = \sum_{\lambda\supseteq\zeta\supseteq\mu,\nu} b'_{\lambda\zeta}b_{\zeta\mu}b_{\zeta\nu}.
		\end{align}
		In particular, if $\lambda$ does not contain $\mu$ and $\nu$, then $c_{\mu\nu}^\lambda=0$.\qed 
	\end{enumerate}
\end{theorem}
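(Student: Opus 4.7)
The plan is to obtain Theorem \ref{thm:LR-rec} as a direct specialization of Theorem \ref{thm:LR-rec-p} to the distinguished choice $p = h_\nu$. Under this specialization, $c^\lambda_\mu(p) = c^\lambda_\mu(h_\nu)$ becomes $c_{\mu\nu}^\lambda$ by comparing the product-expansion definitions \cref{eqn:h_LR_p} and \cref{eqn:h_LR}, while the diagonal evaluation $p(\overline\zeta) = h_\nu(\overline\zeta)$ becomes the binomial coefficient $b_{\zeta\nu}$ by the very definition of $b$. Thus nothing substantively new is required; everything reduces to careful bookkeeping of sub- and superscripts.

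For part (1)(i), Theorem \ref{thm:LR-rec-p}(1)(i) gives $c^\lambda_\lambda(h_\nu) = h_\nu(\overline\lambda) = b_{\lambda\nu}$, which, after renaming the dummy $\nu$ to $\mu$, is exactly $c_{\lambda\mu}^\lambda = b_{\lambda\mu}$. For part (1)(ii), Theorem \ref{thm:LR-rec-p}(1)(ii) applied to $p = h_\nu$ translates the quantities $c^\lambda_\zeta(h_\nu)$ and $c^\zeta_\mu(h_\nu)$ into $c_{\zeta\nu}^\lambda$ and $c_{\mu\nu}^\zeta$ respectively, yielding the recursion \cref{eqn:LR-recursion} with no further manipulation.

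For part (2), Theorem \ref{thm:LR-rec-p}(3) specialized to $p = h_\nu$ gives
\begin{equation*}
	c_{\mu\nu}^\lambda = \sum_{\lambda\supseteq\zeta\supseteq\mu} b'_{\lambda\zeta}\, b_{\zeta\mu}\, b_{\zeta\nu}.
\end{equation*}
To refine the summation range from $\{\,\zeta:\lambda\supseteq\zeta\supseteq\mu\,\}$ to $\{\,\zeta:\lambda\supseteq\zeta\supseteq\mu,\nu\,\}$, I invoke the extra vanishing property (the corollary of \cref{thm:A1}): since $b_{\zeta\nu}=0$ unless $\zeta\supseteq\nu$, the purportedly extra terms contribute zero. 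The final vanishing assertion then follows by transitivity of containment: if $\lambda\not\supseteq\mu$ or $\lambda\not\supseteq\nu$, then no $\zeta$ can simultaneously satisfy $\zeta\subseteq\lambda$ and $\zeta\supseteq\mu,\nu$, so the sum is empty.

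Because the specialization is essentially mechanical, I do not anticipate a genuine obstacle; the only care needed lies in the notational translation between $c^\lambda_\mu(p)$ and $c_{\mu\nu}^\lambda$, and in combining \cref{thm:LR-rec-p}(3) with the extra vanishing property to tighten the summation range in \cref{eqn:c=b'bb}.
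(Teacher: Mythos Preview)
Your proposal is correct and matches the paper's own approach: the paper simply states ``We rewrite the previous theorem in this case'' and places a \qed, i.e., it obtains \cref{thm:LR-rec} by specializing \cref{thm:LR-rec-p} to $p=h_\nu$, exactly as you do. Your additional remark that the summation range in \cref{eqn:c=b'bb} is tightened via the extra vanishing property is the right justification for the restriction $\zeta\supseteq\nu$.
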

Note that by comparing degrees, we have $c_{\mu\nu}^\lambda=0$ if $|\lambda|>|\mu|+|\nu|$.

\subsection{Weighted Sum Formula for LR coefficients}
As in the case of binomial coefficients, the recursion formula for LR coefficients give rise to a weighted sum formula. 
This formula is new in all cases.
\begin{theorem}[\cref{thm:D}, Weighted Sum Formula for LR Coefficients]\label{thm:D1}
	The LR coefficient admits the following weighted sum formula
\begin{equation}\label{eqn:LR-wtsum}
	c_{\mu\nu}^\lambda = \sum_{\bm \zeta\in\mathfrak C_{\lambda\mu}} \wt_\nu^\LR(\bm \zeta)\prod_{i=0}^{k-1} a_{\bm \zeta_i\bm \zeta_{i+1}},
\end{equation}
where $\bm \zeta=(\bm \zeta_0,\dots,\bm \zeta_k)$ and the weight $\wt_\nu^\LR$ is defined as
\begin{equation}\label{eqn:LR-weight}
	\wt_\nu^\LR(\bm \zeta) \coloneqq \sum_{j=0}^{k} \frac{\displaystyle\prod_{0\leqslant i\leqslant k-1}\Bigl(\norm{\overline{\bm \zeta_i}}-\norm{\overline{\bm \zeta_{i+1}}}\Bigr)} {\displaystyle\prod_{\substack{0\leqslant i\leqslant k\\i\neq j}}\Bigl(\norm{\overline{\bm \zeta_j}}-\norm{\overline{\bm \zeta_i}}\Bigr)}  b_{\bm \zeta_j\nu}.
\end{equation}
\end{theorem}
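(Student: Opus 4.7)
The plan is to derive \cref{eqn:LR-wtsum} directly from the factorization $c_{\mu\nu}^\lambda = \sum_{\lambda\supseteq\zeta\supseteq\mu,\nu} b'_{\lambda\zeta}\,b_{\zeta\mu}\,b_{\zeta\nu}$ established in \cref{thm:LR-rec}(2), by substituting the two weighted sum formulas from \cref{thm:A1}. Since $b_{\zeta\nu}$ vanishes unless $\zeta\supseteq\nu$, the side constraint $\zeta\supseteq\nu$ may be dropped, leaving a sum over all intermediate partitions $\zeta$ with $\lambda\supseteq\zeta\supseteq\mu$.

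The central combinatorial observation is that specifying an intermediate partition $\zeta$ together with a chain $\bm\xi\in\mathfrak C_{\lambda\zeta}$ and a chain $\bm\eta\in\mathfrak C_{\zeta\mu}$ is equivalent to specifying a single chain $\bm\zeta\in\mathfrak C_{\lambda\mu}$ together with a marked position $0\leqslant j\leqslant k$: the chains $\bm\xi$ and $\bm\eta$ are obtained by splitting $\bm\zeta$ at $\bm\zeta_j=\zeta$, with $j=|\lambda|-|\zeta|$. Under this bijection the two products of adjacent binomial coefficients concatenate to $\prod_{i=0}^{k-1}a_{\bm\zeta_i\bm\zeta_{i+1}}$. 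Substituting therefore yields
\begin{align*}
	c_{\mu\nu}^\lambda = \sum_{\bm\zeta\in\mathfrak C_{\lambda\mu}}\left(\sum_{j=0}^{k}\wt'(\bm\zeta_0,\dots,\bm\zeta_j)\,\wt(\bm\zeta_j,\dots,\bm\zeta_k)\,b_{\bm\zeta_j\nu}\right)\prod_{i=0}^{k-1}a_{\bm\zeta_i\bm\zeta_{i+1}}.
\end{align*}

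It remains to verify, writing $z_i=\norm{\overline{\bm\zeta_i}}$, the algebraic identity
\begin{align*}
	\wt'(\bm\zeta_0,\dots,\bm\zeta_j)\,\wt(\bm\zeta_j,\dots,\bm\zeta_k)=\frac{\prod_{0\leqslant i\leqslant k-1}(z_i-z_{i+1})}{\prod_{\substack{0\leqslant i\leqslant k\\i\neq j}}(z_j-z_i)},
\end{align*}
which upon comparison with \cref{eqn:LR-weight} exhibits the bracketed $j$-sum as $\wt_\nu^\LR(\bm\zeta)$. This is routine: in $\wt'(\bm\zeta_0,\dots,\bm\zeta_j)=(-1)^j\prod_{i=0}^{j-1}\tfrac{z_{i+1}-z_i}{z_j-z_i}$ the sign $(-1)^j$ is absorbed into the $j$ numerator factors to produce $\prod_{i=0}^{j-1}(z_i-z_{i+1})$; merging with the numerator of $\wt(\bm\zeta_j,\dots,\bm\zeta_k)=\prod_{i=j}^{k-1}\tfrac{z_i-z_{i+1}}{z_j-z_{i+1}}$ gives the full numerator $\prod_{i=0}^{k-1}(z_i-z_{i+1})$, while the two denominators combine, after reindexing the second product via $i\mapsto i+1$, into $\prod_{i\neq j}(z_j-z_i)$.

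I expect the main source of care to be the sign and reindexing bookkeeping in this final identity, but it is entirely mechanical. The structural heart of the argument is the split-and-reconcatenate bijection on chains, which is forced once one combines \cref{thm:LR-rec}(2) with the weighted sum formulas of \cref{thm:A1}; no additional input beyond these theorems is required.
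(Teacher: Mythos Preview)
Your proof is correct and takes a genuinely different route from the paper's.

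The paper proves \cref{thm:D1} by verifying that the claimed sum satisfies the recursion \cref{eqn:LR-recursion} of \cref{thm:LR-rec}(1); the bulk of the work is the identity
\[
\wt_\nu^\LR(\bm\zeta_0,\dots,\bm\zeta_k)=\wt_\nu^\LR(\bm\zeta_0,\dots,\bm\zeta_{k-1})\,\frac{z_{k-1}-z_k}{z_0-z_k}-\wt_\nu^\LR(\bm\zeta_1,\dots,\bm\zeta_k)\,\frac{z_0-z_1}{z_0-z_k},
\]
which is checked by comparing the coefficient of each $b_{\bm\zeta_j\nu}$ via a small partial-fractions computation. You instead start from the closed formula \cref{eqn:c=b'bb} of \cref{thm:LR-rec}(2), substitute both weighted sum formulas of \cref{thm:A1}, and recombine the two chains at the intermediate node via the split/concatenate bijection. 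Your approach is shorter and more transparent: the only computation is the product identity $\wt'(\bm\zeta_0,\dots,\bm\zeta_j)\,\wt(\bm\zeta_j,\dots,\bm\zeta_k)=\prod_i(z_i-z_{i+1})\big/\prod_{i\neq j}(z_j-z_i)$, which is a one-line sign-and-index check, whereas the paper's recursion verification requires a case split on $j$. The paper's method, on the other hand, never uses the formula for $b'$ and so in principle needs less input (only \cref{thm:A1}(1) via the base case $c^\lambda_{\lambda\nu}=b_{\lambda\nu}$, plus the recursion itself).
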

\begin{proof}
	Temporarily denote by $\overline c_{\mu\nu}^\lambda$ the sum on the RHS of \cref{eqn:LR-wtsum}. We will verify that they satisfy the recursion \cref{eqn:LR-recursion}.

	When $\lambda=\mu$, the sum is over the single chain $\bm \zeta=(\lambda)$, then we have $c_{\lambda\nu}^\lambda=b_{\lambda\nu}$.	
	
	The RHS of \cref{eqn:LR-recursion}(ii), divided by $\norm{\overline \lambda}-\norm{\overline \mu}$ and with $c_{\mu\nu}^\lambda$ replaced by $\overline c_{\mu\nu}^\lambda$, is
	\begin{align*}
		&\=	\sum_{\xi \cover \mu} \overline c_{\xi\nu}^\lambda a_{\xi\mu} \frac{\norm{\overline \xi}-\norm{\overline \mu}}{\norm{\overline \lambda}-\norm{\overline \mu}} - \sum_{\xi \coveredby \lambda}a_{\lambda\xi}\overline c_{\mu\nu}^\xi \frac{\norm{\overline \lambda}-\norm{\overline \xi}}{\norm{\overline \lambda}-\norm{\overline \mu}}\\
		&=	\sum_{\xi \cover \mu} \sum_{\bm \zeta'\in\mathfrak C_{\lambda\xi}} \wt_\nu^\LR(\bm \zeta')\prod_{i=0}^{k-2} a_{\bm \zeta_i\bm \zeta_{i+1}} a_{\xi\mu} \frac{\norm{\overline \xi}-\norm{\overline \mu}}{\norm{\overline \lambda}-\norm{\overline \mu}} \\
		&\=	-\sum_{\xi \coveredby \lambda}\sum_{\bm \zeta''\in\mathfrak C_{\xi\mu}} \wt_\nu^\LR(\bm \zeta'')\prod_{i=1}^{k-1} a_{\bm \zeta_i\bm \zeta_{i+1}} a_{uy} \frac{\norm{\overline \lambda}-\norm{\overline \xi}}{\norm{\overline \lambda}-\norm{\overline \mu}},
	\end{align*}
	where 
	\begin{equation*}
		\bm \zeta'=(\bm \zeta_0=\lambda\cover\cdots\cover\bm \zeta_{k-1}=\xi)\in\mathfrak C_{\lambda\xi},\quad \bm \zeta''=(\bm \zeta_1=\xi\cover\cdots\cover\bm \zeta_k=\mu)\in\mathfrak C_{\xi\mu}.
	\end{equation*}
	Extend $\bm \zeta'$ and $\bm \zeta''$ to 
	\begin{equation*}
		\bm \zeta_0=\lambda\cover\cdots\cover\bm \zeta_{k-1}=\xi\cover\bm \zeta_k=\mu \quad\text{and}\quad \bm \zeta_0=\lambda\cover\bm \zeta_1=\xi\cover\cdots\cover\bm \zeta_k=\mu
	\end{equation*}
	respectively. 
	Then the two double sums can be viewed as summing over $\mathfrak C_{\lambda\mu}$.
	Note that both $\displaystyle\prod_{i=0}^{k-2} a_{\bm \zeta_i\bm \zeta_{i+1}}\cdot a_{\xi\mu}$ and $\displaystyle\prod_{i=1}^{k-1} a_{\bm \zeta_i\bm \zeta_{i+1}}\cdot a_{\lambda\xi}$ are now written as $\displaystyle\prod_{i=0}^{k-1} a_{\bm \zeta_i\bm \zeta_{i+1}}$. 
	Hence we get
	\begin{equation*}
		\sum_{\bm \zeta\in\mathfrak C_{\lambda\mu}} 
		\(
		\wt_\nu^\LR(\bm \zeta_0,\dots,\bm \zeta_{k-1}) \frac{\norm{\overline{\bm \zeta_{k-1}}}-\norm{\overline{\bm \zeta_k}}}{\norm{\overline{\bm \zeta_0}}-\norm{\overline{\bm \zeta_k}}} 
		-\wt_\nu^\LR(\bm \zeta_1,\dots,\bm \zeta_k) \frac{\norm{\overline{\bm \zeta_0}}-\norm{\overline{\bm \zeta_1}}}{\norm{\overline{\bm \zeta_0}}-\norm{\overline{\bm \zeta_k}}}
		\) 
		\prod_{i=0}^{k-1} a_{\bm \zeta_i\bm \zeta_{i+1}}.
	\end{equation*}
	Hence it suffices to show 
	\begin{equation*}
		\wt_\nu^\LR(\bm \zeta_0,\dots,\bm \zeta_k) = 
		\wt_\nu^\LR(\bm \zeta_0,\dots,\bm \zeta_{k-1}) \frac{\norm{\overline{\bm \zeta_{k-1}}}-\norm{\overline{\bm \zeta_k}}}{\norm{\overline{\bm \zeta_0}}-\norm{\overline{\bm \zeta_k}}} 
		-\wt_\nu^\LR(\bm \zeta_1,\dots,\bm \zeta_k) \frac{\norm{\overline{\bm \zeta_0}}-\norm{\overline{\bm \zeta_1}}}{\norm{\overline{\bm \zeta_0}}-\norm{\overline{\bm \zeta_k}}},
	\end{equation*}
	for any chain $\bm \zeta=(\bm \zeta_0,\dots,\bm \zeta_k)\in\mathfrak C_{\lambda\mu}$.
	The RHS is
	\begin{align*}
		&\=	\prod_{i=0}^{k-2} \Bigl(\norm{\overline{\bm \zeta_i}}-\norm{\overline{\bm \zeta_{i+1}}}\Bigr) \cdot \sum_{j=0}^{k-1} \frac{b_{\bm \zeta_j\nu}} {\displaystyle\prod_{\substack{0\leqslant i\leqslant k-1\\i\neq j}}\Bigl(\norm{\overline{\bm \zeta_j}}-\norm{\overline{\bm \zeta_i}}\Bigr)} \cdot \frac{\norm{\overline{\bm \zeta_{k-1}}}-\norm{\overline{\bm \zeta_k}}}{\norm{\overline{\bm \zeta_0}}-\norm{\overline{\bm \zeta_k}}}	\\
		&\=	-\prod_{i=1}^{k-1} \Bigl(\norm{\overline{\bm \zeta_i}}-\norm{\overline{\bm \zeta_{i+1}}}\Bigr) \cdot \sum_{j=1}^{k} \frac{b_{\bm \zeta_j\nu}} {\displaystyle\prod_{\substack{1\leqslant i\leqslant k\\i\neq j}} \Bigl(\norm{\overline{\bm \zeta_j}}-\norm{\overline{\bm \zeta_i}}\Bigr)} \cdot \frac{\norm{\overline{\bm \zeta_0}}-\norm{\overline{\bm \zeta_1}}}{\norm{\overline{\bm \zeta_0}}-\norm{\overline{\bm \zeta_k}}},
	\end{align*}
	which has a common factor $\displaystyle\prod_{i=0}^{k-1} \Bigl(\norm{\overline{\bm \zeta_i}}-\norm{\overline{\bm \zeta_{i+1}}}\Bigr)$. Dividing by this factor, we get
	\begin{align*}
		\sum_{j=0}^{k-1} \frac{b_{\bm \zeta_j\nu}} {\displaystyle\prod_{\substack{0\leqslant i\leqslant k-1\\i\neq j}}\Bigl(\norm{\overline{\bm \zeta_j}}-\norm{\overline{\bm \zeta_i}}\Bigr)}\cdot \frac{1}{\norm{\overline{\bm \zeta_0}}-\norm{\overline{\bm \zeta_k}}}
		-\sum_{j=1}^{k} \frac{b_{\bm \zeta_j\nu}} {\displaystyle\prod_{\substack{1\leqslant i\leqslant k\\i\neq j}}\Bigl(\norm{\overline{\bm \zeta_j}}-\norm{\overline{\bm \zeta_i}}\Bigr)}\cdot \frac{1}{\norm{\overline{\bm \zeta_0}}-\norm{\overline{\bm \zeta_k}}}.
	\end{align*}
	
	Now the coefficients of $b_{\bm \zeta_0\nu}$ and $b_{\bm \zeta_k\nu}$ 
	are the same and equal to $\dfrac{1}{\displaystyle\prod_{\substack{0\leqslant i\leqslant k\\i\neq j}}\Bigl(\norm{\overline{\bm \zeta_j}}-\norm{\overline{\bm \zeta_i}}\Bigr)}$.
	
	The coefficient of $b_{\bm \zeta_j\nu}$, with $1\leqslant j\leqslant k-1$, is
	\begin{align*}
		&\=	\frac1{\displaystyle\prod_{\substack{0\leqslant i\leqslant k-1\\i\neq j}}\Bigl(\norm{\overline{\bm \zeta_j}}-\norm{\overline{\bm \zeta_i}}\Bigr)}\frac{1}{\norm{\overline{\bm \zeta_0}}-\norm{\overline{\bm \zeta_k}}} 
		-\frac1{\displaystyle\prod_{\substack{1\leqslant i\leqslant k\\i\neq j}}\Bigl(\norm{\overline{\bm \zeta_j}}-\norm{\overline{\bm \zeta_i}}\Bigr)}\cdot \frac{1}{\norm{\overline{\bm \zeta_0}}-\norm{\overline{\bm \zeta_k}}}	\\
		&=	\frac1{\displaystyle\prod_{\substack{1\leqslant i\leqslant k-1\\i\neq j}}\Bigl(\norm{\overline{\bm \zeta_j}}-\norm{\overline{\bm \zeta_i}}\Bigr)} \frac{1}{\norm{\overline{\bm \zeta_0}}-\norm{\overline{\bm \zeta_k}}} \(\frac{1}{\norm{\overline{\bm \zeta_j}}-\norm{\overline{\bm \zeta_0}}}-\frac{1}{\norm{\overline{\bm \zeta_j}}-\norm{\overline{\bm \zeta_k}}}\)	\\
		&=	\frac1{\displaystyle\prod_{\substack{0\leqslant i\leqslant k\\i\neq j}}\Bigl(\norm{\overline{\bm \zeta_j}}-\norm{\overline{\bm \zeta_i}}\Bigr)}.
	\end{align*}
	
	Hence we show that $\overline c_{\mu\nu}^\lambda$ satisfies the recursions in \cref{eqn:LR-recursion}, and we are done.
\end{proof}

Observe that setting $\nu=\lambda$, \cref{eqn:LR-recursion,eqn:LR-wtsum,eqn:LR-weight} for LR coefficients \emph{degenerate} to \cref{eqn:b-recursion,eqn:b-wtsum,eqn:b-weight} for binomial coefficients, respectively. 

The following is an easy corollary.
\begin{corollary}\label{cor:abc}
	When $\lambda \cover \mu$, the LR coefficients and binomial coefficients are related by
	\begin{equation}\label{eqn:abc}
		c_{\mu\nu}^\lambda  = a_{\lambda\mu}(b_{\lambda\nu}-b_{\mu\nu}).
	\end{equation}
	More generally, for $\lambda\supseteq\mu$ and any $\bm \zeta=(\bm \zeta_0,\dots,\bm \zeta_k)\in\mathfrak C_{\lambda\mu}$, 
	\begin{equation}
		b_{\lambda\nu}-b_{\mu\nu} = \sum_{i=0}^{k-1} \frac{c_{\bm \zeta_{i+1}\nu}^{\bm \zeta_i}}{a_{\bm \zeta_i\bm \zeta_{i+1}}}.
	\end{equation}
\end{corollary}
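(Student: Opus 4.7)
The plan is to derive both identities directly from the weighted sum formula in Theorem~\ref{thm:D1}. The first identity is the specialization of that formula to the single-step case $\lambda\cover\mu$, and the second is a telescoping consequence of the first along a chain.

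For the first identity, I would apply \cref{eqn:LR-wtsum} with $k=|\lambda|-|\mu|=1$. Then $\mathfrak C_{\lambda\mu}$ contains the unique chain $\bm\zeta=(\bm\zeta_0,\bm\zeta_1)=(\lambda,\mu)$, and the product $\prod_{i=0}^{k-1} a_{\bm\zeta_i\bm\zeta_{i+1}}$ reduces to $a_{\lambda\mu}$. The weight \cref{eqn:LR-weight} becomes a sum of only two terms. The numerator common to both is $\norm{\overline\lambda}-\norm{\overline\mu}$; for $j=0$ the denominator is $\norm{\overline\lambda}-\norm{\overline\mu}$, contributing $b_{\lambda\nu}$, while for $j=1$ the denominator is $\norm{\overline\mu}-\norm{\overline\lambda}$, contributing $-b_{\mu\nu}$. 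Thus $\wt_\nu^\LR(\bm\zeta)=b_{\lambda\nu}-b_{\mu\nu}$, yielding \cref{eqn:abc}.

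For the second identity, given a chain $\bm\zeta_0=\lambda\cover\bm\zeta_1\cover\cdots\cover\bm\zeta_k=\mu$, I would apply \cref{eqn:abc} to each adjacent pair $\bm\zeta_i\cover\bm\zeta_{i+1}$:
\begin{equation*}
\frac{c_{\bm\zeta_{i+1}\nu}^{\bm\zeta_i}}{a_{\bm\zeta_i\bm\zeta_{i+1}}} = b_{\bm\zeta_i\nu}-b_{\bm\zeta_{i+1}\nu}.
\end{equation*}
Summing over $i=0,\dots,k-1$ telescopes to $b_{\bm\zeta_0\nu}-b_{\bm\zeta_k\nu}=b_{\lambda\nu}-b_{\mu\nu}$, as required.

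There is really no main obstacle here: once \cref{thm:D1} is in hand, both parts of the corollary follow by direct substitution and a telescoping argument. The only minor subtlety is ensuring that $a_{\bm\zeta_i\bm\zeta_{i+1}}$ appears in the denominator in a well-defined way, which is automatic since adjacent binomial coefficients along a covering chain are nonzero (in fact, \cref{prop:adj-pos} shows they lie in $\fpp$).
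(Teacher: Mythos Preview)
Your proposal is correct and follows essentially the same approach as the paper's own proof: the first identity is obtained by specializing the weighted sum formula \cref{eqn:LR-wtsum} to the single-step case $k=1$, and the second identity by telescoping the first along the chain. The paper's argument is more terse but identical in substance.
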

\begin{proof}
	The first claim follows from \cref{eqn:LR-wtsum} directly: $\bm\zeta=(\lambda,\mu)$ is the only chain, and the weight $\wt_\nu^\LR(\bm\zeta)$ becomes $b_{\lambda\nu}-b_{\mu\nu}$.
	The second claim follows from the telescoping series technique
	\begin{equation*}
		b_{\lambda\nu}-b_{\mu\nu} = b_{\bm \zeta_0\nu}-b_{\bm \zeta_k\nu} = \sum_{i=0}^{k-1} (b_{\bm \zeta_i\nu}-b_{\bm \zeta_{i+1}\nu}) = \sum_{i=0}^{k-1} \frac{c_{\bm \zeta_{i+1}\nu}^{\bm \zeta_i}}{a_{\bm \zeta_i\bm \zeta_{i+1}}}.\tag*{\qedhere}
	\end{equation*}
\end{proof}
Lemma~\ref{lem:Pieri} is then the special case $\nu=\varepsilon_1=(1,0,\dots,0)$, i.e., 
\begin{align}\label{eqn:abc1}
	c_{\mu\varepsilon_1}^\lambda = 
	\begin{dcases}
		b_{\mu\varepsilon_1},	&	\lambda=\mu;	\\
		a_{\lambda\mu} (b_{\lambda\varepsilon_1}-b_{\mu\varepsilon_1}),	&	 \lambda \cover \mu;	\\
		0,	&	\text{otherwise}.
	\end{dcases}
\end{align}

As mentioned in \cref{sec:intro}, \cref{cor:abc} establishes a key relation between adjacent LR coefficients and binomial coefficients, which is crucial in the proof of \cref{thm:aLR-positivity}.
\begin{proof}[\upshape\bfseries{Proof of \cref{thm:aLR-positivity}}]
	Suppose $\lambda\cover\mu$.
	We will assume \cref{prop:adj-pos} (proved in \cref{sec:wtsum-proof}) and \cref{thm:b-mono} (proved in \cref{sec:comb-proof}), which state that 
	$a_{\lambda\mu}\in\fpp$ and $b_{\lambda\nu}-b_{\mu\nu}\in\fp$. 
	Now, by \cref{cor:abc}, we have $c_{\mu\nu}^\lambda\in\fp$. 
	The part of strict positivity follows from that of \cref{thm:b-mono}.
\end{proof}

As another corollary, we generalize \cref{thm:D} to a similar formula for structure coefficient $c_\mu^\lambda(p)$ defined by \cref{eqn:h_LR_p}.
\begin{corollary}\label{thm:LR-p}
	For $p\in\Lambda$, the structure coefficient $c_\mu^\lambda(p)$ admits the following weighted sum formula
	\begin{equation}\label{eqn:LR-wtsum-p}
		c_{\mu}^\lambda(p) = \sum_{\bm \zeta\in\mathfrak C_{\lambda\mu}} \wt_p^\LR(\bm \zeta)\prod_{i=0}^{k-1} a_{\bm \zeta_i\bm \zeta_{i+1}},
	\end{equation}
	where $\bm \zeta=(\bm \zeta_0,\dots,\bm \zeta_k)$ and the weight $\wt_p^\LR$ is defined as
	\begin{equation}\label{eqn:LR-weight-p}
		\wt_p^\LR(\bm \zeta) \coloneqq 
		\sum_{j=0}^{k} \frac{\displaystyle\prod_{0\leqslant i\leqslant k-1}\Bigl(\norm{\overline{\bm \zeta_i}}-\norm{\overline{\bm \zeta_{i+1}}}\Bigr)} {\displaystyle\prod_{\substack{0\leqslant i\leqslant k\\i\neq j}}\Bigl(\norm{\overline{\bm \zeta_j}}-\norm{\overline{\bm \zeta_i}}\Bigr)}  p(\overline{\bm\zeta_j}).
	\end{equation}
	In particular, we have the following:
	\begin{enumerate}
		\item The weight $\wt_\nu^\LR$ is a special case of $\wt_{p}^\LR$ where $p=h_\nu$.
		\item For the family $\AJ$, the weight $\wt_p^\LR(\bm\zeta)$ takes the following simple form:
		\begin{align}
			\wt_p^{\LR,\AJ}(\bm\zeta) = \frac{1}{k!}\sum_{j=0}^{k}(-1)^{k-j}\binom{k}{j} p(\overline{\bm\zeta_j}).
		\end{align}
		\item When $\mu=\bm0$, $h_\mu=1$, then we have the expansion of $p$ in the interpolation basis $h_\lambda$,
		\begin{align}
			p=\sum_{\lambda}c_{\bm0}^\lambda(p)h_\lambda,
		\end{align}
		where the sum runs over all saturated chains from $\lambda$ to $\bm0$, i.e., standard tableaux of shape $\lambda$.
	\end{enumerate}
\end{corollary}
\begin{proof}
	This follows from the fact that the interpolation polynomials form an $\mathbb F$-basis for $\Lambda$ and the linearity of \cref{eqn:h_LR,eqn:LR-wtsum,eqn:LR-weight} in terms of $h_\nu$.
\end{proof}


\section{Proof of \texorpdfstring{\cref{thm:b-positivity}}{Theorem B} via the Weighted Sum Formula}\label{sec:wtsum-proof}
We begin with a simple lemma. Recall that the distinguished RT of shape $\lambda$ is the one whose first row (viewed as a $\lambda_1$-tuple) is precisely the conjugate partition $\lambda'$, see \cref{sec:pre}.
\begin{lemma}\label{lem:RTvanishing}
	Fix a partition $\lambda\in\mathcal P_n$, and let $T$ be an RT of shape $\lambda$. Let $\tau,q,t$ be indeterminates over $\mathbb Q$.
	Then the two products
	\begin{align}\label{eqn:RTproducts}
		\prod_{s\in\lambda} \((\lambda_{T(s)}-a_\lambda'(s)+ l_\lambda'(s)\tau\)\text{\quad and\quad} \prod_{s\in\lambda} \(q^{\lambda_{T(s)}-a_\lambda'(s)}t^{l_\lambda'(s)}-1\)
	\end{align}
	vanish identically for all but the distinguished RT.
\end{lemma}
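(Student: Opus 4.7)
The plan is to identify the identically-vanishing factors in each product and then show, by a short induction on the first row of $T$, that such a factor must exist whenever $T$ is not distinguished.

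First I would observe that the factor at $s = (i,j)$ in the first product is the polynomial $(\lambda_{T(i,j)} - (j-1)) + (i-1)\tau$ in $\tau$, which vanishes identically if and only if $i=1$ and $\lambda_{T(1,j)} = j-1$; similarly, the factor $q^{\lambda_{T(s)}-a'_\lambda(s)} t^{l'_\lambda(s)} - 1$ in the second product vanishes as a Laurent polynomial in $q,t$ under exactly the same two conditions. So both parts of the lemma reduce to the purely combinatorial claim: for any non-distinguished RT $T$ of shape $\lambda$, there exists some $j \in [\lambda_1]$ with $\lambda_{T(1,j)} = j-1$.

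I would prove the contrapositive by induction on $j$. Assuming $\lambda_{T(1,k)} \neq k-1$ for every $k \in [\lambda_1]$, I show $T(1,j) = \lambda'_j$ for all $j$, which forces $T$ to be the unique RT with first row $\lambda'$ (since strict decrease of entries in each column determines the remaining rows), hence the distinguished one. The two RT constraints give $T(1,j) \geq \lambda'_j$ (column $j$ contains $\lambda'_j$ strictly decreasing positive integers in $[n]$) and $T(1,j) \leq T(1,j-1)$ (weakly decreasing row). For the base case $j = 1$: if $T(1,1) > \lambda'_1$ then $T(1,1)$ exceeds the length of $\lambda$, giving $\lambda_{T(1,1)} = 0 = 1-1$, a contradiction; so $T(1,1) = \lambda'_1$. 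For the inductive step, the hypothesis $T(1,j-1) = \lambda'_{j-1}$ yields $\lambda'_j \leq T(1,j) \leq \lambda'_{j-1}$; if $T(1,j) > \lambda'_j$ then $T(1,j) \in (\lambda'_j, \lambda'_{j-1}]$, which by the definition of the conjugate consists precisely of those indices $i$ with $\lambda_i = j-1$, yielding $\lambda_{T(1,j)} = j-1$ and again a contradiction. Hence $T(1,j) = \lambda'_j$.

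I do not anticipate any serious obstacle here: the one delicate step is recognizing the range $(\lambda'_j, \lambda'_{j-1}]$ as exactly the set of indices with $\lambda$-value $j-1$, but this is immediate from the definition of the conjugate partition. The proof is uniform across the $\tau$ and $(q,t)$ cases because the vanishing criterion for a linear polynomial in $\tau$ and for a nontrivial monomial minus $1$ in $q,t$ are identical.
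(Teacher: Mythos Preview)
Your proof is correct and, like the paper's, reduces immediately to a combinatorial statement about the first row of $T$: the factor at $s=(i,j)$ vanishes identically precisely when $i=1$ and $\lambda_{T(1,j)}=j-1$. Where the arguments diverge is in establishing this combinatorial statement. The paper groups the first-row entries by their value, writing the row as $(n^{m_n},\dots,1^{m_1})$, deduces the inequalities $\lambda_k\geqslant m_n+\cdots+m_k$ from non-vanishing, and then argues by contradiction (looking at the smallest $k$ where the inequality is strict and counting boxes in a particular column) that these must all be equalities, forcing the first row to equal $\lambda'$. You instead run a direct induction on the column index $j$, using the observation that the interval $(\lambda'_j,\lambda'_{j-1}]$ is exactly the set of indices $i$ with $\lambda_i=j-1$. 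Your route is a bit more streamlined and makes the role of the conjugate partition transparent at each step; the paper's multiplicity bookkeeping is slightly more indirect but has the minor advantage of packaging the non-vanishing hypothesis as a single family of inequalities before the combinatorial punchline. Both arguments are uniform across the $\tau$ and $(q,t)$ cases for the same reason you note.
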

\begin{proof}
	If the box $s=(i,j)$ is not in the first row, then $l'(s)>0$, giving a nonzero factor. 
	Suppose $T$ corresponds to a non-vanishing product.
	Let $m_k$ be the multiplicity of $k$ in the first row of $T$.
	In particular, $\lambda_1=m_1+\dots+m_n$. 
	Then the products have the following factors 
	\begin{align*}
		\prod_{k=1}^{n}\prod_{i=1}^{m_k} \(\lambda_k-(m_n+\dots+m_{k+1})-i+1\) \text{\quad and\quad} \prod_{k=1}^{n}\prod_{i=1}^{m_k} \(q^{\lambda_k-(m_n+\dots+m_{k+1})-i+1}-1\)
	\end{align*}
	respectively.
	Hence we have 
	\begin{align*}
		\lambda_k\geqslant m_n+\dots+m_k,\quad 1\leqslant k\leqslant n.
	\end{align*}
	We claim that the inequalities above must all be equalities.
	Proceeding by contradiction, let $k>1$ be the smallest such that $\lambda_k> m_n+\dots+m_k$, then $\lambda_{k-1}-\lambda_{k}<m_{k-1}$.
	Consider the first column whose first row is labeled by $k-1$, which is column number $m_n+\dots+m_{k}+1 = \lambda_{k-1}-m_{k-1}+1$.
	However, there are at least $k$ boxes in this column (since $\lambda_1\geqslant\cdots\geqslant\lambda_k\geqslant\lambda_{k-1}-m_{k-1}+1$), a contradiction.
	
	In other words, the two products are non-vanishing if and only if the first row of $T$ is equal to $(n^{\lambda_{n}},\dots,k^{\lambda_{k}-\lambda_{k+1}},\dots,1^{\lambda_{1}-\lambda_{2}})=\lambda'$, i.e., $T$ is the distinguished RT.
\end{proof}

The following two propositions are known in certain cases \cite{Sahi-Mac,Sahi-Jack,Rains05,Koo15}. 
Here, we give a uniform proof for all families.
\begin{proposition}\label{prop:H}
	The normalizing factor $H(\lambda):=h_\lambda^\monic(\overline\lambda)$ for each family is given by:
	\begin{enumerate}
		\item $\AJ$:
		\begin{align}
			H(\lambda;\tau) 
			=	c_\lambda'(\tau),
		\end{align}
		\item $\BJ$:
		\begin{align}
			H(\lambda;\tau,\alpha) 
			&=	c_\lambda'(\tau) d_\lambda(\tau,\alpha),
		\end{align}
		\item $\AM$:
		\begin{align}
			H(\lambda;q,t) 
			&=	(-1)^{|\lambda|}q^{n(\lambda')}t^{(n-1)|\lambda|-2n(\lambda)}\cdot c_\lambda'(q,t),
		\end{align}	
		\item $\BM$:
		\begin{align}
			H(\lambda;q,t,a) 
			&=	q^{-|\lambda|-2n(\lambda')}t^{-(n-1)|\lambda|+n(\lambda)}a^{-|\lambda|} \cdot c_\lambda'(q,t)d_\lambda(q,t,a),
		\end{align}
	\end{enumerate}
	where $c_\lambda':=\prod_{s\in\lambda} c_\lambda'(s)$, with $c_\lambda'(s)$ given by \cref{eqn:hooklength,eqn:hooklength-qt}, and $d_\lambda:=\prod_{s\in\lambda} d_\lambda(s)$, with $d_\lambda(s)$ is given by
\begin{gather}
	d_\lambda(s;\tau,\alpha) \coloneqq a_\lambda(s)+2a_\lambda'(s)+1+\(2n-(l_\lambda(s)+2l_\lambda'(s)+2)\)\tau+2\alpha,	\label{eqn:dlength}\\
	d_\lambda(s;q,t,a) \coloneqq 1 - q^{a_\lambda(s)+2a_\lambda'(s)+1} t^{2n - (l_\lambda(s)+2l_\lambda'(s)+2)}a^2.\label{eqn:dlength-qt}
\end{gather}
The statistic $n(\lambda)$ is given by \cite[I.~(1.5)]{Mac15}, namely,
\begin{align}\label{eqn:n-function}
	n(\lambda) \coloneqq \sum_{(i,j)\in\lambda} (i-1) = \sum_i (i-1)\lambda_i. 
\end{align}
\end{proposition}
\begin{proof}
	By \cref{lem:RTvanishing} and the combinatorial formulas \cref{eqn:AJ-comb,eqn:BJ-comb,eqn:AM-comb,eqn:BM-comb}, we see that the quantities $H(\lambda)$ is given by the distinguished RT. The desired identities follow from some easy calculations.
\end{proof}

\begin{proposition}\label{prop:adj-pos}
	Suppose $\lambda\cover\mu$ and that $\lambda$ and $\mu$ differ by the box $s_0=(i_0,j_0)$. 
	Let $C\coloneqq C_{\lambda/\mu}\setminus R_{\lambda/\mu}$ and $R\coloneqq R_{\lambda/\mu}\setminus C_{\lambda/\mu}$ be the set of \emph{other} boxes in the column and row of $s_0$ respectively.
	Then the adjacent binomial coefficient $a_{\lambda\mu}$ can be given by the following formulas. In particular, $a_{\lambda\mu}\in\fpp$.
	\begin{enumerate}
		\item $\AJ$:
			\begin{align}\label{eqn:AJ_a}
				a_{\lambda\mu}	=	\prod_{s\in C}\frac{c_\lambda(s;\tau)}{c_\mu(s;\tau)} \prod_{s\in R}\frac{c_\lambda'(s;\tau)}{c_\mu'(s;\tau)},
			\end{align}
		\item $\BJ$:
			\begin{align}\label{eqn:BCJ_a}
				a_{\lambda\mu}
				=	\prod_{s\in C}\frac{c_\lambda(s;\tau)d_\lambda(s;\tau,\alpha)}{c_\mu(s;\tau)d_\mu(s;\tau,\alpha)} \prod_{s\in R}\frac{c_\lambda'(s;\tau)d_\lambda(s;\tau,\alpha)}{c_\mu'(s;\tau)d_\mu(s;\tau,\alpha)},
			\end{align}
		\item $\AM$:
			\begin{align}\label{eqn:AM_a}
				a_{\lambda\mu} = \frac1{t^{i_0-1}}\cdot \prod_{s\in C}\frac{c_\lambda(s;q,t)}{c_\mu(s;q,t)} \prod_{s\in R}\frac{c_\lambda'(s;q,t)}{c_\mu'(s;q,t)},
			\end{align}
		\item $\BM$:
			\begin{align}\label{eqn:BCM_a}
				a_{\lambda\mu} = \frac1{q^{j_0-1}}\cdot \prod_{s\in C}\frac{c_\lambda(s;q,t)d_\lambda(s;q,t,a)}{c_\mu(s;q,t)d_\mu(s;q,t,a)} \prod_{s\in R} \frac{c_\lambda'(s;q,t)d_\lambda(s;q,t,a)}{c_\mu'(s;q,t)d_\mu(s;q,t,a)},
			\end{align}
	\end{enumerate}
	where $c_\lambda(s)$ and $c_\lambda'(s)$ are given by \cref{eqn:hooklength,eqn:hooklength-qt} and $d_\lambda(s)$ by \cref{eqn:dlength,eqn:dlength-qt}.
\end{proposition}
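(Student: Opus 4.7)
The plan is to compute $a_{\lambda\mu}=h_\mu^\monic(\overline\lambda)/H(\mu)$ directly from the combinatorial formulas \eqref{eqn:AJ-comb}--\eqref{eqn:BM-comb} for $h_\mu^\monic$, then divide by $H(\mu)$ using \cref{prop:H}. After substituting $x=\overline\lambda$, the factor attached to a box $s=(i,j)\in\mu$ with $T(s)=k$ simplifies to $\lambda_k-(j-1)+(i-1)\tau$ in the $\AJ$ case; to the monomial $t^{n-k-i+1}$ times $q^{\lambda_k}t^{i-1}-q^{j-1}$ in the $\AM$ case; and in $\BJ,\BM$ the ``square'' factor factors as a difference of squares whose first factor matches the $\AJ$ (resp.\ $\AM$) expression and whose second factor is manifestly nonzero generically in $\alpha$ (resp.\ $a$), and ultimately produces the $d_\lambda,d_\mu$ ratios. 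In every family the vanishing analysis reduces to the same first-row condition $\lambda_{T(1,j)}\neq j-1$.

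Next I adapt the column-counting argument of \cref{lem:RTvanishing} (now with $\mu$ in the role of $\lambda$ there, and $\lambda$ as the evaluation partition). The RT constraints $k_j\coloneqq T(1,j)\geq\mu_j'$ and weak decrease across the first row, combined with the non-vanishing condition $\lambda_{k_j}\neq j-1$, force $T$ to coincide with the distinguished RT of $\mu$ except possibly in the stretch of columns near $j_0$ where $\lambda$ and $\mu$ have different row or column lengths; the strict decrease down each column then pins down the rest of $T$. This yields an explicit, manageable family of contributing tableaux.

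Summing the $\psi_T$-weighted contributions of these tableaux produces a telescoping sum which collapses to a single product. After dividing by $H(\mu)$, the factors attached to boxes outside $C\cup R$ cancel against the corresponding $c_\mu'(s)$ in $H(\mu)$; what remains is exactly $\prod_{s\in C}c_\lambda(s)/c_\mu(s)\cdot\prod_{s\in R}c_\lambda'(s)/c_\mu'(s)$ for $\AJ$, together with the analogous $d$-ratios in $\BJ,\BM$ from the second linear factor of the difference of squares. The monomial prefactors $t^{-(i_0-1)}$ and $q^{-(j_0-1)}$ in $\AM,\BM$ arise from the mismatch between the $t$- (resp.\ $q$-) exponent of the common monomial $q^{a_\mu'(s)}t^{n-T(s)-l_\mu'(s)}$ evaluated at $\overline\lambda$ versus at $\overline\mu$ (as used in $H(\mu)$); a direct count yields exactly $i_0-1$ (resp.\ $j_0-1$).

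The main obstacle is the combinatorial bookkeeping in the second step: enumerating the non-vanishing tableaux and verifying that their $\psi_T$-weighted contributions collapse uniformly into a single product. A type-independent argument is feasible because the substitution patterns at $\overline\lambda$, the product structure of $\psi_T$, and the column-length analysis are entirely parallel across the four families. Positivity $a_{\lambda\mu}\in\fpp$ then follows at once from the explicit formula: for $s\in C$ one has $a_\lambda(s)=a_\mu(s)$ and $l_\lambda(s)=l_\mu(s)+1$, while for $s\in R$ one has $l_\lambda(s)=l_\mu(s)$ and $a_\lambda(s)=a_\mu(s)+1$; in each case the $c$- and $d$-factors are strictly positive elements of $\mathbb F$ (using the bound $l_\lambda(s)+2l_\lambda'(s)+2\leq 2n$ for $d$-positivity in $\BJ,\BM$), and the monomial prefactors lie in $\fpp$.
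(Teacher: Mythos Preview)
Your approach is genuinely different from the paper's, and it is worth recording the comparison. You attempt a direct evaluation of $h_\mu^\monic(\overline\lambda)$ from the tableau formulas \eqref{eqn:AJ-comb}--\eqref{eqn:BM-comb}: identify which reverse tableaux of shape $\mu$ survive the substitution $x=\overline\lambda$, sum their $\psi_T$-weighted contributions, and divide by $H(\mu)$. The paper instead goes through the Littlewood--Richardson side. It observes that the top-degree part of $h_\mu^\monic$ is the ordinary Jack or Macdonald polynomial, so the \emph{monic} interpolation LR coefficients for $|\lambda|=|\mu|+|\nu|$ coincide with the ordinary ones; in particular the adjacent coefficient $\tilde c_{\mu\varepsilon_1}^\lambda$ is given by the classical Pieri rule \eqref{eqn:Jack-Pieri}--\eqref{eqn:Macdonald-Pieri}. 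Converting to the unital normalization and invoking \cref{cor:abc} in the form \eqref{eqn:abc1} gives
\[
a_{\lambda\mu}=\frac{c_{\mu\varepsilon_1}^\lambda}{b_{\lambda\varepsilon_1}-b_{\mu\varepsilon_1}}
=\frac{1}{\norm{\overline\lambda}-\norm{\overline\mu}}\cdot\frac{H(\lambda)}{H(\mu)}\cdot\tilde c_{\mu\varepsilon_1}^\lambda,
\]
and the stated formulas drop out from \cref{prop:H} and \cref{table}. This is short and uniform because it outsources the combinatorics to the known Pieri rule.

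The gap in your proposal is precisely the step you flag as the main obstacle. When $\lambda\cover\mu$, several tableaux survive (not just the distinguished one), and their $\psi_T$-weighted products are genuinely different; for instance with $n=2$, $\mu=(3,1)$, $\lambda=(3,2)$, two tableaux contribute $\frac{2\tau}{1+\tau}\cdot 2(3+\tau)$ and $1\cdot 4(3+\tau)$, which only after a nontrivial rational simplification yield the product $\frac{2(1+2\tau)}{1+\tau}\cdot H(\mu)$. Your proposal asserts that such sums ``telescope'' to a single product but gives no mechanism; carrying this out in general amounts to re-deriving the Pieri rule for Jack and Macdonald polynomials from the tableau expansion, which is a substantial lemma in its own right. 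If you want to keep your route self-contained, you would need to make that collapse explicit (and then repeat or adapt it for the $d$-factors in $\BJ,\BM$). The paper's route sidesteps all of this by citing the Pieri rule and using \eqref{eqn:abc1}. Your positivity argument at the end, by contrast, is fine and matches the paper's.
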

\begin{proof}
	Comparing the combinatorial formulas \cref{eqn:AJ-comb,eqn:BJ-comb,eqn:AM-comb,eqn:BM-comb} with \cref{eqn:J-comb,eqn:M-comb}, we see that the top degrees terms of the interpolation polynomials correspond to the ordinary Jack or Macdonald polynomials, hence they have the same \emph{monic} LR coefficients.
	To be more precise, let $P_\lambda$ be the monic Jack or Macdonald polynomial. 
	Define $\tilde c_{\mu\nu}^{\lambda}$ by 
	\begin{align}\label{eqn:Jack-LR}
		P_\mu P_\nu = \sum_{\lambda} \tilde c_{\mu\nu}^{\lambda} P_\lambda.
	\end{align}
	When $\nu=\varepsilon_1$, by the Pieri rule \cite{St89,Mac15}, the LR coefficients are explicitly given by 
	\begin{gather}
		\tilde c_{\mu\varepsilon_1}^{\lambda,\text{J}} = \prod_{s\in C}\frac{b_\lambda(s;\tau)}{b_\mu(s;\tau)} = \prod_{s\in C}\frac{c_\lambda(s;\tau)}{c_\mu(s;\tau)} \prod_{s\in C}\frac{c_\mu'(s;\tau)}{c_\lambda'(s;\tau)},\quad\lambda \cover \mu;	\label{eqn:Jack-Pieri}\\
		\tilde c_{\mu\varepsilon_1}^{\lambda,\text{M}} = \prod_{s\in C}\frac{b_\lambda(s;q,t)}{b_\mu(s;q,t)} = \prod_{s\in C}\frac{c_\lambda(s;q,t)}{c_\mu(s;q,t)} \prod_{s\in C}\frac{c_\mu'(s;q,t)}{c_\lambda'(s;q,t)},\quad\lambda \cover \mu.	\label{eqn:Macdonald-Pieri}
	\end{gather}
	Since our LR coefficient $c_{\mu\nu}^\lambda$ is defined with respect to the \emph{unital} normalization, $\tilde c_{\mu\nu}^\lambda $ and $ c_{\mu\nu}^\lambda$ are related by
	\begin{align}
		c_{\mu\nu}^\lambda = \frac{H(\lambda)}{H(\mu)H(\nu)}\tilde c_{\mu\nu}^\lambda.
	\end{align}
	Then by \cref{eqn:abc1}, we have 
	\begin{align*}
		a_{\lambda\mu} 
		&=	\frac{c_{\mu\varepsilon_1}^\lambda}{b_{\lambda\varepsilon_1}-b_{\mu\varepsilon_1}}	
		=	\frac{1}{b_{\lambda\varepsilon_1}-b_{\mu\varepsilon_1}} \frac{H(\lambda)}{H(\mu)H(\varepsilon_1)} \tilde c_{\mu\varepsilon_1}^\lambda	
		=	\frac1{h_{\varepsilon_1}^{\monic}(\overline\lambda)-h_{\varepsilon_1}^{\monic}(\overline\mu)}\frac{H(\lambda)}{H(\mu)} \tilde c_{\mu\varepsilon_1}^\lambda	
		\\&=	\frac1{\norm{\overline\lambda}-\norm{\overline\mu}}\frac{H(\lambda)}{H(\mu)} \tilde c_{\mu\varepsilon_1}^\lambda.
	\end{align*} 
	The desired formulas, \cref{eqn:AJ_a,eqn:BCJ_a,eqn:AM_a,eqn:BCM_a}, follow from \cref{prop:H} and \cref{table}.
	
	To show that $a_{\lambda\mu}$ lies in $\fpp$, simply note that $c_\lambda(s)$, $c_\lambda'(s)$  lie in $\fpp$ by definition.
	As for $d_\lambda(s)$, note that $2n-(l_\lambda(s)+2l_\lambda'(s)+2)\geqslant0$ as $l_\lambda(s)+l_\lambda'(s)+1\leqslant n$.
\end{proof}

We are now ready to prove \cref{thm:b-positivity} via the weighted sum formula \cref{eqn:b-wtsum}.
\begin{proof}[\upshape\bfseries{Proof for \cref{thm:b-positivity}}]	
	By the extra vanishing property \cref{thm:extra}, if $\lambda\not\supseteq\mu$, then $b_{\lambda\mu}=0$.
	Hence it suffices to show that $b_{\lambda\mu}\in\fpp$ if $\lambda\supseteq\mu$.
	Assuming this, by \cref{eqn:b-wtsum} and the positivity of adjacent binomial coefficients, it suffices to show that for each chain $\bm\zeta\in\mathfrak C_{\lambda\mu}$, the weight 
	$$\wt(\bm \zeta)= \prod_{i=0}^{k-1} \frac{\norm{\overline{\bm \zeta_i}}-\norm{\overline{\bm \zeta_{i+1}}}}{\norm{\overline{\bm \zeta_0}}-\norm{\overline{\bm \zeta_{i+1}}}}$$ 
	lies in $\fpp$.
	
	For $\mathcal F=\AJ$, we have $\wt(\bm\zeta)=\frac{1}{k!}$ for each $\bm\zeta$, where $k=|\lambda|-|\mu|$. This result was first obtained in \cite{Sahi-Jack}.
	
	For $\mathcal F=\BJ$, assume $\nu\supsetneq\xi$, then we have
	\begin{align*}
		\norm{\overline\nu}-\norm{\overline\xi} 
		&= \sum (\nu_i+(n-i)\tau+\alpha)^2-\sum (\xi_i+(n-i)\tau+\alpha)^2 \\
		&= \sum(\nu_i+\xi_i+2(n-i)\tau+2\alpha)(\nu_i-\xi_i)\in\fpp,
	\end{align*}
	hence the weight $\wt(\bm\zeta)$ lies in $\fpp$.
	
	For $\mathcal F=\AM$, assume $\nu\supsetneq\xi$, then we have
	\begin{align*}
		\norm{\overline\nu}-\norm{\overline\xi} 
		=	\sum \(q^{\nu_i}t^{n-i}-q^{\xi_i}t^{n-i}\)
		=	\sum \(q^{\nu_i}-q^{\xi_i}\)t^{n-i}<0,
	\end{align*}
	when $q,t\in(0,1)$, hence the weight $\wt(\bm \zeta)$ lies in $\fpp$.

	For $\mathcal F=\BM$, assume $\nu\supsetneq\xi$, then we have
	\begin{align*}
		\norm{\overline\nu}-\norm{\overline\xi} 
		&= \sum \(q^{\nu_i}t^{n-i}a-q^{\xi_i}t^{n-i}a+(q^{\nu_i}t^{n-i}a)^{-1}-(q^{\xi_i}t^{n-i}a)^{-1}\) \\
		&= \sum (q^{\nu_i-\xi_i}-1)q^{\xi_i}t^{n-i}a + (1-q^{\nu_i-\xi_i})(q^{\nu_i}t^{n-i}a)^{-1}\\
		&=	\sum \frac{1-q^{\nu_i-\xi_i}}{q^{\nu_i}t^{n-i}a}\(1-q^{\nu_i+\xi_i}t^{2n-2i}a^2\)>0,
	\end{align*}
	when $q,t,a\in(0,1)$, hence the weight $\wt(\bm\zeta)$ lies in $\fpp$.
\end{proof}
Note that for the interpolation Macdonald polynomials of type $A$ and $BC$, a similar argument shows that if $q,t,a\in(1,\infty)$, then the weight $\wt(\bm\zeta)$ and the adjacent binomial coefficient $a_{\bm\zeta_i\bm\zeta_{i+1}}$ are also positive, hence so is the binomial coefficient $b_{\lambda\mu}$. 
However, if $t$ or $q$ is negative, then the adjacent binomial coefficient could be negative, due to the factors $t$ and $q$ in \cref{eqn:AM_a,eqn:BCM_a}.

\section{Proof of \texorpdfstring{\cref{thm:b-mono}}{Theorem C} via the Combinatorial Formulas}\label{sec:comb-proof}
In this section, we will prove the monotonicity theorem. 
In fact, we will prove the positivity of binomial coefficients along the way.

Note that if $\lambda\not\supseteq\nu$, then both $b_{\lambda\nu}$ and $b_{\mu\nu}$ are 0 by the extra vanishing property; and if $\lambda\supseteq\nu$ but $\mu\not\supseteq\nu$, then $b_{\lambda\nu}-b_{\mu\nu}=b_{\lambda\nu}\in\fpp$ by \cref{thm:b-positivity}.
Hence it suffices to prove $b_{\lambda\nu}-b_{\mu\nu}\in\fpp$ when $\lambda\supsetneq\mu\supseteq\nu\neq\bm0$. 
By the telescoping series technique, we may assume that $\lambda\cover\mu\supseteq\nu\neq\bm0$.

\subsection{Interpolation Jack Polynomials}
The proof is inspired by \cite[Section~7]{AF24}.
\begin{lemma}\label{lem:J-nonneg}
	Assume $\lambda\supseteq\mu$.
	Then for any RT $T$ of shape $\mu$, either the product
	\begin{align}\label{eqn:J-product}
		\prod_{s\in\mu} \(\lambda_{T(s)}-a_\mu'(s)+ l_\mu'(s)\tau\)
	\end{align}
	is identically zero, or we have $\lambda_{T(s)}> a_\mu'(s)$ for any $s\in\mu$. In particular, the product lies in $\fp$.
	Moreover, when $T$ is the distinguished RT, the product is indeed nonzero. 
\end{lemma}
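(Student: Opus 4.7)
The plan is to first establish the dichotomy \emph{either the product \eqref{eqn:J-product} is identically zero, or $\lambda_{T(s)}>a_\mu'(s)$ for every $s\in\mu$}, and then deduce the remaining assertions as corollaries. In the non-vanishing branch, every factor $\lambda_{T(s)}-a_\mu'(s)+l_\mu'(s)\tau$ has the form $c+l_\mu'(s)\tau$ with $c\in\mathbb Z_{\geqslant1}$, hence lies in $\fpp\subset\fp$, so the product does too; in the vanishing branch, the product is trivially in $\fp$. For the distinguished RT $T(i,j)=\mu_j'-i+1\leqslant\mu_j'$, I would observe that $\lambda_{T(s)}\geqslant\lambda_{\mu_j'}\geqslant\mu_{\mu_j'}\geqslant j>j-1$, which lands us in the non-vanishing branch and thus guarantees a nonzero product.

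To prove the dichotomy I would argue by contraposition: assume the product is nontrivial yet some $s_0=(i_0,j_0)\in\mu$ satisfies $\lambda_{T(s_0)}\leqslant j_0-1$, and derive a contradiction in two steps. The first step pushes $s_0$ into the top row. From $\lambda_{i_0}\geqslant\mu_{i_0}\geqslant j_0>\lambda_{T(s_0)}$ and the monotonicity of $\lambda$, one gets $T(s_0)>i_0$; the column-strict decrease of $T$ then gives $T(1,j_0)>T(s_0)$ and hence $\lambda_{T(1,j_0)}\leqslant\lambda_{T(s_0)}\leqslant j_0-1$. So we may replace $s_0$ by $(1,j_0)$ and assume $i_0=1$.

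The second step is a discrete sign-change argument along the first row. If $\lambda_{T(1,j_0)}=j_0-1$ the factor at $(1,j_0)$ is already identically zero, contradicting nontriviality, so I may assume $\lambda_{T(1,j_0)}\leqslant j_0-2$. Consider the integer-valued function $f(j)\coloneqq\lambda_{T(1,j)}-(j-1)$ on $\{1,\dots,\mu_1\}$. Because $T(1,\cdot)$ is weakly decreasing in $j$ and $\lambda$ is weakly decreasing in its index, $\lambda_{T(1,j+1)}\geqslant\lambda_{T(1,j)}$, so $f(j+1)-f(j)\geqslant-1$; together with $f(1)\geqslant0$ and $f(j_0)\leqslant-1$, this forces $f$ to hit $0$ at some $j^*\in[1,j_0]$, at which point the factor at $(1,j^*)$ vanishes identically—again contradicting nontriviality. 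The main obstacle is orchestrating this two-step reduction cleanly; everything after it is bookkeeping with the RT axioms and the containment $\lambda\supseteq\mu$.
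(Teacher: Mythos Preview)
Your proof is correct and follows essentially the same approach as the paper: reduce to the first row (the paper notes directly that $\lambda_{T(i,j)}\geqslant\lambda_{T(1,j)}$ from column-strictness, while your detour through $T(s_0)>i_0$ is correct but unnecessary), then run the identical discrete step-by-at-most-one argument on $j\mapsto\lambda_{T(1,j)}-(j-1)$, and handle the distinguished RT by the same chain $\lambda_{T(s)}\geqslant\mu_{\mu_j'}\geqslant j$. One small slip: the strict inequality $T(1,j_0)>T(s_0)$ fails when $i_0=1$, but in that case $s_0$ is already in the first row and the conclusion $\lambda_{T(1,j_0)}\leqslant j_0-1$ is immediate, so the argument survives.
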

\begin{proof}
	Note that $\lambda_{T(i,j)}\geqslant\lambda_{T(1,j)}$, $a_\mu'(i,j)=a_\mu'(1,j)$, and $l_\mu'(i,j)\geqslant l_\mu'(1,j)=0$ for $(i,j)\in\mu$, then it suffices to consider the sequence $(\lambda_{T(1,j)} -a_\mu'(1,j))_{1\leqslant j\leqslant\mu_1}$.
	
	Assume that $\lambda_{T(1,j_0)}-a_\mu'(1,j_0) <0$ for some $j_0$.
	Then
	$$(\lambda_{T(1,j)} -a_\mu'(1,j)) - (\lambda_{T(1,j-1)} -a_\mu'(1,j-1)) = \lambda_{T(1,j)}-\lambda_{T(1,j-1)}-1\geqslant-1.$$
	Since the sequence starts at $\lambda_{T(1,1)}\geqslant0$ and contains $\lambda_{T(1,j_0)}-a_\mu'(1,j_0)<0$, it must contain 0 as well.
	In other words, either the sequence contains 0 or it consists of numbers in $\mathbb Z_{>0}$, and we are done.
	
	When $T$ is the distinguished RT, for any $s=(i,j)\in\mu$, we have 
	\begin{align*}
		\lambda_{T(s)}-a_\mu'(s) \geqslant\mu_{T(s)}-a_\mu'(s) = \mu_{\mu_j'-i+1}-j+1\geqslant\mu_{\mu_j'}-j+1\geqslant1.
		\tag*{\qedhere}
	\end{align*}
\end{proof}
\begin{proof}[\upshape\bfseries{Proof of \cref{thm:b-mono} for $\mathcal F=\AJ$}]
	First, we prove positivity. Assume $\lambda\supseteq\mu$.
	Evaluating \cref{eqn:AJ-comb} (with $\lambda$ replaced by $\mu$) at $\overline\lambda=\lambda+\tau\delta$, we get
	\begin{align}
		H(\mu)b_{\lambda\mu} = h_\mu^\monic(\overline\lambda;\tau) &= \sum_T \psi_T(\tau) \prod_{s\in\mu}\(\lambda_{T(s)}-a_\mu'(s)+l_\mu'(s)\tau\).	\label{eqn:AJ-b-comb}
	\end{align}
	Note that $H(\mu)\in\fpp$ by \cref{prop:H}.
	For any RT $T$, it follows from the definition that $\psi_T(\tau)\in\fpp$ and hence by \cref{lem:J-nonneg}, $b_{\lambda\mu}\in\fpp$.
	
	Now we prove monotonicity. 
	Assume that $\lambda\cover\mu\supseteq\nu\neq\bm0$ and that $\lambda$ and $\mu$ differ in the $i_0$th row.
	By \cref{eqn:AJ-b-comb}, we have	
	\begin{align*}
		&\=	H(\nu)\cdot \(b_{\lambda\nu} - b_{\mu\nu}\) \\
		&= 
		\sum_T \psi_T(\tau) \Bigg(\prod_{s\in\nu} \(\lambda_{T(s)}-a_\nu'(s)+l_\nu'(s)\tau\)- \prod_{s\in\nu}\(\mu_{T(s)}-a_\nu'(s)+l_\nu'(s)\tau\)\Bigg)	\\
		&=	\sum_T \psi_T(\tau) \prod_{\substack{s\in\nu\\T(s)\neq i_0}} \(\mu_{T(s)}-a_\nu'(s)+l_\nu'(s)\tau\) \\
		&\=	\cdot	\Bigg(\prod_{\substack{s\in\nu\\T(s)=i_0}} \(\mu_{T(s)}+1-a_\nu'(s)+l_\nu'(s)\tau\) - \prod_{\substack{s\in\nu\\T(s)=i_0}}\(\mu_{T(s)}-a_\nu'(s)+l_\nu'(s)\tau\)\Bigg). 
	\end{align*}
	By \cref{lem:J-nonneg}, for any RT $T$ that gives a nonzero product, the numbers $\mu_{T(s)}-a_\nu'(s)>0$ for $s\in\nu$, hence $b_{\lambda\nu}-b_{\mu\nu}$ lies in $\fpp$.
\end{proof}

\begin{proof}[\upshape\bfseries{Proof of \cref{thm:b-mono} for $\mathcal F=\BJ$}]
	First, we prove positivity. Assume $\lambda\supseteq\mu$.
	Evaluating \cref{eqn:BJ-comb} (with $\lambda$ replaced by $\mu$) at $\overline\lambda$, we get
	\begin{align}\label{eqn:BCJ-b-comb}
		&\=	H(\mu) b_{\lambda\mu} =	h_\mu^\monic(\overline\lambda;\tau,\alpha) \notag\\
		&= \sum_T \psi_T(\tau) \prod_{s\in\mu}\(\(\lambda_{T(s)}+(n-T(s))\tau+\alpha\)^2-\(a_\mu'(s)+(n-T(s)-l_\mu'(s))\tau+\alpha\)^2\)	\notag\\
		&= \sum_T \psi_T(\tau) \prod_{s\in\mu} \(\lambda_{T(s)}-a_\mu'(s)+l_\mu'(s)\tau\) \(\lambda_{T(s)}+a_\mu'(s)+(2n-2T(s)-l_\mu'(s))\tau+2\alpha\).
	\end{align}
	We again have that $H(\mu)$ and $\psi_T$ lie in $\fpp$.
	By \cref{lem:J-nonneg}, the product \\
	$\displaystyle\prod_{s\in\mu}\(\lambda_{T(s)}-a_\mu'(s)+l_\mu'(s)\tau\)$ lies in $\fp$ and the one indexed by the distinguished RT is nonzero. 
	Also it is evident that  $\displaystyle\prod_{s\in\mu}\(\lambda_{T(s)}+a_\mu'(s)+(2n-2T(s)-l_\mu'(s))\tau+2\alpha\)$ lies in $\fpp$ since $T(s)+l_\mu'(s)\leqslant n$. 
	We conclude that $b_{\lambda\mu}\in\fpp$. 
	
	We now prove monotonicity. 
	We may assume that $\lambda\cover\mu\supseteq\nu\neq\bm0$ and that $\lambda$ and $\mu$ differ in the $i_0$th row.
	By \cref{eqn:BCJ-b-comb}, we have
	\begin{align*}
		&\=	H(\nu)\cdot\(b_{\lambda\nu}-b_{\mu\nu}\) \\
		&= \sum_T \psi_T(\tau)	\\
		&\=	\cdot \prod_{\substack{s\in\nu\\T(s)\neq i_0}}  \(\mu_{T(s)}-a_\nu'(s)+l_\nu'(s)\tau\) \(\mu_{T(s)}+a_\nu'(s)+(2n-2T(s)-l_\nu'(s))\tau+2\alpha\)	\\
		&\=	\cdot \Bigg(	\prod_{\substack{s\in\nu\\T(s)= i_0}} \(\mu_{T(s)}+1-a_\nu'(s)+l_\nu'(s)\tau\) \(\mu_{T(s)}+1+a_\nu'(s)+(2n-2T(s)-l_\nu'(s))\tau+2\alpha\)\\
		&\=	-\prod_{\substack{s\in\nu\\T(s)= i_0}} \(\mu_{T(s)}-a_\nu'(s)+l_\nu'(s)\tau\) \(\mu_{T(s)}+a_\nu'(s)+(2n-2T(s)-l_\nu'(s))\tau+2\alpha\)\Bigg).
	\end{align*}
	As argued in the case of $\AJ$, for any RT $T$ that gives a nonzero summand, the numbers $\mu_{T(s)}-a_\nu'(s)>0$ for $s\in\nu$, hence $b_{\lambda\nu}-b_{\mu\nu}$ lies in $\fpp$.
\end{proof}

\subsection{Interpolation Macdonald Polynomials}
\begin{lemma}\label{lem:M-nonneg}
	Assume $\lambda\supseteq\mu$. Let $q,t\in(0,1)$.
	Then for any RT $T$ of shape $\mu$, either the product
	\begin{align}\label{eqn:M-product}
		\prod_{s\in\mu} \(1-q^{\lambda_{T(s)}-a_\mu'(s)}t^{l_\mu'(s)}\)
	\end{align}
	is identically zero, or we have $\lambda_{T(s)}> a_\mu'(s)$ for any $s\in\mu$. In particular, the product lies in $\fp$.
	Moreover, when $T$ is the distinguished RT, the product is indeed nonzero. 
\end{lemma}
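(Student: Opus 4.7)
The plan is to mirror the proof of the Jack analog \cref{lem:J-nonneg}. The combinatorial structure is the same: the factor $1-q^{\lambda_{T(s)}-a_\mu'(s)}t^{l_\mu'(s)}$ plays exactly the role of $\lambda_{T(s)}-a_\mu'(s)+l_\mu'(s)\tau$. The key point is that $1-q^{N}t^{L}$ is (i) $=0$ when $N=L=0$, (ii) $>0$ when $N\geqslant 1$, $L\geqslant 0$, $q,t\in(0,1)$, and (iii) this is the only way for a factor to vanish identically on the locus $q,t\in(0,1)$.

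First I would reduce to the first row, exactly as in the Jack proof. For any $s=(i,j)\in\mu$, column-strict decrease of the RT $T$ gives $T(i,j)\leqslant T(1,j)$, hence $\lambda_{T(i,j)}\geqslant\lambda_{T(1,j)}$; moreover $a_\mu'(i,j)=j-1=a_\mu'(1,j)$ and $l_\mu'(i,j)\geqslant 0$. So it suffices to analyze the integer sequence
\begin{equation*}
\sigma_j \coloneqq \lambda_{T(1,j)}-a_\mu'(1,j)=\lambda_{T(1,j)}-(j-1), \quad 1\leqslant j\leqslant\mu_1.
\end{equation*}

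Next I would observe the two structural properties of this sequence: $\sigma_1=\lambda_{T(1,1)}\geqslant 0$, and $\sigma_j-\sigma_{j-1}\geqslant -1$ (because $T(1,\cdot)$ is weakly decreasing, so $\lambda_{T(1,j)}\geqslant\lambda_{T(1,j-1)}$). Hence either $\sigma_j\geqslant 1$ for every $j$, or the sequence must take the value $0$ at some index $j_0$. In the latter case the corresponding first-row box $s=(1,j_0)$ has $l_\mu'(s)=0$, so the factor $1-q^{0}t^{0}=0$ kills the whole product identically. In the former case, for each $s=(i,j)\in\mu$ we get $\lambda_{T(s)}-a_\mu'(s)\geqslant\sigma_j\geqslant 1$, which together with $l_\mu'(s)\geqslant 0$ gives $q^{\lambda_{T(s)}-a_\mu'(s)}t^{l_\mu'(s)}<1$ on $(0,1)^2$, so every factor lies in $\fpp$ and the product lies in $\fpp\subset\fp$.

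Finally, for the distinguished RT, \cref{eqn:distinguishedRT} gives $T(1,j)=\mu_j'$, so $\lambda_{T(1,j)}\geqslant\mu_{\mu_j'}\geqslant j$ by the definition of the conjugate partition, whence $\sigma_j\geqslant 1$ for all $j$ and the product is genuinely nonzero. I do not anticipate any real obstacle: the argument is a direct transcription of the $\AJ$ lemma, with the single subtlety being the verification that the only identically vanishing factor is $1-q^0 t^0$, i.e.\ that a factor $1-q^N t^L$ with $(N,L)\ne(0,0)$ cannot vanish on $(0,1)^2$, which is immediate.
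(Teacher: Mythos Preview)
Your proposal is correct and follows exactly the approach the paper intends: the paper's own proof simply says the argument is parallel to \cref{lem:J-nonneg} and notes that the vanishing condition for a factor $1-q^{\lambda_{T(s)}-a_\mu'(s)}t^{l_\mu'(s)}$ is the same as for $\lambda_{T(s)}-a_\mu'(s)+l_\mu'(s)\tau$, namely $\lambda_{T(s)}-a_\mu'(s)=0=l_\mu'(s)$. You have written out in full the reduction to the first-row sequence $\sigma_j$, the ``drops by at most one'' step, and the distinguished-RT verification, which is precisely the transcription the paper omits.
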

\begin{proof}
	This lemma is parallel to \cref{lem:J-nonneg}, and the proof is omitted. Simply note that in the case of \cref{lem:J-nonneg}, $\lambda_{T(s)}-a_\mu'(s)+l_\mu'(s)\tau=0$ if and only if $\lambda_{T(s)}-a_\mu'(s)=0=l_\mu'(s)$; and in this case, $1-q^{\lambda_{T(s)}-a_\mu'(s)}t^{l_\mu'(s)}=0$ if and only if $\lambda_{T(s)}-a_\mu'(s)=0=l_\mu'(s)$.
\end{proof}
\begin{proof}[\upshape\bfseries{Proof of \cref{thm:b-mono} for $\mathcal F=\AM$}]
	We first prove positivity.
	Assume $\lambda\supseteq\mu$. 
	Evaluating \cref{eqn:AM-comb} (with $\lambda$ replaced by $\mu$) at $\overline\lambda$, since $H(\mu)$ is given by the distinguished RT $T_0$, we have
	\begin{align*}
		b_{\lambda\mu} &= \frac{h_\mu^\monic(\overline\lambda;q,t)}{H(\mu)} \\
		&=	\sum_T \psi_T(q,t) \prod_{s\in\mu} \frac{ \(-q^{a_\mu'(s)}t^{n-T(s)-l_\mu'(s)}\) \(1-q^{\lambda_{T(s)}-a_\mu'(s)}t^{l_\mu'(s)}\)}{H(\mu)}	\\
		&=	\sum_T \psi_T(q,t) \prod_{s\in\mu} t^{T_0(s)-T(s)}\frac{1-q^{\lambda_{T(s)}-a_\mu'(s)}t^{l_\mu'(s)}}{1-q^{\mu_{T_0(s)}-a_\mu'(s)}t^{l_\mu'(s)}}.
	\end{align*}
	By definition, $\psi_T(q,t)\in\fpp$. 
	Then by \cref{lem:M-nonneg}, $b_{\lambda\mu}\in\fp$.
	
	Now we prove the monotonicity. Assume that  $\lambda\cover\mu\supseteq\nu\neq\bm0$ and that $\lambda$ and $\mu$ differ in the $i_0$th row.
	We have
	\begin{align*}
		&\=	b_{\lambda\nu}-b_{\mu\nu}	\\
		&=	\sum_T \psi_T(q,t) \Bigg(\prod_{s\in\nu} t^{T_0(s)-T(s)}\frac{1-q^{\lambda_{T(s)}-a_\nu'(s)}t^{l_\nu'(s)}}{1-q^{\nu_{T_0(s)}-a_\nu'(s)}t^{l_\nu'(s)}} - \prod_{s\in\nu} t^{T_0(s)-T(s)}\frac{1-q^{\mu_{T(s)}-a_\nu'(s)}t^{l_\nu'(s)}}{1-q^{\nu_{T_0(s)}-a_\nu'(s)}t^{l_\nu'(s)}}\Bigg)	\\
		&=	\sum_T \psi_T(q,t)  \prod_{s\in\nu} \frac{t^{T_0(s)-T(s)}}{1-q^{\nu_{T_0(s)}-a_\mu'(s)}t^{l_\mu'(s)}} \prod_{\substack{s\in\nu\\T(s)\neq i_0}}\(1-q^{\mu_{T(s)}-a_\nu'(s)}t^{l_\nu'(s)}\)	\\
		&\=	\cdot \Bigg(\prod_{\substack{s\in\nu\\T(s)=i_0}} \(1-q^{\mu_{T(s)}+1-a_\nu'(s)}t^{l_\nu'(s)}\) - \prod_{\substack{s\in\nu\\T(s)=i_0}} \(1-q^{\mu_{T(s)}-a_\nu'(s)}t^{l_\nu'(s)}\)\Bigg).
	\end{align*}
	By \cref{lem:M-nonneg}, for $T$ giving a nonzero product, we have $\mu_{T(s)}-a_\nu'(s)>0$, hence $b_{\lambda\nu}-b_{\mu\nu}\in\fpp$ by the assumption that $q,t\in(0,1)$. 
	In fact, by a similar argument, if we assume $q,t\in(1,\infty)$, we still have $b_{\lambda\nu}-b_{\mu\nu}>0$ since we have an equal number of factors of the form $1-q^{a}t^{b}$ in the numerator and the denominator.
\end{proof}

\begin{proof}[\upshape\bfseries{Proof of \cref{thm:b-mono} for $\mathcal F=\BM$}]
	We first prove the positivity. Assume $\lambda\supseteq\mu$.
	Evaluating \cref{eqn:BM-comb} (with $\lambda$ replaced by $\mu$) at $\overline\lambda$, since $H(\mu)$ is given by the distinguished RT $T_0$, we have
	\begin{align*}
		b_{\lambda\mu} &= \frac{h_\mu^\monic(\overline\lambda;q,t,a)}{H(\mu)} \\
		&=	\sum_T \psi_T(q,t) \prod_{s\in\mu} \frac{t^{T(s)-T_0(s)}}{q^{\lambda_{T(s)}-\mu_{T_0(s)}}}
		\frac{1-q^{\lambda_{T(s)}+a_\mu'(s)}t^{2n-2T(s)-l_\mu'(s)}a^2}{1-q^{\mu_{T_0(s)}+a_\mu'(s)}t^{2n-2T_0(s)-l_\mu'(s)}a^2}
		\frac{1-q^{\lambda_{T(s)}-a_\mu'(s)}t^{l_\mu'(s)}} {1-q^{\mu_{T_0(s)}-a_\mu'(s)}t^{l_\mu'(s)}}.
	\end{align*}
	We have $\psi_T(q,t)\in\fpp$. The product \cref{eqn:M-product} is in $\fp$ by \cref{lem:M-nonneg}, and the remaining factors are in $\fpp$ since the exponents are positive.
	It follows immediately that $b_{\lambda\mu}\in\fpp$.
	Now we prove the monotonicity. Assume $\lambda\cover\mu\supseteq\nu\neq0$ and that $\lambda$ and $\mu$ differ in the $i_0$th row.
	We have
	\begin{align*}
		&\=	b_{\lambda\nu}-b_{\mu\nu}	\\
		&=	\sum_T \psi_T(q,t) 
		\prod_{s\in\nu}	\frac{t^{T(s)-T_0(s)}q^{\nu_{T_0(s)}}}{1-q^{\nu_{T_0(s)}+a_\nu'(s)}t^{2n-2T_0(s)-l_\nu'(s)}a^2} \frac{1} {1-q^{\nu_{T_0(s)}-a_\nu'(s)}t^{l_\nu'(s)}}	\cdot\\
		&\=	\prod_{\substack{s\in\nu\\T(s)\neq i_0}} \frac{\(1-q^{\mu_{T(s)}+a_\nu'(s)}t^{2n-2T(s)-l_\nu'(s)}a^2\)\(1-q^{\mu_{T(s)}-a_\nu'(s)}t^{l_\nu'(s)}\)} {q^{\mu_{T(s)}}} \cdot	\\
		&\=		 \Bigg(\prod_{\substack{s\in\nu\\T(s)=i_0}} \frac{\(1-q^{\mu_{T(s)}+1+a_\nu'(s)}t^{2n-2T(s)-l_\nu'(s)}a^2\)\(1-q^{\mu_{T(s)}+1-a_\nu'(s)}t^{l_\nu'(s)}\)} {q^{\mu_{T(s)}+1}}	\\
		&\=\phantom{\Bigg(}	- \prod_{\substack{s\in\nu\\T(s)=i_0}} \frac{\(1-q^{\mu_{T(s)}+a_\nu'(s)}t^{2n-2T(s)-l_\nu'(s)}a^2\)\(1-q^{\mu_{T(s)}-a_\nu'(s)}t^{l_\nu'(s)}\)} {q^{\mu_{T(s)}}}\Bigg).
	\end{align*}
	For any $T$ giving a nonzero product, we have $\mu_{T(s)}-a_\nu'(s)>0$, hence
	\begin{align*}
		&\=	\prod_{\substack{s\in\nu\\T(s)=i_0}}\frac{\(1-q^{\mu_{T(s)}+1+a_\nu'(s)}t^{2n-2T(s)-l_\nu'(s)}a^2\)\(1-q^{\mu_{T(s)}+1-a_\nu'(s)}t^{l_\nu'(s)}\)} {q^{\mu_{T(s)}+1}}	\\
		&>	\prod_{\substack{s\in\nu\\T(s)=i_0}}\frac{\(1-q^{\mu_{T(s)}+1+a_\nu'(s)}t^{2n-2T(s)-l_\nu'(s)}a^2\)\(1-q^{\mu_{T(s)}+1-a_\nu'(s)}t^{l_\nu'(s)}\)} {q^{\mu_{T(s)}}}	\\
		&>	\prod_{\substack{s\in\nu\\T(s)=i_0}} \frac{\(1-q^{\mu_{T(s)}+a_\nu'(s)}t^{2n-2T(s)-l_\nu'(s)}a^2\)\(1-q^{\mu_{T(s)}-a_\nu'(s)}t^{l_\nu'(s)}\)} {q^{\mu_{T(s)}}},
	\end{align*}
	In conclusion, $b_{\lambda\nu}-b_{\mu\nu}\in\fpp$.
	As in the case of $\AM$, if we assume instead $q,t,a\in(1,\infty)$, we still have $b_{\lambda\nu}-b_{\mu\nu}>0$.
\end{proof}

\section{Applications and Future Extensions}\label{sec:app}
\subsection{Positivity and Inequality}\label{sec:app-bino}
\subsubsection{Jack polynomials}
In this subsection, we consider the monic Jack polynomials $P_\lambda(x;\tau)$ and the binomial coefficients $\binom{\lambda}{\mu}=\binom{\lambda}{\mu}_\tau$ of family $\AJ$.
Recall that the cone of positivity $\fp$ is defined by \cref{eqn:AJ-F}.

Also recall that for partitions $\lambda$ and $\mu$ (as before, written as $n$-tuples), we say $\lambda$ \textbf{weakly dominates} (or, \textbf{weakly majorizes}) $\mu$, if $\sum_{i=1}^r \lambda_i\geqslant\sum_{i=1}^r \mu_i$, for $1\leqslant r\leqslant n$; if, in addition, $|\lambda|=|\mu|$, we say $\lambda$ \textbf{dominates} (or, \textbf{majorizes}) $\mu$.

Let $\bm1=(1,\dots,1)$ ($n$ times).
The following binomial formula is proved in \cite{OO97}. 
\begin{align}\label{eqn:AJ-binomial1}
	\frac{P_\lambda(x+\bm1;\tau)}{P_\lambda(\bm1;\tau)} = \sum_{\mu\subseteq\lambda} \binom{\lambda}{\mu} \frac{P_\mu(x;\tau)}{P_\mu(\bm1;\tau)}.
\end{align}
The normalization $P_\lambda(x;\tau)/P_\lambda(\bm1;\tau)$ is sometimes called \textbf{unital}, as it maps $\bm1$ to $1$.

As a direct application of the monotonicity of binomial coefficients (\cref{thm:b-mono}), we have the following duality between the containment order on partitions and Jack positivity on symmetric functions.
\begin{theorem}[\cref{thm:contain}]\label{thm:Jack-positive}
	The following statements are equivalent:
	\begin{enumerate}
		\item $\lambda$ contains $\mu$.
		\item The difference $\displaystyle \frac{P_\lambda(x+\bm1;\tau)}{P_\lambda(\bm1;\tau)}-\frac{P_\mu(x+\bm1;\tau)}{P_\mu(\bm1;\tau)}$ is \textbf{Jack positive}, namely, can be written as an $\fp$-combination of Jack polynomials $P_\nu(x;\tau)$.
		\item For any fixed $\tau_0\in[0,\infty]$, the difference  $\displaystyle \frac{P_\lambda(x+\bm1;\tau_0)}{P_\lambda(\bm1;\tau_0)}-\frac{P_\mu(x+\bm1;\tau_0)}{P_\mu(\bm1;\tau_0)}$ is $\tau_0$-\textbf{Jack positive}, namely, can be written as an $\mathbb R_{\geqslant0}$-combination of Jack polynomials $P_\nu(x;\tau_0)$.
	\end{enumerate}
\end{theorem}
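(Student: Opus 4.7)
The plan is to establish the cyclic implications $(1) \Rightarrow (2) \Rightarrow (3) \Rightarrow (1)$. My main engine will be the Okounkov--Olshanski binomial theorem \eqref{eqn:AJ-binomial1}, combined with the monotonicity of binomial coefficients from \cref{thm:b-mono}. Subtracting two instances of \eqref{eqn:AJ-binomial1} and invoking the extra vanishing property to harmonize the summation ranges yields the identity
\begin{equation*}
    \frac{P_\lambda(x+\bm1;\tau)}{P_\lambda(\bm1;\tau)} - \frac{P_\mu(x+\bm1;\tau)}{P_\mu(\bm1;\tau)} = \sum_\nu \frac{b_{\lambda\nu}-b_{\mu\nu}}{P_\nu(\bm1;\tau)}\, P_\nu(x;\tau),
\end{equation*}
which will drive all three implications.

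For $(1) \Rightarrow (2)$, I would observe that under $\lambda \supseteq \mu$, \cref{thm:b-mono} places each numerator $b_{\lambda\nu}-b_{\mu\nu}$ in $\fp$, while Stanley's product formula expresses $P_\nu(\bm1;\tau)$ as a product of linear factors that all lie in $\fpp$. Hence every coefficient in the expansion lies in $\fp$, yielding Jack positivity. The implication $(2) \Rightarrow (3)$ is essentially formal: evaluating an $\fp$-coefficient at $\tau_0 \in [0,\infty]$ gives a value in $[0,\infty]$, and the explicit form above shows these values remain finite, producing a non-negative real linear combination of $\tau_0$-Jack polynomials.

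For $(3) \Rightarrow (1)$, I will argue by contrapositive. Suppose $\lambda \not\supseteq \mu$. The extra vanishing property forces $b_{\lambda\mu}(\tau_0) = 0$, while $b_{\mu\mu}(\tau_0) = 1$, so in the displayed identity the coefficient of $P_\mu(x;\tau_0)$ equals $-1/P_\mu(\bm1;\tau_0) < 0$. Since the family $\{P_\nu(\cdot;\tau_0)\}$ forms an $\mathbb{R}$-linear basis of symmetric polynomials for every $\tau_0 \in [0,\infty]$ (degenerating to a scalar multiple of $e_{\nu'}$ at $\tau_0 = 0$ and to $m_\nu$ at $\tau_0 = \infty$), the basis expansion is unique, and the negative coefficient of $P_\mu$ obstructs $\tau_0$-Jack positivity.

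The main technical point will be to handle the boundary cases $\tau_0 \in \{0,\infty\}$ carefully: Jack polynomials there collapse onto other bases, and I will need to check that the denominators $P_\nu(\bm1;\tau_0)$ remain finite and strictly positive (which follows from standard product formulas) and that the basis property persists in the limit (using that $e_{\nu'}$ and $m_\nu$ are themselves bases of the space of symmetric polynomials). Once these routine verifications are in place, the cyclic chain closes and the theorem follows.
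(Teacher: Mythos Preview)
Your proposal is correct and follows essentially the same route as the paper: both subtract two instances of the Okounkov--Olshanski binomial formula, invoke \cref{thm:b-mono} for $(1)\Rightarrow(2)$, specialize to $\tau_0$ for $(2)\Rightarrow(3)$, and for $(3)\Rightarrow(1)$ read off the negative coefficient of $P_\mu$ in the (unique) basis expansion. One small slip: in the paper's convention $\tau=1/\alpha$, the Jack polynomial $P_\nu(x;\tau)$ degenerates to $m_\nu$ at $\tau_0=0$ and to $e_{\nu'}$ at $\tau_0=\infty$, not the other way around---but this does not affect your argument since both families are bases.
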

\begin{proof}
	Note that $P_\lambda(\bm1;\tau)\in\fpp$ by \cite[VI.~(10.20)]{Mac15} or \cref{eqn:J-comb}, and $P_\lambda(\bm1;\tau_0)>0$ (it suffices to check for $\tau_0=0$ and $\infty$).
	
	We first show that (1)$\implies$(2).
	If $\lambda\supseteq\mu$, then by the binomial formula \cref{eqn:AJ-binomial1}, we have
	\begin{align*}
		\frac{P_\lambda(x+\bm1;\tau)}{P_\lambda(\bm1;\tau)} -\frac{P_\mu(x+\bm1;\tau)}{P_\mu(\bm1;\tau)} = \sum_{\nu\subseteq\lambda} \(\binom{\lambda}{\nu}-\binom{\mu}{\nu}\) \frac{P_\nu(x;\tau)}{P_\nu(\bm1;\tau)}.
	\end{align*}
	By \cref{thm:b-mono}, the coefficient $\binom{\lambda}{\nu}-\binom{\mu}{\nu}$ lies in $\fp$.
	
	(2)$\implies$(3) is clear since functions in $\fp$ have non-negative evaluation at $\tau_0\in[0,\infty]$. (When $\tau=0$ and $\infty$, $\binom{\lambda}{\nu}-\binom{\mu}{\nu}$ is finite.)
	
	(3)$\implies$(1): 
	Assume that $\lambda$ does not contain $\mu$, then $\binom{\lambda}{\mu}=0$.	
	Since $\Set{P_\lambda(x;\tau_0)}{\lambda\in\mathcal P_n}$ forms an $\mathbb R$-basis for $\mathbb R[x_1,\dots,x_n]^{S_n}$, the difference would contain a term $-P_\mu(x;\tau_0)/P_\mu(\bm1;\tau_0)$, and hence it is not $\tau_0$-Jack positive.
\end{proof}

It is well-known, see \cite[Chapters I, VI and VII]{Mac15}, that Jack polynomials $P_\lambda(x;\tau)$ specialize to many symmetric polynomials: 
monomial symmetric polynomials $m_\lambda$ when $\tau=0$, zonal polynomials $C_\lambda$ when $\tau=1/2$ (over $\mathbb R$) and $2$ (over $\mathbb H$), Schur polynomials $s_\lambda$ when $\tau=1$, and elementary symmetric polynomials $e_{\lambda'}$ when $\tau=\infty$ (where $\lambda'$ is the transpose of $\lambda$).
Hence we have the following inequalities.
\begin{theorem}\label{thm:contain1}
	The following statements are equivalent:
	\begin{enumerate}
		\item $\lambda$ contains $\mu$.
		\item The difference $\displaystyle \frac{m_\lambda(x+\bm1)}{m_\lambda(\bm1)}-\frac{m_\mu(x+\bm1)}{m_\mu(\bm1)}$ is \textbf{monomial positive}.
		\item The difference $\displaystyle \frac{C_\lambda(x+\bm1)}{C_\lambda(\bm1)}-\frac{C_\mu(x+\bm1)}{C_\mu(\bm1)}$ is \textbf{zonal positive}.
		\item The difference $\displaystyle \frac{s_\lambda(x+\bm1)}{s_\lambda(\bm1)}-\frac{s_\mu(x+\bm1)}{s_\mu(\bm1)}$ is \textbf{Schur positive}.
		\item The difference $\displaystyle \frac{e_{\lambda'}(x+\bm1)}{e_{\lambda'}(\bm1)}-\frac{e_{\mu'}(x+\bm1)}{e_{\mu'}(\bm1)}$ is \textbf{elementary positive}.
	\end{enumerate}
\end{theorem}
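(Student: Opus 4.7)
The plan is to deduce \cref{thm:contain1} as a direct corollary of \cref{thm:Jack-positive} (equivalently \cref{thm:contain}) by specializing the Jack parameter $\tau$ at the four distinguished values at which the (monic) Jack polynomial degenerates to a classical basis. Namely, it is standard (see \cite[Chapters I, VI, VII]{Mac15}) that, under the monic normalization, $P_\lambda(x;0)=m_\lambda(x)$, $P_\lambda(x;1)=s_\lambda(x)$, both $P_\lambda(x;1/2)$ and $P_\lambda(x;2)$ are positive scalar multiples of $Z_\lambda(x)$, and the limit $\lim_{\tau\to\infty}P_\lambda(x;\tau)/P_\lambda(\bm1;\tau)=e_{\lambda'}(x)/e_{\lambda'}(\bm1)$ holds.

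For each $\tau_0\in\{0,1/2,1,2\}$, I would first note that $P_\lambda(\bm1;\tau_0)$ is a strictly positive real number (e.g., by the combinatorial formula \cref{eqn:J-comb}), and that the normalized polynomial $P_\lambda(x;\tau_0)/P_\lambda(\bm1;\tau_0)$ coincides with $m_\lambda(x)/m_\lambda(\bm1)$, $Z_\lambda(x)/Z_\lambda(\bm1)$, or $s_\lambda(x)/s_\lambda(\bm1)$ respectively (any overall positive scalar cancels between numerator and denominator). Since each of $\{m_\lambda\}$, $\{Z_\lambda\}$, and $\{s_\lambda\}$ is an $\mathbb{R}$-basis of the ring of symmetric polynomials in $n$ variables, the $\tau_0$-Jack positivity appearing in statement (3) of \cref{thm:Jack-positive} translates directly into monomial, Zonal, and Schur positivity, respectively. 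Combined with the equivalence (1)$\iff$(3) of \cref{thm:Jack-positive}, this yields the equivalences (1)$\iff$(2), (1)$\iff$(3), and (1)$\iff$(4) of \cref{thm:contain1}.

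For the elementary case, I would take $\tau_0=\infty$ in \cref{thm:Jack-positive}. Here one uses the limit $\lim_{\tau\to\infty}P_\lambda(x;\tau)/P_\lambda(\bm1;\tau)=e_{\lambda'}(x)/e_{\lambda'}(\bm1)$, which can be deduced from the combinatorial formula \cref{eqn:J-comb} by identifying the reverse tableaux contributing to the highest power of $\tau$ in $\psi_T$, or alternatively from the $\omega_\alpha$-duality of Jack polynomials in \cite[VI.(5.1), VI.(10.17)]{Mac15}. Since $\{e_{\lambda'}\}_{\lambda\in\mathcal P_n}$ is again an $\mathbb{R}$-basis, statement (3) of \cref{thm:Jack-positive} at $\tau_0=\infty$ is equivalent to statement (5) of \cref{thm:contain1}, completing the chain of equivalences. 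The main obstacle I anticipate is making the $\tau\to\infty$ limit precise and verifying that both $P_\lambda(x;\tau)$ and $P_\lambda(\bm1;\tau)$ share the same positive leading power of $\tau$, so that the ratio has the asserted finite positive limit; once this technical point is settled, everything else reduces to routine bookkeeping of strictly positive scalars and a direct appeal to the appropriate specialization of \cref{thm:Jack-positive}.
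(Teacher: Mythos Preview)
Your proposal is correct and matches the paper's own approach exactly: the paper states \cref{thm:contain1} immediately after recalling that Jack polynomials specialize to $m_\lambda$, $Z_\lambda$, $s_\lambda$, $e_{\lambda'}$ at $\tau=0,\tfrac12,1,2,\infty$ respectively, and treats the result as the specialization of \cref{thm:Jack-positive} at these values, without further argument. Your extra care about the $\tau\to\infty$ limit and the cancellation of positive scalars in the normalized polynomials is appropriate but not something the paper spells out; the paper already allows $\tau_0=\infty$ in \cref{thm:Jack-positive} and implicitly relies on the same facts you identify.
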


We now extend \cref{thm:contain1} to power sums.
As usual, let $x=(x_1,\dots,x_n)$.
For $1\leqslant k\leqslant n$, let 
\begin{align}
	p_k(x)=\sum_{i=1}^n x_i^k,	\quad p_\lambda=p_{\lambda_1}\cdots p_{\lambda_l},
\end{align}
where $l=\ell(\lambda)$.
We set $p_{\bm0}=n$.
In the case of $n$ variables, power sums $p_1,\dots,p_n$ are algebraically independent (but not $p_1,\dots,p_n,p_{n+1},\dots$), and $\Set{p_{\nu'}}{\nu\in\mathcal P_n}$ forms a $\mathbb Q$-basis for $\mathbb Q[x_1,\dots,x_n]^{S_n}$.

\begin{theorem}\label{thm:contain-powersum}
	$\lambda$ contains $\mu$ if and only if the difference $\displaystyle \frac{p_\lambda(x+\bm1)}{p_\lambda(\bm1)} - \frac{p_\mu(x+\bm1)}{p_\mu(\bm1)}$ is power sum positive, when expressed in the basis $\Set{p_{\nu'}}{\nu\in\mathcal P_n}$. 
\end{theorem}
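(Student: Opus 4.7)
The plan is to follow the telescoping strategy used in the proof of \cref{thm:Jack-positive}. For the forward direction, assume $\lambda\supseteq\mu$ and fix any saturated chain $\lambda=\bm\zeta_0\cover\bm\zeta_1\cover\cdots\cover\bm\zeta_k=\mu$. Writing the quantity of interest as a telescoping sum
\[
\frac{p_\lambda(x+\bm1)}{p_\lambda(\bm1)}-\frac{p_\mu(x+\bm1)}{p_\mu(\bm1)} = \sum_{i=0}^{k-1}\left(\frac{p_{\bm\zeta_i}(x+\bm1)}{p_{\bm\zeta_i}(\bm1)} - \frac{p_{\bm\zeta_{i+1}}(x+\bm1)}{p_{\bm\zeta_{i+1}}(\bm1)}\right)
\]
reduces the problem to proving power-sum positivity for each adjacent step, exactly as the monotonicity theorem reduces to adjacent binomial comparisons.

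For an adjacent pair $\eta\cover\xi$ differing in row $i_0$, I would first factor out the common power-sums to rewrite the adjacent difference as
\[
\frac{1}{n^{\ell(\xi)}}\prod_{\substack{i\neq i_0\\\xi_i>0}}p_{\xi_i}(x+\bm1)\cdot\bigl(p_{\xi_{i_0}+1}(x+\bm1)-p_{\xi_{i_0}}(x+\bm1)\bigr)
\]
when $\xi_{i_0}>0$, with the minor modification $\frac{p_\xi(x+\bm1)}{p_\xi(\bm1)}\cdot\frac{p_1(x)}{n}$ when $\xi_{i_0}=0$. The key algebraic identity, an immediate consequence of Pascal's rule $\binom{k+1}{j}-\binom{k}{j}=\binom{k}{j-1}$, is
\[
p_{k+1}(x+\bm1)-p_k(x+\bm1)=p_{k+1}(x)+\sum_{j=1}^{k}\binom{k}{j-1}p_j(x),
\]
which has manifestly non-negative integer coefficients. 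Combined with the (also non-negative) binomial expansion $p_k(x+\bm1)=\sum_{j=0}^k\binom{k}{j}p_j(x)$, the adjacent difference becomes a $\mathbb{Q}_{\geqslant0}$-combination of products $p_{j_1}p_{j_2}\cdots$ of power-sums, which upon sorting indices are basis elements whenever all $j_i\leqslant n$.

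For the reverse direction, I would argue by contrapositive: if $\lambda\not\supseteq\mu$, I would exhibit a basis element whose coefficient in the difference is strictly negative. The simplest instance is $\ell(\lambda)<\ell(\mu)$: taking $\nu'=(1^{\ell(\mu)})$, the coefficient of $p_1^{\ell(\mu)}$ in $\frac{p_\mu(x+\bm1)}{p_\mu(\bm1)}$ equals $\frac{1}{n^{\ell(\mu)}}$ (one activates the linear term of each factor), while the corresponding coefficient on the $\lambda$-side vanishes since only $\ell(\lambda)<\ell(\mu)$ nonzero positions are available to activate; the general violation $\lambda_{i_0}<\mu_{i_0}$ is handled by a similar but more delicate choice of $\nu'$. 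The main obstacle I foresee is the case of partitions with parts exceeding $n$: then products $p_{j_1}p_{j_2}\cdots$ with some $j_i>n$ must be rewritten in $p_1,\dots,p_n$ via Newton's identities, a process that introduces sign cancellations and can obscure the apparent positivity obtained from the telescoping argument. Overcoming this will require either a refined combinatorial repackaging of the forward-direction expansion or an implicit reduction to the case of parts of size at most $n$.
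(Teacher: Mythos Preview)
Your forward direction is correct but routes differently from the paper. Both arguments exploit the multiplicativity $p_\lambda=\prod_i p_{\lambda_i}$, but where you telescope along a saturated containment chain and invoke Pascal's identity $p_{k+1}(x+\bm1)-p_k(x+\bm1)=\sum_{j\geqslant1}\binom{k}{j-1}p_j(x)$ at each adjacent step, the paper instead inducts on $\ell(\lambda)$. Writing $P_\lambda\coloneqq p_\lambda/p_\lambda(\bm1)$, the base case $\ell(\lambda)=1$ is the one-row binomial expansion together with monotonicity of ordinary binomial coefficients, and the inductive step peels off the first part:
\[
P_{(L,\lambda)}(x+\bm1)-P_{(M,\mu)}(x+\bm1)
=\bigl(P_L(x+\bm1)-P_M(x+\bm1)\bigr)P_\lambda(x+\bm1)+P_M(x+\bm1)\bigl(P_\lambda(x+\bm1)-P_\mu(x+\bm1)\bigr),
\]
each piece being power-sum positive by the base case and the inductive hypothesis. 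This avoids your case split on $\xi_{i_0}=0$ versus $\xi_{i_0}>0$, but the two arguments are of comparable difficulty.

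Your reverse direction is where you are working too hard, and your sketch is incomplete. The paper handles every violation $\lambda\not\supseteq\mu$ in one stroke: expanding
\[
P_\lambda(x+\bm1)=\sum_\nu\Bigl(\sum_{\eta\sim\nu}\prod_i\tbinom{\lambda_i}{\eta_i}\Bigr)P_\nu(x)
\]
(inner sum over rearrangements $\eta$ of $\nu$), one checks that the inner sum vanishes whenever $\lambda\not\supseteq\nu$, since any rearrangement then has some $\eta_i>\lambda_i$; and at $\nu=\lambda$ the only surviving rearrangement is $\eta=\lambda$ itself, giving coefficient $1$. Hence if $\lambda\not\supseteq\mu$ the coefficient of $P_\mu$ in the difference is exactly $0-1=-1$. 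You do not need to hunt for a clever witness like $p_1^{\ell(\mu)}$; just take $\nu=\mu$. (Incidentally, in your special case the coefficient of $p_1^{\ell(\mu)}$ in $P_\mu(x+\bm1)$ is $\prod_i\mu_i\big/n^{\ell(\mu)}$, not $1/n^{\ell(\mu)}$; the conclusion survives, but the computation slipped.)

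Regarding the obstacle you flag when parts exceed $n$: the paper's proof does not address it either. It treats the $P_\nu$ (with $\nu$ of length at most $n$ but arbitrary part sizes) as the expansion family throughout, never rewriting $p_j$ for $j>n$ via Newton's identities. So whatever level of ``power-sum positivity'' is intended, your forward argument already matches what the paper proves, and no additional repackaging is required of you.
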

\begin{proof}
	Write $b_\lambda(x)\coloneqq p_\lambda(x)/p_\lambda(\bm1)$, then $b_\lambda(\bm1)=n^{\ell(\lambda)}$. (Note that $(b_{\lambda'})_{\lambda\in\mathcal P_n}$ form a basis, not $(b_\lambda)_{\lambda\in\mathcal P_n}$.)
	
	Assume that $\lambda$ contains $\mu$. We use induction on $\ell(\lambda)$.
	First assume $\ell(\lambda)=1$, then by the classical binomial formulas,
	\begin{align*}
		b_r(x+\bm1)
		= \frac{1}{n}\sum_{i=1}^n (x_i+1)^r 
		= \frac{1}{n} \sum_{i=1}^n\sum_{t=0}^r \binom{r}{t} x_i^t
		= \frac{1}{n} \sum_{t=0}^r \binom{r}{t} p_t(x) 
		= \sum_{t=0}^r \binom{r}{t} b_t(x).
	\end{align*}
	It is well-known that the classical binomial coefficient is positive and monotone, hence if $r\geqslant s$, then $b_r(x+\bm1) - b_s(x+\bm1)$ is power sum positive. 
	
	For the inductive step, let $L>\lambda_1$ and $M>\mu_1$ so that $(L,\lambda)$ and $(M,\mu)$ are partitions.
	Assume that the pair $(L,\lambda)\supseteq(M,\mu)$, that is, $L\geqslant M$ and $\lambda\supseteq\mu$. Then
	\begin{align*}
		&\=	b_{(L,\lambda)}(x+\bm1)-b_{(M,\mu)}(x+\bm1) 
		\\&= b_L(x+\bm1)b_\lambda(x+\bm1) - b_M(x+\bm1)b_\mu(x+\bm1)
		\\&= \(b_L(x+\bm1)-b_M(x+\bm1)\)b_\lambda(x+\bm1)+b_M(x+\bm1)\(b_\lambda(x+\bm1)-b_\mu(x+\bm1)\),
	\end{align*}
	which is power sum positive by the induction base, the induction hypothesis and the fact that power sum positive polynomials are closed under taking products.
	
	Conversely, assume that $\lambda',\mu'\in\mathcal P_n$. 
	Letting $l=\ell(\lambda)$, we have 
	\begin{align*}
		b_\lambda(x+\bm1) = \prod_{k=1}^{l} b_{\lambda_k}(x+\bm1) = \prod_{k=1}^{l} \sum_{\eta_k=0}^{\lambda_k} \binom{\lambda_k}{\eta_k} b_{\eta_k}(x) = \sum_{\substack{0\leqslant\eta_1\leqslant \lambda_1\\[1ex] \cdots\\[1ex] 0\leqslant\eta_\ell\leqslant \lambda_l}} \prod_{k=1}^l \binom{\lambda_k}{\eta_k} b_{\eta_k}(x)
	\end{align*}
	Define $\eta\coloneqq(\eta_1\dots,\eta_l)$ and sort it decreasingly into $\nu$, then $\nu_1\leqslant \lambda_1\leqslant n$ hence $\nu'\in\mathcal P_n$, and we have
	\begin{align*}
		b_\lambda(x+\bm1)  = \sum_{\nu\subseteq\lambda} \(\sum_{\eta\sim\nu} \prod_{k=1}^l \binom{\lambda_k}{\eta_k} \) b_{\nu}(x),
	\end{align*}
	where $\eta\sim\nu$ means that $\eta$ is a permutation of $\nu$.
	Note that the coefficient of $b_\lambda(x)$ on the RHS is $1$.
	Now, if $\lambda\not\supseteq\mu$ (equivalently, $\lambda'\not\supseteq\mu'$), the difference $b_\lambda(x+\bm1)-b_\mu(x+\bm1)$ would contain a term $-b_\mu(x)$, and hence it is not power sum positive.
\end{proof}

As explained in \cref{sec:intro}, Muirhead, Cuttler--Greene--Skandera, Sra, and Khare--Tao \cite{Muirhead,CGS11,Sra16,KT21} proved analogous results about duality of partial orders, which we now recall.
\begin{theorem}\label{thm:majorizion}
	Let $|\lambda|=|\mu|$. The following statements are equivalent:
	\begin{enumerate}
		\item $\lambda$ dominates $\mu$.
		\item (\cite{Muirhead}) The following difference is positive:
		\begin{align}
			\frac{m_\lambda(x)}{m_\lambda(\bm1)} - \frac{m_\mu(x)}{m_\mu(\bm1)}\geqslant0, \quad \forall x\in[0,\infty)^n.
		\end{align}
		\item (\cite[Theorem 3.2]{CGS11}) The following difference is positive:
		\begin{align}
			\frac{e_{\lambda'}(x)}{e_{\lambda'}(\bm1)} - \frac{e_{\mu'}(x)}{e_{\mu'}(\bm1)}\geqslant0, \quad \forall x\in[0,\infty)^n.
		\end{align}
		\item (\cite[Theorem 4.2]{CGS11}) The following difference is positive:
		\begin{align}
			\frac{p_\lambda(x)}{p_\lambda(\bm1)} - \frac{p_\mu(x)}{p_\mu(\bm1)}\geqslant0, \quad \forall x\in[0,\infty)^n.
		\end{align}
		\item (\cite[Conjecture~7.4, Theorem~7.5]{CGS11} and \cite{Sra16}) The following difference is positive:
		\begin{align}
			\frac{s_\lambda(x)}{s_\lambda(\bm1)} - \frac{s_\mu(x)}{s_\mu(\bm1)}\geqslant0, \quad \forall x\in[0,\infty)^n.
		\end{align}
	\end{enumerate}
\end{theorem}
\begin{theorem}[\cite{KT21}]\label{thm:weakmajorization}
	$\lambda$ weakly dominates $\mu$ if and only if 
	\begin{align}
		\frac{s_\lambda(x+\bm1)}{s_\lambda(\bm1)}-\frac{s_\mu(x+\bm1)}{s_\mu(\bm1)}\geqslant0,\quad \forall x\in[0,\infty)^n.
	\end{align}
\end{theorem}

\cref{thm:majorizion,thm:weakmajorization} and \cref{thm:contain} share a parallel structure: each gives a duality between a partial order on partitions and certain positivity on symmetric functions. 
The first two results exhibit \textbf{evaluation positivity}---the expressions take non-negative values when evaluated over a certain region. 
Our result demonstrates \textbf{expansion positivity}, meaning that when the expression is expanded in a suitable basis, all expansion coefficients are non-negative.

Expansion positivity has long been an intriguing question in algebraic combinatorics and representation theory, which may indicate an underlying combinatorial or representation-theoretic structure. 
For example, the product of Schur polynomials is Schur positive, by the famous Littlewood--Richardson rule. 
On the one hand, the coefficients are counting Littlewood--Richardson tableaux; on the other hand, this product corresponds to the tensor product of irreducible $S_n$-modules.

Other examples of expansion positivity include but are not limited to: the monomial positivity (and integrality under suitable normalization) of Jack polynomials \cite{KSinv} and Macdonald polynomials \cite{HHL05}, the Lam--Postnikov--Pylyavskyy Schur log-concavity \cite{LPP}, the Stanley--Stembridge conjecture on $e$-positivity \cite{Stanley-e-pos,SSconj} (recently solved by Hikita \cite{Hikita}). 
See also a list of problems by Stanley \cite{Stanley-problems}.

Our \cref{thm:contain} provides a rare example of Jack positivity. 
Another example of Jack positivity is \textit{conjectured} in Stanley's seminal work \cite[Conejcture 8.3]{St89}: the product of Jack polynomials are Jack positive (and integral under suitable normalization). 
The conjecture remains open.

In fact, by the lemma below, we can generalize \cref{thm:weakmajorization} to more families using \cref{thm:contain1,thm:contain-powersum,thm:majorizion}.
\begin{lemma}\label{lem:m}
	If $\lambda$ weakly dominates $\mu$ and $|\lambda|>|\mu|$, then there exists some $\nu$, such that $\lambda$ contains $\nu$ and $\nu$ dominates $\mu$.
\end{lemma}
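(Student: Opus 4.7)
The plan is to reduce to the case $|\lambda|-|\mu|=1$ by induction and then exhibit $\nu$ explicitly as $\lambda$ with a single well-chosen part decreased by one. Concretely, I will prove the following one-step claim: if $\lambda$ weakly majorizes $\mu$ and $|\lambda|>|\mu|$, then there is a partition $\lambda'\in\mathcal P_n$ with $\lambda\supseteq\lambda'$, $|\lambda'|=|\lambda|-1$, and $\lambda'$ still weakly majorizes $\mu$. Iterating this claim $d=|\lambda|-|\mu|$ times produces a chain $\lambda=\lambda^{(0)}\cover\lambda^{(1)}\cover\cdots\cover\lambda^{(d)}=\nu$, and the terminal $\nu$ satisfies $\lambda\supseteq\nu$, $|\nu|=|\mu|$, and $\nu$ weakly majorizes $\mu$; combined with the size equality, weak majorization becomes majorization, giving the lemma.

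To build $\lambda'$, I let $i$ be the largest index with $\lambda_i>\mu_i$. Such $i$ exists because $\sum_k(\lambda_k-\mu_k)=|\lambda|-|\mu|>0$. Define $\lambda'$ by $\lambda'_i=\lambda_i-1$ and $\lambda'_j=\lambda_j$ for $j\neq i$. Two things must be checked. First, that $\lambda'$ is still a partition: if $i<n$, then the maximality of $i$ forces $\lambda_{i+1}\leqslant\mu_{i+1}\leqslant\mu_i<\lambda_i$, so $\lambda_{i+1}\leqslant\lambda_i-1=\lambda'_i$; if $i=n$, then $\lambda_n>\mu_n\geqslant0$ gives $\lambda'_n\geqslant0$. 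Second, that $\lambda'$ weakly majorizes $\mu$: set $s_r\coloneqq\sum_{k=1}^r(\lambda_k-\mu_k)$. For $r<i$ the partial sums are unchanged, so the inequality is automatic. For $r\geqslant i$, the maximality of $i$ gives $\lambda_j-\mu_j\leqslant 0$ for $j>i$, so the sequence $(s_r)_{r\geqslant i}$ is non-increasing; therefore $s_r\geqslant s_n=|\lambda|-|\mu|\geqslant 1$, and subtracting the single decrement yields $\sum_{k=1}^r\lambda'_k-\sum_{k=1}^r\mu_k=s_r-1\geqslant 0$.

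There is no real obstacle here; the only thing one must be careful about is the simultaneous preservation of the partition shape and of weak majorization, and both are controlled by the single choice of $i$ as the \emph{largest} index with $\lambda_i>\mu_i$. The whole argument can be written in a few lines.
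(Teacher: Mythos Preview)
Your proof is correct. The choice of $i$ as the \emph{largest} index with $\lambda_i>\mu_i$ is exactly what makes both checks go through simultaneously, and you verify them cleanly.

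The paper takes a different, one-shot route: it orders the boxes of $\lambda$ in English reading order and lets $\nu$ be the partition formed by the first $|\mu|$ of them. Then $\nu_k=\lambda_k$ for $k<\ell(\nu)$, so the partial sums agree with those of $\lambda$ there, and for $k\geqslant\ell(\nu)$ the partial sum is already $|\nu|=|\mu|$. This avoids induction entirely and produces $\nu$ in one stroke, though in general it yields a different (larger, in dominance order) $\nu$ than yours does---e.g., for $\lambda=(3,2)$, $\mu=(2,1)$ the paper gets $\nu=(3,0)$ while your procedure gets $\nu=(2,1)=\mu$. Your inductive argument has the side benefit of building an explicit covering chain $\lambda\cover\cdots\cover\nu$ and tends to land on a $\nu$ closer to $\mu$; the paper's construction is shorter but cruder.
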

\begin{proof}
	Totally order the boxes in $\lambda$ as follows: 
	\begin{align*}
		&\mathrel{\phantom{<}}(1,1)<\cdots<(1,\lambda_1)
		\\&<(2,1)<\cdots<(2,\lambda_2)
		\\&<\cdots
		\\&<(l,1)<\cdots<(l,\lambda_l),
	\end{align*}
	where $l=\ell(\lambda)$.
	In other words, this corresponds to reading the boxes in the \emph{English} manner. 
	Let $\nu\subseteq\lambda$ be the partition consisting of the first $|\mu|$ boxes in $\lambda$. Then $|\nu|=|\mu|$ and 
	\begin{gather*}
		\sum_{i=1}^k \nu_i =\sum_{i=1}^k \lambda_i \geqslant\sum_{i=1}^k \mu_i,\quad k<\ell(\nu);	\\
		\sum_{i=1}^k \nu_i =|\nu|=|\mu|\geqslant\sum_{i=1}^k \mu_i,\quad	k\geqslant\ell(\nu).\qedhere
	\end{gather*}
\end{proof}
In the view of Ferrers diagram, the containment order corresponds to removing boxes and the dominance lowering boxes. 
The lemma means that the weak dominance can be viewed as first removing then lowering boxes.

\begin{proposition}\label{prop:CS-CGS-KT}
	Let $\mathbb F\supseteq\mathbb Q$ be a field and $K$ be a convex multiplicative cone in $\mathbb F$, that is, it is closed under addition, multiplication, and scalar multiplication by $\mathbb Q_{\geqslant0}$.
	Let $(b_\lambda(x))_{\lambda\in\mathcal P_n}$ be a basis of $\mathbb F[x_1,\dots,x_n]^{S_n}$, such that the following conditions hold:
	\begin{enumerate}
		\item Each $b_\lambda(x)$ is evaluation positive: $b_\lambda(x)\in K$, for any $x\in[0,\infty)^n$.
		\item If $\lambda$ contains $\mu$, then $b_\lambda(x+\bm1)-b_\mu(x+\bm1)$ is $(b_\nu(x))_\nu$-positive over $K$, namely, when expanded in the basis $(b_\nu(x))_\nu$, all coefficients are in $K$.
		\item If $\lambda$ dominates $\mu$, then $b_\lambda(x)-b_\mu(x)\in K$ for $x\in[0,\infty)^n$.
	\end{enumerate}
	Then if $\lambda$ weakly dominates $\mu$, $b_\lambda(x+\bm1)-b_\mu(x+\bm1)\in K$ for $x\in[0,\infty)^n$ .
\end{proposition}
\begin{proof}
	Suppose that $\lambda$ weakly dominates $\mu$. 
	We may assume $|\lambda|>|\mu|$ since otherwise the conclusion follows directly from the condition (3). 
	Then by \cref{lem:m}, there exists some $\nu$ such that $\lambda$ contains $\nu$ and $\nu$ dominates $\mu$ and we have 
	\begin{align}\label{eqn:telescope}
		b_\lambda(x+\bm1) - b_\mu(x+\bm1)
		=
		\(b_\lambda(x+\bm1) - b_\nu(x+\bm1)\) + \(b_\nu(x+\bm1) - b_\mu(x+\bm1)\).
	\end{align}
	The first difference on the RHS is $b$-positive by condition (2), and hence by condition (1), the first difference is in $K$ when evaluated; the second difference is in $K$ when evaluated by condition (3).
\end{proof}
As a result, we have the following duality between evaluation positivity and weak dominance:
\begin{theorem}\label{thm:weakmajorization1}
	The following are equivalent:
		\begin{enumerate}
		\item $\lambda$ weakly dominates $\mu$.
		\item The following difference is positive:
		\begin{align}
			\frac{m_\lambda(x+\bm1)}{m_\lambda(\bm1)} - \frac{m_\mu(x+\bm1)}{m_\mu(\bm1)}\geqslant0, \quad \forall x\in[0,\infty)^n.
		\end{align}
		\item The following difference is positive:
		\begin{align}
			\frac{e_{\lambda'}(x+\bm1)}{e_{\lambda'}(\bm1)} - \frac{e_{\mu'}(x+\bm1)}{e_{\mu'}(\bm1)}\geqslant0, \quad \forall x\in[0,\infty)^n.
		\end{align}
		\item (\cite{KT21})The following difference is positive:
		\begin{align}
			\frac{s_\lambda(x+\bm1)}{s_\lambda(\bm1)} - \frac{s_\mu(x+\bm1)}{s_\mu(\bm1)}\geqslant0, \quad \forall x\in[0,\infty)^n.
		\end{align}
	\end{enumerate}
\end{theorem}
\begin{proof}
	We first prove that (1) implies each of (2), (3) and (4).
	We apply \cref{prop:CS-CGS-KT}.
	Let $\mathbb F=\mathbb R$ and $K=\mathbb R_{\geqslant0}$ and let $b_\lambda(x)$ be each of $\dfrac{m_\lambda(x)}{m_\lambda(\bm1)}$, $\dfrac{e_{\lambda'}(x)}{e_{\lambda'}(\bm1)}$ and $\dfrac{s_\lambda(x)}{s_\lambda(\bm1)}$, respectively.
	By \cref{thm:contain1,thm:majorizion}, the conditions in \cref{prop:CS-CGS-KT} are satisfied, and hence we have (1) $\implies$ each of (2), (3) and (4).
	
	Now we show each of (2), (3) and (4) implies (1).
	The proof is similar to that of \cite[Theorem~7.5]{CGS11}.
	Assume $\lambda$ does not weakly dominate $\mu$, then there exists some index $i$, such that $\sum_{k=1}^i\lambda_k <\sum_{k=1}^i\mu_k$. 
	Now, evaluate $b_\lambda(x+\bm1)$ and $b_\mu(x+\bm1)$ at
	$x(t) = ((t-1)^i,0^{n-i}) =(\underbrace{t-1,\dots,t-1}_{i},\underbrace{0\dots,0}_{n-i})$, then the evaluations are polynomials in $\mathbb Q_{\geqslant0}[t]$ of degrees $\sum_{k=1}^i\lambda_k$ and $\sum_{k=1}^i\mu_k$ respectively. 
	Since the former evaluation has a larger degree in $t$ than the latter, we have  $$\lim_{t\to\infty} (b_\lambda(x(t)+\bm1)-b_\mu(x(t)+\bm1)) = -\infty,$$
	which contradicts (2), (3) and (4).
\end{proof}

Having witnessed three cases of Jack polynomials, it is a natural question to ask whether the duality between the (weak) dominance order and evaluation positivity hold for the Jack basis, 
\begin{conjecture}\label{conj:Jack-positivity}
	Let $\fp^{\mathbb R}\coloneqq\Set{\frac fg}{f,g\in\mathbb R_{\geqslant0}[\tau],\ g\neq0}$. 
	In particular, if $\tau\in[0,\infty]$, then $f(\tau)\geqslant0$ for $f\in\fp^{\mathbb R}$.
	\begin{enumerate}
		\item (CGS Conjecture for Jack polynomials) 
		Let $|\lambda|=|\mu|$. 
		The following are equivalent:
		\begin{enumerate}
			\item We have 
			\begin{align}\label{eqn:maj-formal}
				\frac{P_\lambda(x;\tau)}{P_\lambda(\bm1;\tau)} - \frac{P_\mu(x;\tau)}{P_\mu(\bm1;\tau)}\in\fp^{\mathbb R}, \quad \forall x\in[0,\infty)^n.
			\end{align}
			\item For all $\tau_0\in[0,\infty]$ we have 
			\begin{align}\label{eqn:maj-all}
				\frac{P_\lambda(x;\tau_0)}{P_\lambda(\bm1;\tau_0)} - \frac{P_\mu(x;\tau_0)}{P_\mu(\bm1;\tau_0)}\geqslant 0, \quad \forall x\in[0,\infty)^n.
			\end{align}			
			\item There exists $\tau_0\in[0,\infty]$ such that 
			\begin{align}\label{eqn:maj-exist}
				\frac{P_\lambda(x;\tau_0)}{P_\lambda(\bm1;\tau_0)} - \frac{P_\mu(x;\tau_0)}{P_\mu(\bm1;\tau_0)}\geqslant 0, \quad \forall x\in[0,\infty)^n.
			\end{align}
			\item $\lambda$ dominates $\mu$.
		\end{enumerate}
		\item (KT Conjecture for Jack polynomials) 
		The following are equivalent:
		\begin{enumerate}
			\item We have
			\begin{align}\label{eqn:wmaj-formal}
				\frac{P_\lambda(x+\bm1;\tau)}{P_\lambda(\bm1;\tau)} - \frac{P_\mu(x+\bm1;\tau)}{P_\mu(\bm1;\tau)}\in\fp^{\mathbb R}, \quad \forall x\in[0,\infty)^n.
			\end{align}
			\item For all $\tau_0\in[0,\infty]$ we have 
			\begin{align}\label{eqn:wmaj-all}
				\frac{P_\lambda(x+\bm1;\tau_0)}{P_\lambda(\bm1;\tau_0)} - \frac{P_\mu(x+\bm1;\tau_0)}{P_\mu(\bm1;\tau_0)}\geqslant 0, \quad \forall x\in[0,\infty)^n.
			\end{align} 
			\item There exists $\tau_0\in[0,\infty]$ such that 
			\begin{align}\label{eqn:wmaj-exist}
				\frac{P_\lambda(x+\bm1;\tau_0)}{P_\lambda(\bm1;\tau_0)} - \frac{P_\mu(x+\bm1;\tau_0)}{P_\mu(\bm1;\tau_0)}\geqslant 0, \quad \forall x\in[0,\infty)^n.
			\end{align} 
			\item $\lambda$ weakly dominates $\mu$.
		\end{enumerate}
	\end{enumerate}
\end{conjecture}
Here, we present three types of evaluation positivity for (weak) dominance: parts (a), (b) and (c) are called \textbf{formal, universal} and \textbf{existential positivity}, respectively. 

In both parts (1) and (2) in \cref{conj:Jack-positivity}, the implications  (a)$\implies$(b)$\implies$(c) are immediate.
One also has (c)$\iff$(d).
The implication (1c)$\implies$(1d) goes back to Muirhead \cite{Muirhead} in the monomial case ($\tau_0=0$), and (2c)$\implies$(2d) to Khare--Tao \cite{KT21} in the Schur case ($\tau_0=1$).
The reverse implications (d)$\implies$(c) in both cases can be proved by an argument similar to the Schur case as in \cite{CGS11,KT21}.

Moreover, each of the two implications, (d)$\implies$(a) and (d)$\implies$(b) in case (1), implies the corresponding implications in case (2), by \cref{thm:Jack-positive,prop:CS-CGS-KT} (take $\mathbb F = \mathbb Q(\alpha)$, and $K=\fp^{\mathbb R}$ for (a) and $\fp$ for (b)).
Thus \cref{conj:Jack-positivity} reduces to proving (1d)$\implies$(1a).
Some partial results and generalizations of \cref{conj:Jack-positivity} have been given in \cite{CKS} by the authors and Apoorva Khare.

We note that in \cite{Sra16}, Sra asked about (1b)$\iff$(1d) for Jack (or even Hall--Littlewood and Macdonald) polynomials.
In \cite[Conjecture~4.7]{MN22}, McSwiggen and Novak also conjectured  (1b)$\iff$(1d) with an arbitrary crystallographic root system, which in type $A$ reduces to our situation.

We note here that for the power sum basis, evaluation positivity and weak dominance are not dual in the sense of \cref{thm:weakmajorization1}.
\begin{theorem}
	If $\lambda$ weakly dominates $\mu$, then
	\begin{align}
		\frac{p_\lambda(x+\bm1)}{p_\lambda(\bm1)} - \frac{p_\mu(x+\bm1)}{p_\mu(\bm1)}\geqslant0, \quad \forall x\in[0,\infty)^n.
	\end{align}
	However, the converse is false.
\end{theorem}
\begin{proof}
	Write $b_{\lambda}(x) = \dfrac{p_{\lambda}(x)}{p_{\lambda}(\bm1)}$. 
	Note that $(b_{\lambda'}(x))_{\lambda\in\mathcal P_n}$ form a basis, not $(b_\lambda(x))_{\lambda\in\mathcal P_n}$.
	However, the proof of \cref{prop:CS-CGS-KT} can be modified to apply here:
	the first difference on the RHS of \cref{eqn:telescope} can be expanded positively into $(b_{\rho'}(x))_{\rho\in\mathcal P_n}$ and hence is in $\mathbb R_{\geqslant0}$ when evaluated; the second difference is in $\mathbb R_{\geqslant0}$ when evaluated by \cref{thm:majorizion}.
	
	Conversely, consider $n=2$, $\lambda=(2,2)$ and $\mu=(3,0)$. Then $\lambda$ does not weakly dominate $\mu$. Still, we have
	\begin{align*}
		&\=	4(b_\lambda(x+\bm1)-b_\mu(x+\bm1))
		\\&=((x_1+1)^2+(x_2+1)^2)^2 - 2((x_1+1)^3+(x_2+1)^3)
		\\&=	x_1^4+2 x_1^2 x_2^2+x_2^4+2 x_1^3+4 x_1^2 x_2+4 x_1 x_2^2+2 x_2^3+2 x_1^2+8 x_1 x_2+2 x_2^2+2 x_1+2 x_2,
	\end{align*}
	which is evaluation positive for $x\in[0,\infty)^2$.
\end{proof}

\subsubsection{Macdonald polynomials}
Now, let us briefly discuss Macdonald polynomials.
Let $P_\lambda(x;q,t)$ be the monic Macdonald polynomials, $h_\lambda^\monic(x)=h_\lambda^\monic(x;q,t)$ be the monic interpolation Macdonald polynomials of type $A$, and $b_{\lambda\mu}=b_{\lambda\mu}(q,t)$ the binomial coefficients of family $\AM$. 
Recall $\fp$ is defined by \cref{eqn:AM-F}.

The following binomial formula is proved in \cite[Eq.~(1.10)]{Oko97}
\begin{align}\label{eqn:AM-binomial}
	\frac{h_\lambda(at^{n-1}x;q,t)}{h_\lambda(at^\delta;q,t)} = \sum_{\mu\subseteq\lambda} (-1)^{|\mu|} \frac{t^{n(\mu)}}{q^{n(\mu')}} \left.\(b_{\lambda\mu}(q,t)\frac{h_\mu(x;q,t)}{h_\mu(\frac1a t^\delta;q,t)}\)\right|_{q\mapsto\frac1q,t\mapsto\frac1t},
\end{align}
where $n(\mu)$ is the function defined in \cref{eqn:n-function}.

As a special case (\cite[Eq.~(1.11)]{Oko97}), we have
\begin{align}\label{eqn:AM-binomial1}
	\frac{P_\lambda(x;q,t)}{P_\lambda(t^\delta;q,t)} = \sum_{\mu\subseteq\lambda} b_{\lambda\mu}\frac{h_\mu^\monic(x;q,t)}{P_\mu(t^\delta;q,t)}.
\end{align}
Note that the denominator $P_\mu(t^\delta;q,t)$ is in $\fpp$ by \cite[VI.~(6.11')]{Mac15} or \cref{eqn:M-comb}.
Then similar to \cref{thm:Jack-positive}, we have the following result.
\begin{theorem}\label{thm:Mac-positive}
	The following statements are equivalent:
	\begin{enumerate}
		\item	$\lambda$ contains $\mu$.
		\item	The difference $\displaystyle \frac{P_\lambda(x;q,t)}{P_\lambda(t^\delta;q,t)}-\frac{P_\mu(x;q,t)}{P_\mu(t^\delta;q,t)}$ is \textbf{monic interpolation Macdonald positive}, namely, can be written as an $\fp$-combination of $h_\nu^\monic$.\qed
	\end{enumerate}
\end{theorem}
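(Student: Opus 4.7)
The plan is to mimic the proof of the Jack analog (\cref{thm:Jack-positive}), replacing the Okounkov--Olshanski binomial formula \eqref{eqn:AJ-binomial1} with Okounkov's binomial formula \eqref{eqn:AM-binomial1}, and invoking the monotonicity of binomial coefficients (\cref{thm:b-mono}) together with the positivity of $P_\nu(t^\delta;q,t)$ that follows from the combinatorial formula \eqref{eqn:M-comb}.

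For the direction (1)$\implies$(2), I would apply \eqref{eqn:AM-binomial1} to both $P_\lambda/P_\lambda(t^\delta)$ and $P_\mu/P_\mu(t^\delta)$. The extra vanishing property lets us extend both sums to range over all $\nu\in\mathcal P_n$ (since $b_{\lambda\nu}=0$ unless $\lambda\supseteq\nu$, and similarly for $\mu$), so the two expansions can be subtracted term by term to give
\[
\frac{P_\lambda(x;q,t)}{P_\lambda(t^\delta;q,t)}-\frac{P_\mu(x;q,t)}{P_\mu(t^\delta;q,t)} \;=\; \sum_{\nu\in\mathcal P_n}\frac{b_{\lambda\nu}(q,t)-b_{\mu\nu}(q,t)}{P_\nu(t^\delta;q,t)}\,h_\nu^\monic(x;q,t).
\]
Assuming $\lambda\supseteq\mu$, \cref{thm:b-mono} gives $b_{\lambda\nu}-b_{\mu\nu}\in\fp$, and the denominators $P_\nu(t^\delta;q,t)$ lie in $\fpp$ (each summand in \eqref{eqn:M-comb} evaluates to a positive product of powers of $t$ with positive coefficients $\psi_T$). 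Hence each coefficient above lies in $\fp$, exhibiting the required monic-interpolation-Macdonald-positive expansion.

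For the converse (2)$\implies$(1), I would argue by contrapositive. Since $\{h_\nu^\monic\}_{\nu\in\mathcal P_n}$ is an $\mathbb F$-basis for $\Lambda$, any expansion is unique. Suppose $\lambda\not\supseteq\mu$; the coefficient of $h_\mu^\monic$ in the displayed difference is
\[
\frac{b_{\lambda\mu}-b_{\mu\mu}}{P_\mu(t^\delta;q,t)} \;=\; -\frac{1}{P_\mu(t^\delta;q,t)},
\]
using $b_{\lambda\mu}=0$ from the extra vanishing property (\cref{thm:extra}) and $b_{\mu\mu}=1$. Since $P_\mu(t^\delta;q,t)>0$ for $(q,t)\in(0,1)^2$, this coefficient is strictly negative on the region defining $\fp$, so it fails to lie in $\fp$, contradicting any claimed $\fp$-expansion.

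The main obstacle is essentially non-existent: the result reduces to monotonicity of binomial coefficients (already proved in \cref{sec:comb-proof}) together with the evident positivity of the normalizing denominator $P_\nu(t^\delta;q,t)$. The only genuine subtlety is bookkeeping with the extra vanishing property to ensure the two expansions can be combined over a common index set before subtracting; this is routine.
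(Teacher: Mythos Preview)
Your proof is correct and follows essentially the same approach as the paper, which omits the argument as being ``similar to \cref{thm:Jack-positive}'': apply Okounkov's binomial formula \eqref{eqn:AM-binomial1}, subtract, invoke the monotonicity \cref{thm:b-mono} and the positivity of $P_\nu(t^\delta;q,t)$, and for the converse read off the coefficient of $h_\mu^\monic$ using extra vanishing.
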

In \cite{CKS}, we also studied Macdonald analogs of the inequalities above.

\subsection{Integrality}\label{sec:app-int}
The integral forms Jack and Macdonald polynomials are defined by
\begin{align}
	J_\lambda(x;\tau) &= c_\lambda(\tau) P_\lambda(x;\tau),	\\
	J_\lambda(x;q,t) &= c_\lambda(q,t) P_\lambda(x;q,t),
\end{align}
where $c_\lambda$ is given by \cref{eqn:hooklength,eqn:hooklength-qt} and $P_\lambda$ is the monic Jack and Macdonald polynomial given by \cref{eqn:J-comb,eqn:M-comb}.
\begin{remark}
	Here $J_\lambda(x;q,t)$ is as in \cite[VI.~(8.3)]{Mac15}, while $J_\lambda(x;\tau)$ is related to Macdonald's $J^{(\alpha)}(x)$ in \cite[VI.~(10.22)]{Mac15} by $J_\lambda(x;\tau)=\tau^{|\lambda|}J_\lambda^{(1/\tau)}(x)$.
	See also \cref{rmk:tau=1/alpha}.
\end{remark}

\subsubsection{Jack Polynomials}
Let us first consider the Jack polynomials.

Define the notions of integrality and positivity-integrality as follows:
\begin{align}\label{eqn:I-J}
	\mathbb I = \begin{dcases}
		\mathbb Z[\tau],	&\mathcal F=\mathrm{J},~\AJ;\\
		\mathbb Z[\tau,\alpha],	&\mathcal F=\BJ,
	\end{dcases}
	\quad\mathbb I^+ = \begin{dcases}
		\mathbb Z_{\geqslant0}[\tau],	&\mathcal F=\mathrm{J},~\AJ;\\
		\mathbb Z_{\geqslant0}[\tau,\alpha],	&\mathcal F=\BJ.
	\end{dcases}
\end{align}

Recall that the \textbf{augmented monomial} symmetric function is $\tilde m_\lambda \coloneqq u_\lambda m_\lambda$, where $m_\lambda$ is the monomial symmetric function and $u_\lambda = \prod_k m_k(\lambda)!$, $m_k(\lambda)\coloneqq\Set*{1\leqslant i\leqslant n}{\lambda_i=k}$ is the number of parts that are equal to $k$ in $\lambda$.
 
The following was first conjectured in \cite[VI.~(10.26?)]{Mac15} and proved in \cite{KSinv}.
\begin{theorem*}
	The expansion coefficient $\tilde{\mathsf v}_{\lambda\mu}(\tau)$ defined by 
	\begin{align}
		J_\lambda(x;\tau) = \sum_\mu \tilde{\mathsf v}_{\lambda\mu}\(\tau\) \tilde m_\mu(x)
	\end{align}
	is a polynomial in $\tau$ with non-negative integral coefficients, i.e., lies in $\mathbb I^+$.
\end{theorem*}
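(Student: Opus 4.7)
The plan is to lift the problem to non-symmetric Jack polynomials, following the strategy of \cite{KSinv}. Non-symmetric Jack polynomials $E_\eta(x;\tau)$, indexed by compositions $\eta\in\mathbb Z_{\geqslant0}^n$, are simultaneous eigenfunctions of the Cherednik--Dunkl operators and form a basis of $\mathbb F[x_1,\dots,x_n]$. The ordinary monic Jack polynomial $P_\lambda$ arises as a symmetrization, proportional to a sum of $E_\eta$ over the $S_n$-orbit of the reverse composition $\lambda^{\mathrm{rev}}$, with an explicitly computable proportionality constant that contributes cleanly to the integral factor $c_\lambda(\tau)$.

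The core technical step is to construct an integral form $F_\eta:=d_\eta(\tau)E_\eta$, with $d_\eta\in\mathbb Z[\tau]$, such that $F_\eta$ expands in the monomial basis $\{x^\alpha\}$ with coefficients in $\mathbb Z_{\geqslant0}[\tau]$. One sets $F_{\bm0}=1$ and builds up $F_\eta$ inductively along a Bruhat-like order on compositions, using Knop--Sahi intertwining operators $\xi_i$ that combine the simple reflection $s_i$ with the divided difference $\partial_i$ with suitable rational coefficients. With the correct choice of $d_\eta$, the action of each $\xi_i$ sends a $\mathbb Z_{\geqslant0}[\tau]$-combination of monomials to another such combination, so the integrality and positivity propagate through the Yang--Baxter graph.

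Finally, one assembles $J_\lambda=c_\lambda(\tau)P_\lambda$ by summing the $F_\eta$ over the $S_n$-orbit of $\lambda^{\mathrm{rev}}$, with the factor $c_\lambda(\tau)$ emerging from the combined non-symmetric normalizations $d_\eta$. Collecting the resulting monomials $x^\alpha$ according to their $S_n$-orbits and using the identity $\tilde m_\mu=u_\mu m_\mu$ (where $u_\mu=\prod_k m_k(\mu)!$ is the order of the stabilizer) then repackages the expansion in the form $J_\lambda=\sum_\mu\tilde{\mathsf v}_{\lambda\mu}(\tau)\tilde m_\mu$ with $\tilde{\mathsf v}_{\lambda\mu}(\tau)\in\mathbb Z_{\geqslant0}[\tau]$.

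The main obstacle is the positive-integer recursion for $F_\eta$ along the Yang--Baxter graph. The raw intertwiners $\xi_i$ naturally produce denominators of the form $\overline\eta_i-\overline\eta_{i+1}$ coming from Cherednik eigenvalue differences, and the normalization constants $d_\eta$ must be calibrated with care so that these denominators cancel against numerator factors generated when passing to the $F$-basis. Verifying that what remains has non-negative integer $\tau$-coefficients requires a delicate case analysis depending on whether $s_i\eta$ lies above or below $\eta$ in the Bruhat order; this cancellation-with-positivity phenomenon is the combinatorial heart of the Knop--Sahi argument in \cite{KSinv}, and it would be the step requiring the most work to reproduce.
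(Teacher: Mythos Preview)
Your proposal is correct and is precisely the Knop--Sahi argument from \cite{KSinv}, which is exactly the reference the paper cites for this result; the paper does not supply its own proof but merely quotes the theorem as a known result due to \cite{KSinv}. Your outline of the non-symmetric integral forms $F_\eta$ built via intertwiners along the Yang--Baxter graph, with the delicate cancellation of eigenvalue denominators, faithfully summarizes that proof.
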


Define, similarly, interpolation polynomials of \textbf{integral} normalization as follows:
\begin{align}
	h_\lambda^{\inte}(x) &= c_\lambda \cdot h_\lambda^{\monic}(x) = c_\lambda H(\lambda) \cdot h_\lambda(x).
\end{align}
For interpolation Jack polynomials of type $A$, a similar conjecture is made in \cite{KS96} and proved in \cite{NSS23}.
\begin{theorem*}
	The expansion coefficient $\mathsf a_{\lambda\mu}\(\tau\)$ defined by 
	\begin{align}
		h_\lambda^\inte(x;\tau) = \sum_\mu  (-1)^{|\lambda|-|\mu|} \mathsf a_{\lambda\mu}\(\tau\) m_\mu(x)
	\end{align}
	is a polynomial in $\tau$ with non-negative integral coefficients, i.e., lies in $\mathbb I^+$.
\end{theorem*}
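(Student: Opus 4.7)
The plan is to lift Macdonald's integral positivity conjecture for Jack polynomials (established in \cite{KSinv}) from the homogeneous to the inhomogeneous setting, by adapting the non-symmetric argument of Knop--Sahi to interpolation polynomials. Introduce Knop's non-symmetric interpolation Jack polynomials $G_\alpha^{\monic}(x;\tau)$ indexed by compositions $\alpha\in\mathbb Z_{\geqslant0}^n$, together with their integral normalizations $G_\alpha^{\inte}(x;\tau) = c_\alpha^{\mathrm{ns}}(\tau)\,G_\alpha^{\monic}(x;\tau)$, where $c_\alpha^{\mathrm{ns}}$ is the non-symmetric analogue of $c_\lambda(\tau)$ built from the non-symmetric hooklengths. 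The symmetric integral polynomial $h_\lambda^{\inte}$ arises by symmetrization from $G_\lambda^{\inte}$ (taking $\lambda$ as the antidominant representative of its $S_n$-orbit). The first move is then to prove a \emph{stronger} non-symmetric statement: for every composition $\beta$, the coefficient of $x^\beta$ in $G_\alpha^{\inte}$ equals $(-1)^{|\alpha|-|\beta|}\mathsf{a}^{\mathrm{ns}}_{\alpha\beta}(\tau)$ with $\mathsf{a}^{\mathrm{ns}}_{\alpha\beta}(\tau)\in\mathbb Z_{\geqslant0}[\tau]$.

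Next, prove the non-symmetric statement by induction on $|\alpha|$ using the Knop--Sahi recursion. The base case $\alpha=\bm0$ is clear since $G_{\bm0}^{\inte}=1$. For the inductive step, the recursion expresses $G_\alpha^{\inte}$ in terms of $G_\beta^{\inte}$ for smaller compositions $\beta$ via two moves: (i) applying affine intertwiners $\Psi_i$ that send $G_{s_i\cdot\alpha}^{\inte}$ to $G_\alpha^{\inte}$ (up to explicit factors) whenever $s_i\cdot\alpha$ precedes $\alpha$ in the appropriate Bruhat-type order; and (ii) a cyclic raising operator $\Phi$ that produces $G_\alpha^{\inte}$ from some $G_{\alpha'}^{\inte}$ with $|\alpha'|=|\alpha|-1$ by incrementing the last coordinate and cyclically permuting. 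The key verification is that, in the integral normalization, each move acts on non-symmetric monomial coefficients by multiplication by a non-negative-integer polynomial in $\tau$, while the $(-1)^{|\alpha|-|\beta|}$ sign alternation is preserved. Both checks reduce to explicit formulas for the action of Cherednik-type operators on non-symmetric interpolation polynomials, combined with identities relating the hooklength factors $c_\lambda(s;\tau)$ and $c_\lambda'(s;\tau)$ across the recursion.

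Finally, symmetrize: apply $\sum_{w\in S_n}w$ (with the standard stabilizer normalization) to $G_\lambda^{\inte}$, which yields $h_\lambda^{\inte}$ and sends each non-symmetric monomial $x^\beta$ to a non-negative-integer multiple of $m_\mu$, where $\mu$ is the partition sorting $\beta$. Collecting the contributions, the coefficient of $m_\mu$ in $h_\lambda^{\inte}$ becomes a non-negative-integer combination of the non-symmetric coefficients $(-1)^{|\lambda|-|\mu|}\mathsf{a}^{\mathrm{ns}}_{\lambda\beta}(\tau)$ over $\beta\in S_n\cdot\mu$; the uniform sign $(-1)^{|\lambda|-|\mu|}$ factors out, and the required membership $\mathsf{a}_{\lambda\mu}(\tau)\in\mathbb Z_{\geqslant0}[\tau]=\mathbb I^+$ follows. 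The main obstacle is the inductive step of the non-symmetric argument: the cyclic raising move $\Phi$ couples the hooklength data of $\alpha$ with the recursion in a subtle way, and verifying that the integral normalization absorbs all denominators while preserving both integrality and the alternating-sign positivity requires delicate combinatorial bookkeeping, analogous to (but technically more involved than) the homogeneous case treated in \cite{KSinv}.
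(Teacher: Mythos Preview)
The paper does not contain its own proof of this statement: it is quoted verbatim as a known result, with the attribution ``a similar conjecture is made in \cite{KS96} and proved in \cite{NSS23}.'' So there is no in-paper argument to compare against; the theorem functions here purely as background motivating the authors' integrality conjectures for binomial coefficients (\cref{conj:int-J,conj:int-M}).

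That said, your outline is precisely the shape of the argument one expects in \cite{NSS23}, and it mirrors the structure of \cite{KSinv} in the homogeneous case: pass to the non-symmetric interpolation polynomials of Knop \cite{Knop97}, establish the alternating-sign monomial positivity there by induction along the Cherednik-type intertwiner recursion (simple reflections plus the affine/cyclic raising step), and then symmetrize. Two small cautions on your sketch: first, the symmetrization relating $G_\lambda^{\inte}$ to $h_\lambda^{\inte}$ needs the correct choice of orbit representative and the correct matching of the non-symmetric and symmetric integral normalizations (the factors $c_\alpha^{\mathrm{ns}}$ versus $c_\lambda$), which is where stray denominators can appear if one is careless; second, as you yourself flag, the genuinely delicate point is checking that the affine step $\Phi$---which in the interpolation setting involves an inhomogeneous shift, not merely multiplication by a variable as in the homogeneous Jack case---still preserves the $(-1)^{|\alpha|-|\beta|}$-twisted $\mathbb Z_{\geqslant0}[\tau]$-positivity of monomial coefficients. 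This is exactly the new content beyond \cite{KSinv}, and your proposal correctly identifies it as the crux without actually carrying it out.
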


Now, consider the binomial coefficients.
Define \textbf{integral binomial coefficients} $B_{\lambda\mu}$ and \textbf{integral adjacent binomial coefficients} $A_{\lambda\mu}$ as follows:
\begin{align}
	B_{\lambda\mu}	\coloneqq	h_\mu^{\inte}(\overline\lambda) = c_\mu H(\mu) b_{\lambda\mu},\quad A_{\lambda\mu}	\coloneqq	\begin{dcases}
		B_{\lambda\mu}, &\lambda \cover \mu;\\
		0,	&\text{otherwise}.
	\end{dcases}
\end{align}
We naturally hope that the integral binomial coefficients $B_{\lambda\mu}$ have certain integrality and positivity. 
The adjacent ones can be easily seen to be integral and positive.
\begin{theorem}[Part of \cref{thm:a-int}, Integrality and Positivity]
	For the families $\mathcal F=\AJ$ and $\BJ$, if $\lambda\cover\mu$, then the integral adjacent binomial coefficient $A_{\lambda\mu}$ is a polynomial with non-negative integer coefficients in the parameter(s), i.e., lies in $\mathbb I^+$.
\end{theorem}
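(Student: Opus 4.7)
The plan is to unwind the definition $A_{\lambda\mu} = c_\mu H(\mu) a_{\lambda\mu}$ (valid because $b_{\lambda\mu} = a_{\lambda\mu}$ when $\lambda \cover \mu$) and apply the explicit product formulas from \cref{prop:H,prop:adj-pos}. Since $a_{\lambda\mu}$ is a ratio whose denominator involves $c_\mu(s)$ and $c_\mu'(s)$ (and, for $\mathcal F = \BJ$, also $d_\mu(s)$) indexed by $s \in C \cup R$, while $c_\mu H(\mu)$ is a product of precisely those same factor types taken over all $s \in \mu$, the denominators cancel completely and leave a product of factors each lying in $\mathbb I^+$.

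In detail, for $\mathcal F = \AJ$ the cancellation yields
\[
	A_{\lambda\mu} = \prod_{s \in \mu \setminus C} c_\mu(s;\tau) \prod_{s \in \mu \setminus R} c_\mu'(s;\tau) \prod_{s \in C} c_\lambda(s;\tau) \prod_{s \in R} c_\lambda'(s;\tau),
\]
and each factor is of the form $a + b\tau$ or $a + b\tau + 1$ with $a, b \in \mathbb Z_{\geqslant 0}$ by the defining formulas for $c$ and $c'$, hence lies in $\mathbb I^+ = \mathbb Z_{\geqslant 0}[\tau]$. For $\mathcal F = \BJ$ the analogous telescoping gives
\[
	A_{\lambda\mu} = \prod_{s \in \mu \setminus C} c_\mu(s) \prod_{s \in \mu \setminus R} c_\mu'(s) \prod_{s \in \mu \setminus (C \cup R)} d_\mu(s) \prod_{s \in C} c_\lambda(s) \prod_{s \in R} c_\lambda'(s) \prod_{s \in C \cup R} d_\lambda(s).
\]

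The $c$ and $c'$ factors lie manifestly in $\mathbb Z_{\geqslant 0}[\tau, \alpha]$; the one point requiring verification is that the $\tau$-coefficient of
\[
	d_\nu(s;\tau,\alpha) = a_\nu(s) + 2 a_\nu'(s) + 1 + \bigl(2n - l_\nu(s) - 2 l_\nu'(s) - 2\bigr)\tau + 2\alpha
\]
is non-negative. This follows at once from the column-length bound $l_\nu(s) + l_\nu'(s) + 1 \leqslant n$, which gives $l_\nu(s) + 2 l_\nu'(s) \leqslant 2n - 2$. Thus $d_\nu(s) \in \mathbb I^+$ as well, and the entire product lies in $\mathbb I^+$.

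The proof is essentially bookkeeping once the product formulas of \cref{prop:H,prop:adj-pos} are in hand, so I do not anticipate any serious obstacle: the combinatorial subsets $C, R \subseteq \mu$ introduced in \cref{prop:adj-pos} are exactly what is needed to align the numerator of $a_{\lambda\mu}$ with the complement of $C \cup R$ in $\mu$ inside $c_\mu H(\mu)$. The only mild step is the column-length inequality used to handle $d_\nu(s)$, and the integrality of individual linear factors does the rest.
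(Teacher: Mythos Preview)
Your proof is correct and follows essentially the same approach as the paper: start from $A_{\lambda\mu}=c_\mu H(\mu)a_{\lambda\mu}$, plug in the product formulas of \cref{prop:H,prop:adj-pos}, and cancel the $c_\mu(s)$, $c_\mu'(s)$ (and $d_\mu(s)$) factors over $C$ and $R$ to leave a denominator-free product of hook-type factors in $\mathbb I^+$. Your explicit treatment of the $\BJ$ case, including the column-length inequality $l_\nu(s)+l_\nu'(s)+1\leqslant n$ needed for $d_\nu(s)\in\mathbb I^+$, is exactly the argument the paper invokes (the paper states it earlier in the proof of \cref{prop:adj-pos} and then writes ``similarly for $\mathcal F=\BJ$'').
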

\begin{proof}
	By \cref{prop:H,prop:adj-pos} and the definition $A_{\lambda\mu}=c_\mu H(\mu) a_{\lambda\mu}$, we see that for $\mathcal F=\AJ$, 
	\begin{align*}
		A_{\lambda\mu} 
		&= \prod_{s\in\mu}c_\mu(s)c_\mu'(s) \prod_{s\in C}\frac{c_\lambda(s)}{c_\mu(s)} \prod_{s\in R}\frac{c_\lambda'(s)}{c_\mu'(s)}	\\
		&= \prod_{s\in\mu\setminus (C\cup R)}c_\mu(s)c_\mu'(s) \prod_{s\in C}c_\mu'(s)c_\lambda(s) \prod_{s\in R}c_\mu(s)c_\lambda'(s)\in \mathbb I^+,
	\end{align*}
	and similarly for $\mathcal F=\BJ$.
\end{proof}
For binomial coefficients in general, however, this is still an open problem.
It does not follow from the weighted sum formula, as the weights are not integral.
\begin{conjecture}[Integrality and Positivity]\label{conj:int-J}
	For the families $\mathcal F=\AJ$ and $\BJ$, if $\lambda\supseteq\mu$, then the integral binomial coefficient $B_{\lambda\mu}$ is a polynomial with non-negative integral coefficients in the parameter(s), i.e., lies in $\mathbb I^+$.
\end{conjecture}

As explained in \cite[Section 5]{NSS23}, the integrality of the expansion coefficients and that of the binomial coefficients seem to be independent: one does \textit{not} imply the other.

\subsubsection{Macdonald Polynomials}
In the case of Macdonald polynomials, many expressions contain factors of the form $1-q^at^b$, with $a,b\in\mathbb Z_{\geqslant0}$, making the sense of positivity-integrality not so clear.
Inspired by a recent paper \cite[Section 5.1]{DD24}, we consider the following re-parametrization\footnote{A different parametrization is used in \cite{DD24}; we define it this way to match our Jack parameter $\tau$.} of Macdonald and interpolation Macdonald polynomials:
\begin{align}
	\begin{cases}
		q=1+\gamma\\
		t=1+\gamma\tau\\
		a=1+\gamma\alpha
	\end{cases}	\longleftrightarrow
	\begin{cases}
		\gamma=q-1\\
		\tau=\frac{t-1}{q-1}\\
		\alpha=\frac{a-1}{q-1}
	\end{cases}.
\end{align}
Then the base field $\mathbb Q(q,t)$ (resp., $\mathbb Q(q,t,a)$) is isomorphic to $\mathbb Q(\gamma,\tau)$ (resp., $\mathbb Q(\gamma,\tau,\alpha)$). 
Under this parametrization, we then define the following:
\begin{align}\label{eqn:I-M}
	\mathbb I = \begin{dcases}
		\mathbb Z[\gamma,\tau],	&\mathcal F=\mathrm{M},~\AM;\\
		\mathbb Z[\gamma,\tau,\alpha],	&\mathcal F=\BM,
	\end{dcases}
	\quad\mathbb I^+ = \begin{dcases}
		\mathbb Z_{\geqslant0}[\gamma,\tau],	&\mathcal F=\mathrm{M},~\AM;\\
		\mathbb Z_{\geqslant0}[\gamma,\tau,\alpha],	&\mathcal F=\BM.
	\end{dcases}
\end{align}
Note that factors of the form $-(1-q^mt^na^l)$ are now in $\mathbb I^+$ where $m,n,l\geqslant0$.
Abuse notation and let 
\begin{align}
	J_\lambda(x;\gamma,\tau)\coloneqq J_\lambda(x;q=1+\gamma,t=1+\gamma\tau)  
\end{align}
be the integral Macdonald polynomial after the re-parametrization and similarly for the integral interpolation Macdonald polynomials.

As noted in \cite[Proposition~5.1]{DD24}, \cite[Proposition~8.1]{HHL05} implies the following:
\begin{theorem*}
	The expansion coefficient $u_{\lambda\mu}(\gamma,\tau)$ defined by 
	\begin{align}
		J_\lambda(x;\gamma,\tau) = \sum_\mu u_{\lambda\mu}(\gamma,\tau) m_\mu(x)
	\end{align}
	is a polynomial in $\gamma$ and $\tau$ with non-negative integral coefficients, i.e., lies in $\mathbb I^+$.
\end{theorem*}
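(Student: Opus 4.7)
The plan is to invoke the combinatorial formula for the integral Macdonald polynomial $J_\lambda(x;q,t)$ due to Haglund--Haiman--Loehr, and then to track how each of its factors transforms under the reparametrization $q=1+\gamma$, $t=1+\gamma\tau$. Concretely, \cite[Proposition~8.1]{HHL05} expresses $J_\lambda(x;q,t)$ as a sum over fillings $\sigma:\lambda\to\mathbb Z_{>0}$ of the form
\begin{equation*}
J_\lambda(x;q,t) = \sum_\sigma x^\sigma\cdot w_\sigma(q,t),
\end{equation*}
where the weight $w_\sigma(q,t)$ is an explicit product of factors of two types: monomial powers $q^a t^b$ with $a,b\geqslant0$ coming from the $\mathrm{inv}$ and $\mathrm{maj}$ statistics of $\sigma$, and factors of the shape $\pm(1-q^c t^d)$ with $c,d\geqslant0$ and $(c,d)\neq(0,0)$ coming from the descent or attack structure of $\sigma$. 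Extracting the coefficient $u_{\lambda\mu}(\gamma,\tau)$ amounts to summing those $w_\sigma$ for which $\sigma$ has content $\mu$.

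Next I would verify that, after the substitution $q=1+\gamma$, $t=1+\gamma\tau$, every such factor lies in $\mathbb I^+=\mathbb Z_{\geqslant0}[\gamma,\tau]$. For the monomial factors, the binomial expansions
\begin{equation*}
(1+\gamma)^a=\sum_{i\geqslant0}\binom{a}{i}\gamma^i,\qquad (1+\gamma\tau)^b=\sum_{j\geqslant0}\binom{b}{j}\gamma^j\tau^j
\end{equation*}
immediately show $q^a t^b\in\mathbb I^+$. For a factor $(1-q^c t^d)$ with $(c,d)\neq(0,0)$, a similar expansion gives
\begin{equation*}
-(1-q^c t^d) = (1+\gamma)^c(1+\gamma\tau)^d-1 = \sum_{(i,j)\neq(0,0)}\binom{c}{i}\binom{d}{j}\gamma^{i+j}\tau^j,
\end{equation*}
which lies in $\gamma\cdot\mathbb I^+\subseteq\mathbb I^+$. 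Hence, provided the sign prescribed by HHL for each $(1-q^c t^d)$ factor matches this sign convention, every weight $w_\sigma(\gamma,\tau)$ lies in $\mathbb I^+$, and the sum over fillings of content $\mu$ gives $u_{\lambda\mu}(\gamma,\tau)\in\mathbb I^+$.

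The main obstacle is precisely this sign bookkeeping in the HHL formula: one must verify that the overall sign of $w_\sigma$ equals $(-1)^N$ where $N$ is the number of $(1-q^c t^d)$ factors appearing in $w_\sigma$, so that each such factor becomes an element of $\mathbb I^+$ after reparametrization. Once this is confirmed --- as is essentially the content of \cite[Proposition~5.1]{DD24} --- the theorem follows at once. I would also remark that the same reparametrization idea feeds back into the present paper: combined with the explicit product formulas of \cref{prop:H,prop:adj-pos}, it explains why the integral adjacent binomial coefficients $A_{\lambda\mu}$ for $\mathcal F=\AM,\BM$ belong to $\mathbb I^+$ (up to an explicit sign and powers of $q,t,a$), yielding the remaining half of \cref{thm:a-int}.
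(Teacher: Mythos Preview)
Your proposal is correct and matches the paper's own treatment: the paper does not give an independent proof of this theorem but simply records it as a consequence of \cite[Proposition~8.1]{HHL05}, as observed in \cite[Proposition~5.1]{DD24}. You have correctly identified both references and sketched the mechanism (the HHL formula plus the observation that $q^at^b$ and $-(1-q^ct^d)$ land in $\mathbb Z_{\geqslant0}[\gamma,\tau]$ under the substitution), which is exactly the content of those citations.
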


For integral binomial coefficients of families $\AM$ and $\BM$, we have the following:
\begin{theorem}[Part of \cref{thm:a-int}, Integrality and Positivity]\label{thm:a-int-M}
	For the families $\mathcal F=\AM$ and $\BM$, if $\lambda\cover\mu$, then the integral adjacent binomial coefficient $A_{\lambda\mu}$, in the parametrization $(\gamma,\tau,\alpha)$, up to some sign and powers of $q=1+\gamma$, $t=1+\gamma\tau$ and $a=1+\gamma\alpha$, is a polynomial with non-negative integer coefficients in the parameters, i.e., lies in $\mathbb I^+$.
\end{theorem}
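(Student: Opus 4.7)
The plan is to adapt the short computation already used for the $\AJ,\BJ$ cases: expand $A_{\lambda\mu}=c_\mu H(\mu)\,a_{\lambda\mu}$ using the explicit formulas from \cref{prop:H}(3)--(4) and \cref{prop:adj-pos}(3)--(4), and then observe that after the reparametrization $q=1+\gamma$, $t=1+\gamma\tau$, $a=1+\gamma\alpha$ each remaining factor becomes a non-negative-coefficient multiple of $-\gamma$.

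Concretely, the denominators $c_\mu(s),c_\mu'(s)$ (and $d_\mu(s)$ in the $\BM$ case) of $a_{\lambda\mu}$ indexed by $s\in C\cup R$ cancel precisely with the matching factors inside $c_\mu H(\mu)$, leaving
\[
A_{\lambda\mu}\;=\;\epsilon\,q^{i}t^{j}a^{k}\!\!\prod_{s\in\mu\setminus(C\cup R)}\!\!c_\mu(s)c_\mu'(s)\!\!\prod_{s\in C}\!c_\mu'(s)c_\lambda(s)\!\!\prod_{s\in R}\!c_\mu(s)c_\lambda'(s)
\]
in the $\AM$ case, and the analogous expression with additional $d_?(s)$ factors in the $\BM$ case, with $\epsilon\in\{\pm1\}$ and $i,j,k\in\mathbb Z$ absorbing the sign and explicit monomial prefactors coming from $H(\mu)$ and the $1/t^{i_0-1}$, $1/q^{j_0-1}$ in $a_{\lambda\mu}$. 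What survives is a clean product of $2|\mu|$ (for $\AM$) or $3|\mu|$ (for $\BM$) factors, each of the form $1-q^{m}t^{n}a^{\ell}$ with $m,n,\ell\geq 0$. The only non-trivial point is the non-negativity of the $t$-exponent $2n-(l(s)+2l'(s)+2)$ inside the $d$-factors, which is exactly the inequality $l(s)+l'(s)+1\leq n$ already invoked in \cref{prop:adj-pos}.

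Finally, the reparametrization does the work: for any $m,n,\ell\geq 0$,
\[
1-(1+\gamma)^m(1+\gamma\tau)^n(1+\gamma\alpha)^\ell \;=\; -\gamma\cdot P(\gamma,\tau,\alpha), \qquad P\in\mathbb Z_{\geq 0}[\gamma,\tau,\alpha],
\]
since the left-hand side vanishes at $\gamma=0$ and the coefficients of its $(-1)\cdot$ expansion are products of ordinary binomial coefficients of $m,n,\ell$. Multiplying the $2|\mu|$ or $3|\mu|$ such factors together yields $\gamma^{2|\mu|}Q$ or $\gamma^{3|\mu|}Q$ with $Q\in\mathbb I^+$, and combining with the prefactor $\epsilon\,q^{i}t^{j}a^{k}$ gives exactly the form claimed in the theorem. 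The main ``obstacle'' is purely bookkeeping, namely verifying the cancellations, the sign $\epsilon$, and the collection of exponents; there is no new conceptual ingredient beyond the $\AJ,\BJ$ argument and the observation that the reparametrization is the precise device that converts factors $1-q^m t^n a^\ell$ into $\mathbb I^+$-positive quantities after extracting $\gamma$ and a monomial in $q,t,a$.
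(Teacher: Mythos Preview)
Your proposal is correct and follows essentially the same route as the paper: expand $A_{\lambda\mu}=c_\mu H(\mu)a_{\lambda\mu}$ via \cref{prop:H} and \cref{prop:adj-pos}, cancel the $c_\mu(s),c_\mu'(s)$ (and $d_\mu(s)$) denominators over $C\cup R$, and observe that what remains is a monomial in $q,t,a$ times a product of factors $1-q^{m}t^{n}a^{\ell}$ with $m,n,\ell\geqslant0$, each of which the reparametrization sends into $-\gamma\cdot\mathbb I^+$. The paper's proof is in fact terser, relying on the remark preceding the theorem that $-(1-q^{m}t^{n}a^{\ell})\in\mathbb I^+$; your explicit extraction of $\gamma$ is harmless but unnecessary, since $\gamma\in\mathbb I^+$ already.
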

\begin{proof}
	Again, by \cref{prop:H,prop:adj-pos} and the definition $A_{\lambda\mu}=c_\mu H(\mu) a_{\lambda\mu}$, we see that for $\mathcal F=\AJ$, 
	\begin{align*}
		A_{\lambda\mu} 
		&= (-1)^{|\mu|}q^{n(\mu')}t^{(n-1)|\mu|-2n(\mu)-i_0+1}\cdot 
		\prod_{s\in\mu}c_\mu(s)c_\mu'(s) \prod_{s\in C}\frac{c_\lambda(s)}{c_\mu(s)} \prod_{s\in R}\frac{c_\lambda'(s)}{c_\mu'(s)}	\\
		&= (-1)^{|\mu|}q^{n(\mu')}t^{(n-1)|\mu|-2n(\mu)-i_0+1}\\
		&\=	\cdot \prod_{s\in\mu\setminus (C\cup R)}c_\mu(s)c_\mu'(s) \prod_{s\in C}c_\mu'(s)c_\lambda(s) \prod_{s\in R}c_\mu(s)c_\lambda'(s)\in \mathbb I^+.
	\end{align*}
	For $\BM$, it is similar.
\end{proof}
For example, let $n=2$ and $\lambda=(2,2)$, $\mu=(2,1)$. 
By \cref{prop:H,prop:adj-pos}, we have for $\AM$,
\begin{align*}
	a_{\lambda\mu} &= \frac{1}{t}\frac{1-t^2}{1-t}\frac{1-q^2}{1-q}	\\
	A_{\lambda\mu} &= -qt(1-q)^2(1-q^2t)\cdot (1-t)^2(1-qt^2)\cdot a_{\lambda\mu} 
	\\&=	-q(1-q)(1-t)(1-q^2)(1-t^2)(1-q^2t)(1-qt^2)
\end{align*}
then up to a minus sign, $A_{\lambda\mu}\in\mathbb I^+$.
For $\BM$, we have
\begin{align*}
	a_{\lambda\mu}	&=	\frac1q \frac{(1-t^2)(1-q^3ta^2)}{(1-t)(1-q^3t^2a^2)} \frac{(1-q^2)(1-q^2a^2)}{(1-q)(1-qa^2)}	\\
	A_{\lambda\mu} &= \frac{1}{q^5t^2a^3} (1-q^2ta^2)(1-q^2t)(1-q^3t^2a^2 )(1-q)^2(1-qa^2)\cdot (1-t)^2(1-qt^2)\cdot a_{\lambda\mu} 
	\\&=	\frac{1}{q^6t^2a^3} (1-q)(1-t)(1-q^2)(1-t^2)(1-q^2t)(1-qt^2)(1-q^2a^2)
	(1-q^2ta^2)(1-q^3ta^2),
\end{align*}
so up to a minus sign (as there are 9 factors in the form $1-q^mt^na^l$) and some powers of $q$, $t$ and $a$, we have $A_{\lambda\mu}\in\mathbb I^+$.

\begin{conjecture}[Integrality and Positivity]\label{conj:int-M}
	For the families $\mathcal F=\AM$ and $\BM$, if $\lambda\supseteq\mu$, then the integral binomial coefficient $B_{\lambda\mu}$ lies in $\mathbb I^+$ in the sense of \cref{thm:a-int-M}.
\end{conjecture}

\subsection{Double Schur Polynomials and Molev's Work}\label{sec:app-molev}
Double Schur polynomials are certain generalizations of factorial Schur polynomials or shifted Schur polynomials \cite{OO-schur}, with the parameter being an infinite sequence $a=(a_i)_{i\in\mathbb Z}$. 
See, for example, \cite[Section 1]{Molev} for an introduction.

Let $\lambda$ be a partition of length at most $n$.
Double Schur polynomials of $n$ variables can be defined using the following combinatorial formula:
\begin{align}
	s_\lambda(x\|a) = \sum_T\prod_{s\in \lambda}(x_{T(s)}-a_{T(s)-c_\lambda(s)}),
\end{align}
where $T$ runs over reverse tableaux of shape $\lambda$ and with entries in $[n]$ and  $c_\lambda(s)=a'_\lambda(s)-l'_\lambda(s)=j-i$ is the content of $s=(i,j)$.

Double Schur polynomials and interpolation Jack polynomials \emph{intersect} at one case, namely, the factorial Schur polynomials: for double Schur polynomials, let $a_i=-i$ for all $i$, and for interpolation Jack polynomials, let $\tau=1$. 

Molev \cite{Molev} studied the Littlewood--Richardson coefficients for double Schur polynomials. Let us recall the following notions (in our notation).

Assume $\lambda\supseteq\mu$ and $\bm\xi=(\bm\xi_0,\dots,\bm\xi_k)\in\mathfrak C_{\lambda\mu}$ is a saturated chain.
Let $r_i$ denote the row number of $\bm\xi_{k-i}/\bm\xi_{k-i+1}$, for $i=1,\dots,k=|\lambda|-|\mu|$.
The \textbf{Yamanouchi symbol} of $\bm\xi$ is the sequence $r_1\cdots r_k$.
For example, $(3,2)\cover(2,2)\cover(2,1)$ is a chain from $(3,2)$ to $(2,1)$, and its Yamanouchi symbol is $r_1r_2=21$.

Given any chain $\bm\xi\in\mathfrak C_{\lambda\mu}$, a \textbf{barred tableau} of type $(\bm\xi,\nu)$ is defined as follows:
consider a reverse tableau $T$ of shape $\nu$ with entries in $[n]$ and barred boxes $s_1<_C\dots<_Cs_k$, such that $T(s_i)=r_i$, for $1\leqslant i\leqslant k$, where the total order $s<_Cs'$ is defined by 
\begin{align}
	(i,j)<_C(i',j')	\iff j<j'	\text{ or }	j=j', i>i'.
\end{align}
For example, for $\lambda=(4,3,1)$, $\mu=(3,1)$ and $\bm\xi=(4,3,1)\cover(3,3,1)\cover(3,2,1)\cover(3,2)\cover(3,1)$, the Yamanouchi symbol is 2321. For $\nu=(5,5,3)$, the following is a barred tableau:
\begin{align*}
	\begin{ytableau}
		5&5&4&\overline 2&2\\
		4&\overline3&2&1&\overline1	\\
		\overline2&1&1
	\end{ytableau}
\end{align*}
We say a tableau is \textbf{$\lambda$-bounded} if the first row of the tableau (viewed as a partition) is contained in the conjugate of $\lambda$.
The example above is not $\lambda$-bounded, since its first row $(5,5,4,2,2)$ is not contained in $\lambda'=(3,2,2,1)$. 
A \textbf{Molev tableau} of type $(\lambda,\mu,\nu)$ is a $\lambda$-bounded barred tableau of type $(\bm\xi,\nu)$, for some $\bm\xi\in\mathfrak C_{\lambda\mu}$.
\cite[Example~2.3]{Molev} gives all Molev tableau of type $(\lambda,\mu,\nu)$, where $\lambda=(5,2,2)$, $\mu=(2,2)$ and $\nu=(4,2,1)$. 

The Littlewood--Richardson coefficients for the double Schur polynomials are defined by the usual expansion:
\begin{align}
	s_\mu(x\|a) s_\nu(x\|a) = \sum_\lambda c_{\mu\nu}^{\lambda,\mathrm{DS}}(a) s_\lambda(x\|a).
\end{align}

\cite[Theorem~2.1]{Molev} gives a combinatorial formula for the Littlewood--Richardson coefficient $c_{\mu\nu}^{\lambda,\mathrm{DS}}$, summing over all Molev tableaux, and each summand is positive in the sense of \cite{Graham}.
In particular, his result implies the following:
\begin{theorem*}
	Let $\mu,\nu$ be partitions of length at most $n$.
	The set
	\begin{align}
		S_{\mu\nu}^{\mathrm{DS}} \coloneqq \Set{\lambda}{c_{\mu\nu}^{\lambda,\mathrm{DS}}\neq0} 
	\end{align}
	is equal to the following set 
	\begin{align}
		M_{\mu\nu} \coloneqq \Set{\lambda\supseteq\mu,\nu}{\text{there exists a Molev tableau of type } (\lambda,\mu,\nu)}.
	\end{align}
\end{theorem*}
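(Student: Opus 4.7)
The plan is to derive this set equality as a direct consequence of Molev's combinatorial formula (the first assertion of his Theorem 2.1), which expresses $c_{\mu\nu}^{\lambda,\mathrm{DS}}(a)$ as a sum over Molev tableaux of type $(\lambda,\mu,\nu)$ with Graham-positive weights. Since the authors have already laid out the machinery needed to read this formula, the argument reduces to verifying that the right kind of cancellation cannot occur.

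First, I would invoke Molev's formula to write $c_{\mu\nu}^{\lambda,\mathrm{DS}}(a) = \sum_T w_T(a)$, where $T$ ranges over Molev tableaux of type $(\lambda,\mu,\nu)$ and each weight $w_T(a)$ is a polynomial in the indeterminates $(a_i)_{i \in \mathbb Z}$. The inclusion $S_{\mu\nu}^{\mathrm{DS}} \subseteq M_{\mu\nu}$ is then immediate: if $M_{\mu\nu}$ contains no such $\lambda$, the sum is empty and the coefficient vanishes. (The additional requirement $\lambda \supseteq \mu,\nu$ built into $M_{\mu\nu}$ is forced anyway, since otherwise no barred tableau of shape $\nu$ bounded by $\lambda'$ can exist.)

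The reverse inclusion $M_{\mu\nu} \subseteq S_{\mu\nu}^{\mathrm{DS}}$ follows from Graham positivity \cite{Graham}: each $w_T(a)$ is a non-negative integer combination of products of positive roots $a_i - a_j$ with $i < j$. Since the $a_i$ are algebraically independent and each such product is a nonzero polynomial, Graham-positive summands cannot cancel against one another. Thus $\sum_T w_T(a) \neq 0$ as soon as at least one Molev tableau exists, so every $\lambda \in M_{\mu\nu}$ lies in $S_{\mu\nu}^{\mathrm{DS}}$.

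The main obstacle, if one sought to reprove this result using the techniques developed in the present paper rather than appealing to Molev, would be to adapt the weighted sum formula for LR coefficients (\cref{thm:D1}) to the double Schur setting. The chain $\bm\zeta \in \mathfrak C_{\lambda\mu}$ in our formula parallels the Yamanouchi chain underlying Molev tableaux, and the $\lambda$-bounded barred portion of $T$ corresponds in spirit to the binomial factor $b_{\bm\zeta_j \nu}$ in the weight $\wt_\nu^{\LR}(\bm\zeta)$. However, double Schur polynomials and interpolation Jack polynomials use different parametrizations and vanishing loci—they coincide only at the factorial Schur specialization ($\tau = 1$, $a_i = -i$)—so establishing a Pieri rule and recursion analogous to \cref{lem:Pieri} and \cref{thm:LR-rec} directly in Molev's framework would constitute the essential new work, and the non-cancellation of the resulting weights would still ultimately reduce to a Graham-positivity argument.
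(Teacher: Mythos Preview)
Your proposal is correct and matches the paper's approach exactly: the paper does not give its own proof of this theorem but simply states that it follows from \cite[Theorem~2.1]{Molev}, whose formula writes $c_{\mu\nu}^{\lambda,\mathrm{DS}}$ as a sum over Molev tableaux with each summand positive in the sense of \cite{Graham}. You have correctly unpacked the two inclusions that this entails---empty sum gives vanishing, and Graham-positive nonzero summands cannot cancel---which is precisely the content the paper leaves implicit.
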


Our \cref{thm:aLR-positivity} shows that adjacent LR coefficients are positive. 
We conjecture the following:

\begin{conjecture}[Positivity Conjecture for LR Coefficients]\label{conj:LR-positivity}
	For each family of interpolation polynomials, $\AJ,~\BJ,~\AM$ and $\BM$, the Littlewood--Richardson coefficient $c_{\mu\nu}^\lambda$ lies in $\fp$ in general.
\end{conjecture}
\begin{conjecture}\label{conj:LR-S}
	Fix $\mu$ and $\nu$.
	The sets
	\begin{align}
		S_{\mu\nu}^{\mathcal F} \coloneqq \Set{\lambda}{c_{\mu\nu}^{\lambda,\mathcal F}\neq0} \end{align}
	for $\mathcal F=\AJ,~\BJ,~\AM,$ and $\BM$, are all equal to the set $M_{\mu\nu}$.
\end{conjecture}

As an application of \cref{thm:b-positivity,thm:D,thm:aLR-positivity}, we show that \cref{conj:LR-S} holds at the bottom.
\begin{theorem} For $\mu,\nu\in\mathcal P_n$ and any family $\mathcal F=\AJ,\AM,\BJ,\BM$.
	\begin{align}
		S_{\mu\nu}^{\mathcal F}\cap \mathcal P_n^{|\mu|+1}=\Set*{\lambda\in\mathcal P_n^{|\mu|+1}}{\lambda\supseteq\mu,\nu} = M_{\mu\nu}\cap \mathcal P_n^{|\mu|+1}.
	\end{align}
\end{theorem}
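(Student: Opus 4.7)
The plan is to prove the double equality by sandwiching: I would show $S_{\mu\nu}^{\mathcal F}\cap \mathcal P_n^{|\mu|+1}=E=M_{\mu\nu}\cap \mathcal P_n^{|\mu|+1}$, where $E\coloneqq\{\lambda\in\mathcal P_n^{|\mu|+1}:\lambda\supseteq\mu\text{ and }\lambda\supseteq\nu\}$ is the ``containment set'' in the middle of the theorem. The key preliminary observation is that any $\lambda\in E$ satisfies $|\lambda|\in\{|\mu|,|\mu|+1\}$, so either $\lambda=\mu$ or $\lambda\cover\mu$; this reduces every verification below to two cases. I shall also work under the implicit hypothesis $\nu\ne\bm0$, consistent with the always-valid upper bound $|\lambda|\le|\mu|+|\nu|$ for nonzero LR coefficients.

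For the equality $S_{\mu\nu}^{\mathcal F}\cap\mathcal P_n^{|\mu|+1}=E$, the inclusion $\subseteq$ is the extra vanishing statement of \cref{thm:LR-rec}(2): a nonzero $c_{\mu\nu}^\lambda$ forces $\lambda\supseteq\mu,\nu$. For the reverse inclusion I exploit the two cases. When $\lambda=\mu$, the base case (i) of the recursion in \cref{thm:LR-rec}(1) gives $c_{\mu\nu}^\mu=b_{\mu\nu}$, which lies in $\fpp$ by \cref{thm:b-positivity} applied to $\mu\supseteq\nu$. When $\lambda\cover\mu$, \cref{cor:abc} yields the identity $c_{\mu\nu}^\lambda=a_{\lambda\mu}(b_{\lambda\nu}-b_{\mu\nu})$; the factor $a_{\lambda\mu}$ is in $\fpp$ by \cref{prop:adj-pos}, while $b_{\lambda\nu}-b_{\mu\nu}$ is in $\fpp$ by \cref{thm:b-mono}, whose hypotheses $\lambda\supsetneq\mu$, $\lambda\supseteq\nu$, and $\nu\ne\bm0$ all hold by assumption. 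Thus $c_{\mu\nu}^\lambda\ne0$, giving $E\subseteq S_{\mu\nu}^{\mathcal F}$.

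For the equality $M_{\mu\nu}\cap\mathcal P_n^{|\mu|+1}=E$, the inclusion $\subseteq$ is purely combinatorial: a Molev tableau of type $(\lambda,\mu,\nu)$ requires a covering chain in $\mathfrak C_{\lambda\mu}$, forcing $\lambda\supseteq\mu$; and since the column entries of a reverse tableau strictly decrease from top to bottom, the top entry in column $j$ satisfies $T(1,j)\ge\nu_j'$, while $\lambda$-boundedness forces $T(1,j)\le\lambda_j'$, so $\nu_j'\le\lambda_j'$ for $1\le j\le\nu_1$, i.e.\ $\nu\subseteq\lambda$. For the reverse inclusion I would explicitly construct a Molev tableau for each $\lambda\in E$. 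If $\lambda=\mu$ the unique chain is $(\mu)$ with no bars, and the distinguished RT of shape $\nu$ from \cref{eqn:distinguishedRT}, whose top row is exactly $\nu'\subseteq\mu'=\lambda'$, works. If $\lambda\cover\mu$ differ in row $i_0$ (Yamanouchi symbol $r_1=i_0$), I split into sub-cases. When $\nu_{i_0}\ge1$, the distinguished RT already contains the entry $i_0$ at position $(i_0,1)$, and I bar that box. When $\nu_{i_0}=0$ (equivalently $i_0>\ell(\nu)$), I modify the distinguished RT by overwriting $T(1,1)=\ell(\nu)$ with $i_0$; this preserves strictness down column $1$ (since $i_0>\ell(\nu)-1$) and weak decrease along row $1$ (since $i_0>\nu_2'$), and $\lambda$-boundedness at the new cell reads $i_0\le\ell(\lambda)=\lambda_1'$, which holds because $\lambda_{i_0}=\mu_{i_0}+1\ge1$ forces $\ell(\lambda)\ge i_0$. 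Barring $(1,1)$ then completes the construction.

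The main obstacle is the sub-case $\nu_{i_0}=0$ on the Molev side, where the distinguished RT does not contain the required entry $i_0$; overcoming it relies on the observation $\ell(\lambda)\ge i_0$, itself a consequence of the single added box $\lambda\cover\mu$ lying in row $i_0$. Everything else is a direct invocation of the positivity and monotonicity theorems together with the identity of \cref{cor:abc}, presenting no further technical difficulty.
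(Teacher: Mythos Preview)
Your argument follows the paper's proof essentially verbatim: the same case split $\lambda=\mu$ versus $\lambda\cover\mu$, the same invocation of positivity/monotonicity (you unpack \cref{thm:aLR-positivity} into \cref{cor:abc} plus \cref{prop:adj-pos} plus \cref{thm:b-mono}, which is exactly how that theorem is proved), and the same construction of Molev tableaux via the distinguished RT, with the same sub-split according to whether $i_0\le\nu_1'$ or not. You also supply the combinatorial justification for $M_{\mu\nu}\subseteq\{\lambda:\lambda\supseteq\mu,\nu\}$ that the paper merely cites from Molev.

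There is one small slip to correct. In the sub-case $\nu_{i_0}\ge1$ you write that the distinguished RT of shape $\nu$ has entry $i_0$ \emph{at position $(i_0,1)$}. This is false: by \cref{eqn:distinguishedRT}, $T(i_0,1)=\nu_1'-i_0+1$, which equals $i_0$ only when $\nu_1'=2i_0-1$. What is true (and what the paper uses) is that the first column of the distinguished RT carries the entries $\nu_1',\nu_1'-1,\dots,1$, so the entry $i_0$ sits at position $(\nu_1'-i_0+1,\,1)$; barring \emph{that} box gives the desired Molev tableau. With this correction the proof goes through unchanged.
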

\begin{proof}
	By the weighted sum formula \cref{eqn:LR-wtsum} and the symmetry $c_{\mu\nu}^{\lambda}=c_{\nu\mu}^{\lambda}$, we have $c_{\mu\nu}^{\lambda}=0$ unless $\lambda\supseteq\mu,\nu$. 
	Now assume $\lambda\supseteq\mu,\nu$. 
	If $\lambda=\mu$, then $c_{\lambda\nu}^{\lambda}=b_{\lambda\nu}\in\fpp$ by \cref{thm:b-positivity}.
	If $\lambda\cover\mu$, then $c_{\mu\nu}^{\lambda}\in\fpp$ by \cref{thm:aLR-positivity}. This proves that first equality.
	
	As for the second equality, by definition, if $\lambda\not\supseteq\mu$ or if $\lambda\not\supseteq\nu$, then there is no Molev tableau (as mentioned in \cite[Page.~3455]{Molev}).
	Assume $\lambda\supseteq\mu,\nu$. 
	If $\lambda=\mu$, a barred tableau of shape $\nu$ is simply a usual RT and the first row of the distinguished RT (see \cref{sec:pre-partitions}) is equal to $\nu'\subseteq\lambda'$, hence the distinguished RT is $\lambda$-bounded, and so $\lambda\in M_{\mu\nu}$.
	If $\lambda\cover\mu$, let $r$ be the Yamanouchi symbol. If $r\leqslant\nu'_1$, then $r$ appears in the first column of the distinguished RT of shape $\nu$, and putting a bar this box gives a Molev tableau. 
	If instead $r>\nu'_1$, we can modify the distinguished RT by replacing $T(1,1)$ with $\overline r$. Then this modified tableau is a Molev tableau since $r\leqslant \lambda_1'$. 
\end{proof}

\subsection{The Non-Symmetric Case}\label{sec:app-nonsym}

Let us conclude the paper with the non-symmetric counterparts of interpolation polynomials.
Non-symmetric interpolation polynomials of family $\AM$ and $\BM$ are first studied in \cite{Knop97,Sahi96} and \cite{DKS21} respectively.
Such polynomials can also be defined by some interpolation condition and degree condition, similar to our \cref{eqn:def-vanishing,eqn:def-deg}, as such interpolation problems also satisfy certain existence and uniqueness theorem (see \cite[Proposition~3.3]{DKS21}).

Now, let $L$ be the index set of non-symmetric interpolation polynomials ($L=\mathbb Z_{\geqslant0}^n$ for $\AM$ and $L=\mathbb Z^n$ for $\BM$), and still denote by $h_u(x)$ the non-symmetric interpolation polynomials.
Assume $u,v\in L$ such that $|u|=|v|+1$, formally define a covering relation $u\cover v$ if $h_v(\bar u)\neq0$, and let $\supseteq$ be the partial order generated by it, i.e., $u\supseteq v$ if there exist $w^{(1)},\dots,w^{(k-1)}$ such that $u\cover w^{(1)}\cover\cdots\cover w^{(k-1)}\cover v$.
Then the weighted sum formulas \cref{eqn:b-wtsum,eqn:b-weight}, \cref{eqn:LR-wtsum,eqn:LR-weight}, \cref{eqn:LR-wtsum-p,eqn:LR-weight-p}, and the recursion formulas \cref{eqn:b-recursion,eqn:LR-recursion-p,eqn:LR-recursion} still hold if we replace the covering relation, the containment order, and the interpolation polynomials with their non-symmetric counterparts.

The crucial question is then to give a \emph{combinatorial} interpretation of the covering relation. For the family $\AM$, this is done in \cite[Section 4]{Knop97}; whereas for $\BM$, some computations and conjecture are made in \cite[Appendix]{DKS21}.
We shall address this matter further elsewhere.

\section*{Acknowledgements}
H.C.\ was partially supported by the Lebowitz Summer Research Fellowship and the SAS Fellowship at Rutgers University.
S.S.\ was partially supported by Simons Foundation grant 00006698.
\bibliographystyle{elsarticle-harv} 
\bibliography{interpolation} 

\end{document}